\newcommand*{\mailto}[1]{\href{mailto:#1}{\nolinkurl{#1}}}
\newcommand{\beq}{\begin{equation}}
\newcommand{\eeq}{\end{equation}}
\newcommand{\ba}{\begin{align}}
\newcommand{\ea}{\end{align}}
\renewcommand{\ln}{\text{\rm ln}}
\numberwithin{equation}{section}
\newtheorem{theorem}{Theorem}[section]
\newtheorem{lemma}[theorem]{Lemma}
\newtheorem{corollary}[theorem]{Corollary}
\theoremstyle{definition}
\newtheorem{remark}{Remark}[section]
\begin{document}


\title[Inverse problems for perturbed Bessel operator]
{The inverse eigenvalue problems for perturbed Bessel operator with mixed data}

\author[Z.~G.~Liu]{Zeguang Liu}
\address{School of Mathematics and Statistics, Northeast (Dongbei) Normal University, Changchun, 130024, Jilin, People's Republic of China}
\email{\mailto{liuzeguang205@nenu.edu.cn}}

\author[X.~J.~Xu]{Xin-Jian Xu}
\address{Department of Mathematics, School of Mathematics and Statistics, Nanjing University of
Science and Technology, Nanjing, 210094, Jiangsu, People's
Republic of China}
\email{\mailto{xxjxuxinjian@163.com}}

\subjclass[2020]{34A55,34B24,34B30,34L40,47A75}
\keywords{Perturbed Bessel operator, Radial Schr\"{o}dinger operator, Sturm-Liouville operator, Inverse spectral problem, Eigenvalue}
\date{\today}

\begin{abstract}
{We consider inverse eigenvalue problems for the perturbed Bessel operator in $L^{2}(0,1)$. (1) For the case where the angular-momentum quantum number $\ell\in\mathbb{N}\cup\{0\}$, we establish a uniqueness result for the inverse spectral problem by utilizing the closedness condition of a certain function system constructed based on the eigenvalues and the norming constants. (2) For the broader case where $\ell \geq -1/2$, we provide a uniqueness result for the inverse problem by using the density condition satisfied by the eigenvalues and the norming constants, where an additional smoothness condition may be imposed on the potential. (3) In the last section of this article, we present some corollaries based on (2). The results in these corollaries have already been established for the case $\ell=0$ by Gesztesy, Simon, Wei, Xu, Hatino\v{g}lu, et al., and we extend these results to the general case $\ell \geq -1/2$.}
\end{abstract}

\maketitle

\section{Introduction}

In this paper, we study the boundary value problem on the interval $(0,1)$:
\begin{align}\label{e1.1}
L(\ell, q)(f):=-f^{\prime \prime}(x)+\frac{\ell(\ell+1)}{x^{2}}f(x)+q(x)f(x)
=\lambda f(x),
\end{align}
subject to the boundary conditions
\begin{align}\label{e1.2}
\lim_{x \rightarrow 0}\frac{f(x)}{x^{\ell+1}}<\infty, ~~f^{\prime}(1)+\beta f(1)=0,
\end{align}
where $\lambda$ is the spectral parameter, $\ell \geq-1/2$ is the angular-momentum quantum number, 
$q$ is a real-valued potential function in $L^{1}(0,1)$, 
and $L(\ell, q)$ is the self-adjoint operator generated by $-D^2+\ell(\ell+1)x^{-2}+q$ in $L^{2}(0,1)$ with the boundary condition \eqref{e1.2}.
The parameter $\beta$ belongs to $\mathbb{R} \cup\{\infty\}$;  
if $\beta=\infty$, then the boundary condition $f^{\prime}(1)+\beta f(1)=0$ means the Dirichlet boundary condition $f(1)=0$. 

Given $q\in L^{1}(0, 1)$, the operator $L(\ell, q)$ has a simple discrete spectrum denoted by $\sigma(\ell, q, \beta)$, and the eigenvalue $\lambda_{\ell, \beta, n}(q)\in\sigma(\ell, q, \beta)$ has the following asymptotic behavior:
\begin{equation}\label{e1.3}
\sqrt{\lambda_{\ell, \infty, n}(q)}=\left(n+\frac{\ell}{2}\right) \pi+O(\textnormal{R}(n^{2})+n^{-1}), ~~n \in \mathbb{N},
\end{equation}
\begin{equation}\label{e1.4}
\sqrt{\lambda_{\ell, \beta, n}(q)}=\left(n+\frac{\ell-1}{2}\right) \pi+O(\textnormal{R}(n^{2})+n^{-1}), ~~\beta \in \mathbb{R}, 
~~n \in \mathbb{N},
\end{equation}
where
\begin{equation}\label{e1.7}
\textnormal{R}(\lambda):=\int_0^1 
\frac{y\tilde{q}(y)}{1+|\lambda|^{1/2}y}dy, \;\text{and}\; 
\tilde{q}:=
\begin{cases}
|q|,&\textnormal{if}~~\ell>-1/2,\\
|(1-\ln(x))q|,&\textnormal{if}~~\ell=-1/2,
\end{cases}
\end{equation}
see \cite[Theorem 2.5]{KST2010}.

Let $\psi_{\ell}(\lambda, x, q)$ be a solution of the equation \eqref{e1.1} under the initial conditions
\begin{align}\label{e1.8}
\begin{cases}
\psi_{\ell}(\lambda, 1, q)=1, \psi_{\ell}^{\prime}(\lambda, 1, q)
=-\beta, 
&\textnormal{if}~\beta \in \mathbb{R}, \\
\psi_{\ell}(\lambda, 1, q)
=0, \psi_{\ell}^{\prime}(\lambda, 1, q)=-1, 
&\textnormal{if}~\beta=\infty.
\end{cases}
\end{align}
For every fixed $x\in(0,1]$, the function $\psi_{\ell}(\lambda, x, q)$ is an entire function of $\lambda$. 
The norming constant corresponding to the eigenvalue $\lambda_{\ell, \beta, n}$ is given by
\begin{align*}
\zeta(\lambda_{\ell, \beta, n},\beta, q)=\int_0^1|\psi_{\ell}(\lambda_{\ell, \beta, n}, x, q)|^{2}dx.
\end{align*}

The perturbed Bessel operators $L(\ell, q)$ with integer or half-integer $\ell$ originate from the Schr\"{o}dinger operators $-\Delta+q$ acting in $L^2(\mathbb{R}^n)$ ($n\in\mathbb{N}$) with spherically symmetric potentials (see \cite[p.286]{Wei1987}). For general $\ell\geq-1/2$, the operators $L(\ell, q)$ appear, for example, in the study of zonal Schr\"{o}dinger operators on spheres and Laplace operators for a Riemannian manifold that is a hypersurface of revolution \cite{ Carl1997}. Inverse problems for the perturbed Bessel operators $L(\ell, q)$ on a finite interval attract the attention of many researchers. It is well known that the spectral measure (or the eigenvalues and the corresponding norming constants, or the Weyl-Titchmarsh $m$-function) can uniquely determine the potential function. Furthermore, the two sets of eigenvalues corresponding to the perturbed Bessel operators under different boundary conditions can also uniquely determine the potential function. 
These inverse problems are studied, for example, in \cite{Borg1946,Guli2001,Levit1987,Marc1986,PT1987} for the case $\ell = 0$, and in \cite{SRYal2007, Carl1997,Eckh2014,Guli2005,Gasy1965,HS2010,KST2010,Ser2007,LAVS1994} for more general values of $\ell$.

Subsequently, in order to establish the uniqueness of the inverse problem, many studies have been conducted on inverse problems in which other types of data are considered. The given data include the following aspects:
\begin{enumerate}[label=\textnormal{(\arabic*).}]
\item A set of eigenvalues possibly taken from infinitely many different spectra.
\item A set of norming constants corresponding to known eigenvalues.
\item The potential itself on the interval $(a, 1) \subset(0, 1)$.
\item The smoothness of the potential in the neighbourhood of $a$.
\end{enumerate}

For the perturbed Bessel operator $L(\ell, q)$ with $\ell=0$, the inverse problems based on the mixed data sets (1)--(4) are extensively studied; see, e.g., \cite{AFR2009,del1997,GS2000,Hat2021,HL1978,Horv2005,HorvO2016,WX2012}, and the references therein.

For the perturbed Bessel operators $L(\ell, q)$ with $\ell$ being an integer or a real number, there are also some studies on inverse problems based on mixed data from (1)--(4). The inverse problem of recovering the potential function $q$ from the data in (1) is discussed in \cite{Koy2009}. The inverse problem with the data from (3) is the so-called half-inverse problem, which can be seen in \cite{ET2013,Guli2023,HE2005}, where one spectrum is used. The inverse problem with the mixed data from (1) and (3) is discussed in \cite{XYB2023}. The inverse problem based on the mixed data from (2) and (3) with one spectrum is discussed in \cite{YGJ2019}. 

There are also some studies on using other types of spectral data to solve the inverse spectral problem for the operator $L(\ell, q)$. For instance, the problem of recovering the potential function $q$ using the spectra of $L(\ell, q)$ corresponding to different values of $\ell$ (see, e.g., \cite{Aceto2008,CS1994,Chr1993,WPRun2001}),  and the so-called three-spectra inverse problem (see, e.g., \cite{XY2024}). Moreover, the inverse problem theory also exhibits a rich body of research results for other differential operators with Bessel-type singularities (see, e.g., \cite{SRYalA2007,Bondarenko2018,OYurko2002,Yurko2002}). Research on inverse spectral problems over infinite intervals is also extensive; however, it is less relevant to the present paper and thus will not be discussed here.

In this paper, we aim to recover the potential $q$ from the mixed data (1)--(3) and the mixed data (1)--(4), respectively. The main results of this paper are Theorems \ref{t4.1}--\ref{t4.3}, where the mixed data (1)--(3) are used in Theorems \ref{t4.1} and \ref{t4.2}, and the mixed data (1)--(4) are used in Theorem \ref{t4.3}. For $\ell=0$, similar results can be found in \cite{HorvO2016}.

Our criterion for uniqueness based on the mixed data (1)--(3) is related to the closedness of a certain function system, which is constructed using eigenvalues and the corresponding norming constants. To formulate the results, we need the following definition, which is introduced in \cite{Horv2005}. For $p \in(1, \infty), 1/p+1/p^{\prime}=1$ and $a>0$, a system $\{\varphi_n \mid \varphi_n \in L^{p^{\prime}}(0, a), n \geq 1\}$ is called \textbf{closed} in $L^{p}(0, a)$ if $h \in L^{p}(0, a)$ and $\textstyle\int_0^a h(x) \varphi_n(x)dx=0$ for all $n$ implies $h=0$. It is easy to verify that the closedness of $\{\varphi_n \mid  n \geq 1\}$ in $L^{p}(0, a)$ for $p \in(1, \infty)$ is equivalent to the completeness of $\{\varphi_n \mid  n \geq 1\}$ in $L^{p^{\prime}}(0, a)$.

For a set $\Lambda=\{\lambda_{1}, \lambda_{2}, \lambda_{3}, \cdots\} \subset \mathbb{R}$ and a subset $S \subset \Lambda$ we define the function system:
\begin{align*}
C(\Lambda, S)
:=\{\cos(2\sqrt{\lambda_n}x)\mid\lambda_n\in\Lambda\} 
\cup\{x\sin(2\sqrt{\lambda_n}x)\mid\lambda_n\in S\}.
\end{align*}
Given the mixed data (1)--(3), we can formulate the following theorem.

\begin{theorem}\label{t4.1}
Let $\ell\in \mathbb{N}\cup\{0\}$, $p\in(1, \infty)$, $q\in L^p(0,1)$, and $a \in(0,1]$. 
Suppose the elements $\lambda_n$ in
$\Lambda
=\{\lambda_{n}\mid\lambda_{n}\in\sigma(\ell, q, \beta_n), 
n\in\mathbb{N}\}$ 
satisfy $\lambda_n \nrightarrow -\infty$, and let $S \subset \Lambda$ be a subset such that the values $\zeta(\lambda_n, \beta_n, q)$ are known a priori for all $\lambda_n \in S$.
If the system
\begin{align*}
S_{\ell}(\Lambda,S)
:= 
\begin{cases}
\{x^{2k}\mid k=0,1,2,\cdots,\ell\}\cup C(\Lambda, S), 
& 0 \notin \Lambda, \\ 
\{x^{2k}\mid k=0,1,2,\cdots,\ell+1\}\cup C(\Lambda, S), 
& 0 \in \Lambda, 0\notin S, \\ 
\{x^{2k}\mid k=0,1,2,\cdots,\ell+2\}\cup C(\Lambda, S), 
& 0 \in S,
\end{cases}
\end{align*}
is closed in $L^p(0, a)$, then $q$ on $(a, 1), \Lambda$, and $S$ uniquely determine $q \in L^p(0, a)$.
\end{theorem}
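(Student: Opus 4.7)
The plan is to argue by uniqueness. Suppose $q_1,q_2\in L^p(0,1)$ share the data (same $\Lambda$, same $\zeta(\lambda_n,\beta_n,\cdot)$ for $\lambda_n\in S$, and $q_1=q_2$ on $(a,1)$), and set $h:=q_1-q_2\in L^p(0,a)$. The strategy is to show that $h$ is orthogonal, in the $L^p$--$L^{p'}$ duality, to every element of $S_\ell(\Lambda,S)$; the closedness hypothesis then forces $h=0$.

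The first step is a Lagrange identity for the two Bessel equations. With $\psi_i(\lambda,x):=\psi_\ell(\lambda,x,q_i)$, a direct computation gives $(\psi_2\psi_1'-\psi_1\psi_2')'=h\cdot\psi_1\psi_2$, so
\[[\psi_2\psi_1'-\psi_1\psi_2']_0^a=\int_0^a h(x)\,\psi_1(\lambda,x)\,\psi_2(\lambda,x)\,dx.\]
At $\lambda=\lambda_n\in\Lambda$, the boundary contribution at $x=a$ vanishes because $q_1=q_2$ on $(a,1)$ forces $\psi_1(\lambda,\cdot)=\psi_2(\lambda,\cdot)$ on $[a,1]$ (same ODE, same initial conditions at $x=1$), while the contribution at $x=0$ vanishes because at the eigenvalue both $\psi_i(\lambda_n,\cdot)\sim c_{i,n}x^{\ell+1}$ are regular, so the Wronskian is $O(x^{2\ell+1})$. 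Hence
\[\int_0^a h(x)\,\psi_1(\lambda_n,x)\,\psi_2(\lambda_n,x)\,dx=0,\qquad \lambda_n\in\Lambda.\qquad(*)\]
For $\lambda_n\in S$, the equality of norming constants combined with agreement on $[a,1]$ yields $\int_0^a[\psi_1^2-\psi_2^2](\lambda_n,x)\,dx=0$. Combining this with a $\lambda$-differentiated version of the Lagrange identity (which converts $\psi^2$ integrals into boundary expressions in the $\dot\psi_i$) produces a companion orthogonality
\[\int_0^a h(x)\,\Psi_n(x)\,dx=0,\qquad \lambda_n\in S,\qquad(**)\]
where $\Psi_n$ is a kernel of the same analytic type as $\psi_1\psi_2$ but carrying an extra factor tracking the $\lambda$-derivative.

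The second step converts $(*)$ and $(**)$ into orthogonalities against $S_\ell(\Lambda,S)$. For nonnegative integer $\ell$, the free regular solution $\phi_\ell^{\mathrm{free}}(\lambda,x)=c_\ell\sqrt{\lambda}\,x\,j_\ell(\sqrt{\lambda}x)$ of $-f''+\ell(\ell+1)x^{-2}f=\lambda f$ is, by Rayleigh's formula, a finite linear combination of $x^j\sin(\sqrt{\lambda}x)$ and $x^j\cos(\sqrt{\lambda}x)$ with $0\le j\le\ell$, so double-angle and product-to-sum identities express $[\phi_\ell^{\mathrm{free}}(\lambda,x)]^2$ as a finite linear combination of $x^{2k}$, $x^{2k}\cos(2\sqrt{\lambda}x)$, and $x^{2k+1}\sin(2\sqrt{\lambda}x)$ with $0\le k\le\ell$. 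I would then use a transmutation representation $\psi_i(\lambda_n,x)=A_{i,n}\phi_\ell^{\mathrm{free}}(\lambda_n,x)+\int_0^x K_i(x,y)\,\phi_\ell^{\mathrm{free}}(\lambda_n,y)\,dy$ to substitute into $(*)$ and $(**)$, controlling the remainders uniformly in $n$ via the asymptotics \eqref{e1.3}--\eqref{e1.4}. Extracting the coefficients of each basis element across all $n$ yields orthogonality of $h$ to $\cos(2\sqrt{\lambda_n}x)$ for every $\lambda_n\in\Lambda$, to $x\sin(2\sqrt{\lambda_n}x)$ for every $\lambda_n\in S$, and to $x^{2k}$ in the range stated. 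The three-case distinction around $0\in\Lambda$ or $0\in S$ is forced by the collapses $\cos(2\sqrt{\lambda_n}x)\to 1$ and $x\sin(2\sqrt{\lambda_n}x)\to 0$ at $\lambda_n=0$, which cost independent polynomial constraints that are restored by enlarging the polynomial block of $S_\ell(\Lambda,S)$.

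The hard part will be the bookkeeping in this second step: one must control not only the principal transmutation term but also the cross terms and the remainder $\int_0^x K_i(x,y)\,\phi_\ell^{\mathrm{free}}(\lambda_n,y)\,dy$ uniformly in $n$, and verify that the higher-degree basis functions $x^{2k+1}\sin(2\sqrt{\lambda_n}x)$ and $x^{2k}\cos(2\sqrt{\lambda_n}x)$ with $k\ge 1$ appearing in the expansion of $\psi_1\psi_2$ do not impose independent constraints but are reabsorbed into the $x^{2j}$ orthogonalities already in hand or cancel between the $(*)$ and $(**)$ expansions. A secondary subtlety is identifying why the leading asymptotic contribution to $\Psi_n$ is exactly $x\sin(2\sqrt{\lambda_n}x)$; this relies on $\partial_{\sqrt{\lambda}}\cos(2\sqrt{\lambda}x)=-2x\sin(2\sqrt{\lambda}x)$ together with the hypothesis $\lambda_n\nrightarrow-\infty$, which ensures that $\sqrt{\lambda_n}$ is eventually well-defined and tends to infinity along the sequence.
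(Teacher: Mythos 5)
Your skeleton matches the paper's at the level of strategy: form the Wronskian-type function $H_\ell(\lambda)=\int_0^a(q-\hat q)\,\phi_\ell(\lambda,x,q)\phi_\ell(\lambda,x,\hat q)\,dx$, observe that it vanishes at each $\lambda_n\in\Lambda$ and that its $\lambda$-derivative vanishes at each $\lambda_n\in S$ (this is the paper's Lemma \ref{l3.4}), and feed the resulting relations into the closedness hypothesis. The genuine gap is in your second step. For each fixed $n$ you obtain exactly one scalar relation $\int_0^a h(x)\,\psi_1(\lambda_n,x)\psi_2(\lambda_n,x)\,dx=0$; the kernel $\psi_1\psi_2$ is $\cos(2\sqrt{\lambda_n}x)$ plus a transmutation correction that is an exact Volterra integral term, not a remainder that can be ``controlled uniformly in $n$'' or made to vanish. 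There is no expansion in which to ``extract the coefficients of each basis element across all $n$'': you have one equation per $n$, and the correction does not decouple from the principal term in any single one of them. Consequently the orthogonality you claim for $h$ itself against $\cos(2\sqrt{\lambda_n}x)$, $x\sin(2\sqrt{\lambda_n}x)$ and $x^{2k}$ is not established, and it is not the statement that is actually true.

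What the paper proves instead (Lemma \ref{l3.1}, imported from \cite{XYB2023}) is that after multiplying by $\lambda^{\ell+1}$ and interchanging the order of integration, the entire family of relations takes the form $\int_0^a\cos(2\sqrt{\lambda}x)\,\bigl(A_{\hat q}(q-\hat q)\bigr)(x)\,dx=0$ plus a constant term killed by Remark \ref{r3.2}, where $A_{\hat q}$ is the Volterra operator \eqref{l3.8}. Closedness of $S_\ell(\Lambda,S)$ then yields $A_{\hat q}(q-\hat q)=0$ --- note the orthogonal function is $A_{\hat q}(q-\hat q)$, not $q-\hat q$ --- and only the Volterra injectivity of $A_{\hat q}$ (Remark \ref{r3.3}) finally gives $q=\hat q$. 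Your plan omits both the rearrangement that produces $A_{\hat q}$ and this last injectivity step, so the argument does not close. Two smaller omissions: the case where $\Lambda$ has a finite accumulation point, which the paper treats separately since then $H_\ell\equiv 0$ identically; and the source of the $x^{2k}$ orthogonalities, which in the paper come from the vanishing Taylor coefficients of the entire function $\lambda^{\ell+1}H_\ell(\lambda)$ at $\lambda=0$ via Lemma \ref{l4.4}, rather than from the Rayleigh expansion of $[\phi_\ell^{\mathrm{free}}]^2$.
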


Note that in this paper, we use the term “uniquely determined $q$” to mean that there is no other potential $\hat{q} \neq q$ that satisfies the same conditions in the theorems or corollaries.

Moreover, we show that, under an additional assumption, the condition in Theorem \ref{t4.1} is not only sufficient but also necessary for uniqueness. For this, we introduce the operator $A_{q}$.
For $q$, $\hat{q}\in L^{p}(0,a)$, $p\in(0,\infty)$ and $a\in(0,1]$,
the linear operator $A_{\hat{q}}: L^p(0, a) \rightarrow L^p(0, a)$ is continuous and defined as
\begin{align}\label{l3.8}
A_{\hat{q}}(f)(x)
=f(x)+2 \int_{x}^{a} f(t) \frac{1}{t} M_{\ell}(t, x, q, \hat{q})dt,  
~~f\in L^p(0, a),
\end{align}
where the function $M_{\ell}(t, x, q, \hat{q})$ is continuous and uniformly bounded in the region 
$\{(x, t) \in \mathbb{R}^2 \mid 0 \leq t \leq x \leq 1\} \setminus\{(0,0)\}$. 
In addition,
\begin{align*}
A_{q}(f)(x)
=f(x)+2 \int_{x}^{a} f(t) \frac{1}{t} M_{\ell}(t, x, q, q)dt,  
~~f\in L^p(0, a).
\end{align*}
For further details on $M_{\ell}$, we refer the reader to \cite{XYB2023}. This article, however, does not delve into its specific properties and thus omits an in-depth introduction.

\begin{theorem}\label{t4.2}
Suppose $\ell\in \mathbb{N}\cup\{0\}$.
If the system $S_{\ell}(\Lambda,S)$ is not closed in $L^p(0, a)$, $p\in(1,\infty)$, and the continuous linear operator $A_q$ is an isomorphism, then $q$ on $(a,1)$, $\Lambda$, and $S$ do not uniquely determine $q \in L^p(0, a)$.
\end{theorem}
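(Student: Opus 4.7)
The plan is to construct a real-valued potential $\hat q\in L^p(0,1)$, equal to $q$ on $(a,1)$ but distinct from $q$ on $(0,a)$, sharing the same eigenvalue data $\{(\lambda_n,\beta_n)\}$ and the same norming constants $\zeta(\lambda_n,\beta_n,\cdot)$ for $\lambda_n\in S$. Since $S_\ell(\Lambda,S)$ fails to be closed in $L^p(0,a)$, there is a nonzero $h\in L^p(0,a)$ with $\int_0^a h(x)\varphi(x)\,dx=0$ for every $\varphi\in S_\ell(\Lambda,S)$. Because $S_\ell(\Lambda,S)$ consists of real-valued functions, $h$ may be taken real. The isomorphism hypothesis then yields a nonzero $g:=A_q^{-1}h\in L^p(0,a)$; I declare $\hat q:=q+g$ on $(0,a)$ and $\hat q:=q$ on $(a,1)$. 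Real-valuedness of $g$ is secured because $M_\ell(t,x,q,q)$ is real, so $A_q$ and its inverse preserve the real subspace.

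The mechanism that turns orthogonality of $h$ against $S_\ell(\Lambda,S)$ into matching spectral data is the transformation-operator identity underpinning Theorem~\ref{t4.1} and the analysis of \cite{XYB2023}. Schematically, for any $\lambda\in\mathbb{R}$ one has
\begin{equation*}
\int_0^a \bigl(\hat q-q\bigr)(x)\,\psi_\ell(\lambda,x,q)\,\psi_\ell(\lambda,x,\hat q)\,dx = \int_0^a A_q\bigl(\hat q-q\bigr)(x)\,\Phi(\lambda,x)\,dx,
\end{equation*}
where $\Phi(\lambda,\cdot)$ is a linear combination of $\cos(2\sqrt\lambda\,x)$ and of the polynomial corrections $x^{2k}$ forced by the Bessel singularity at the origin; a companion identity converts the norming-constant discrepancy at $\lambda=\lambda_n$ into a pairing of $A_q(\hat q-q)$ with $x\sin(2\sqrt{\lambda_n}\,x)$ together with further polynomial terms. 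Evaluating at $\lambda=\lambda_n\in\Lambda$ and using $A_q(\hat q-q)=h$, the orthogonality established in the first step forces both right-hand sides to vanish; a standard reading of these Wronskian-type vanishings shows that every $\lambda_n$ remains an eigenvalue of $L(\ell,\hat q)$ with boundary parameter $\beta_n$, and that $\zeta(\lambda_n,\beta_n,\hat q)=\zeta(\lambda_n,\beta_n,q)$ for every $\lambda_n\in S$. Since $h\neq 0$ and $A_q$ is injective, $g\neq 0$, so $\hat q\neq q$ and uniqueness fails.

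The main obstacle is the middle step: extracting and justifying the transformation-operator identity in precisely the form that pairs with $S_\ell(\Lambda,S)$, and verifying that the polynomial generators $\{x^{2k}:0\le k\le\ell\}$, $\{x^{2k}:0\le k\le\ell+1\}$, $\{x^{2k}:0\le k\le\ell+2\}$ correspond exactly to the three cases $0\notin\Lambda$, $0\in\Lambda\setminus S$, $0\in S$. This amounts to a careful power-series expansion of $\psi_\ell(\lambda,x,q)\psi_\ell(\lambda,x,\hat q)$ near $x=0$ and at $\lambda=0$, combined with the structure of the kernel $M_\ell$ supplied by \cite{XYB2023}. A secondary concern is ensuring that $\hat q$ is a genuine self-adjoint $L^p$ potential rather than an infinitesimal deformation, which is precisely what the isomorphism hypothesis on $A_q$ provides.
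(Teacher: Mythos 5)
There is a genuine gap at the heart of your construction. You define $\hat q := q + A_q^{-1}h$, so that $A_q(\hat q - q) = h$. But the identity that converts spectral data into orthogonality relations (Lemma \ref{l3.1}, i.e.\ \eqref{l3.7}) involves the operator $A_{\hat q}$ applied to $q-\hat q$, where the kernel is $M_\ell(t,x,q,\hat q)$ and therefore depends on the \emph{unknown} potential $\hat q$ itself. What you actually need is a solution of
\begin{equation*}
A_{\hat q}(\hat q - q) = \eta h ,
\end{equation*}
which is a genuinely nonlinear equation in $\hat q$: your linearized choice gives $A_{\hat q}(\hat q - q) = A_{\hat q}\bigl(A_q^{-1}h\bigr) \neq h$ in general, since $A_{\hat q}\neq A_q$. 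Consequently $\lambda^{\ell+1}H_\ell(\lambda)$ is \emph{not} equal to $\tfrac12\int_0^a h(x)\cos(2\sqrt{\lambda}x)\,dx$ up to a constant for your $\hat q$, and the vanishing of $H_\ell(\lambda_n)$ and $\dot H_\ell(\lambda_n)$ does not follow from the orthogonality of $h$ to $S_\ell(\Lambda,S)$. The paper closes exactly this gap with Lemma \ref{l4.5} (Horv\'ath's open-mapping--type lemma): using \emph{both} the isomorphism hypothesis on $A_q$ \emph{and} the Lipschitz continuity of $q_1\mapsto A_{q_1}$ (verified in \cite[Lemma 6.1]{XYB2023}), it shows that the range of the nonlinear map $q_1\mapsto A_{q_1}(q_1-q)$ contains a ball about the origin in $L^p(0,a)$, so that $\eta h$ is attained for $\eta$ small. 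This is also why the scaling by a small $\eta$ is essential, which your argument omits.

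Two smaller points. First, your schematic identity is written with $\psi_\ell$ and a vague combination $\Phi(\lambda,x)$ of cosines and powers $x^{2k}$; in the paper the identity is for $\phi_\ell$ (normalized at the origin), the power terms do not appear inside the integrand at all, and the conditions on $\{x^{2k}\}$ arise instead from the order of the zero of $\lambda^{\ell+1}H_\ell(\lambda)$ at $\lambda=0$ via the derivative formula of Lemma \ref{l4.4}; this also explains the three cases $0\notin\Lambda$, $0\in\Lambda\setminus S$, $0\in S$ (multiplicities $\ell+1$, $\ell+2$, $\ell+3$). Second, you never dispose of the constant term $\tfrac12\int_0^a(q-\hat q)\,dx$ in \eqref{l3.7}; in the paper this is killed by evaluating at $\lambda=0$ and using the orthogonality of $h$ to $x^0=1$.
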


We can replace the knowledge of finitely many eigenvalues (and norming constants) with the knowledge of the derivatives of $q$ at $a$. Let us define the common counting function of the eigenvalues and norming constants by
\begin{align*}
m(t):=2n_{\Lambda}(t^2)+2n_{S}(t^2),
\end{align*}
where the counting function is defined as
\begin{align*}
n_{A}(t)
=\sum_{\substack{\lambda\in A \\ |\lambda|\leq t}}1,
\end{align*}
for a set $A$ of certain numbers. 
For $1\leq p<\infty$, denote by $p^{\prime}$ the conjugate exponent, i.e., $1/p+1/p^{\prime}=1$ for $p\in(1,\infty)$, just as we have used earlier in this article; moreover, if $p=\infty$, then $p^{\prime}=1$.
Next, we present a sufficient condition for the uniqueness of the following inverse problem.

\begin{theorem}\label{t4.3}
Let $\ell\geq-1/2$, $\hat{q}$, $q\in L^{1}(0,1)$, and $a \in(0,1]$.
Suppose that $\hat{q}=q$ a.e. on $(a,1)$. Given some distinct eigenvalues $\lambda_n\in \sigma(\ell,q,\beta_{n})\cap\sigma(\ell,\hat{q},\beta_{n}), n\in\mathbb{N}$, $\lambda_{n}\nrightarrow -\infty$, and $\zeta(\lambda_{n},\beta_{n}, q)=\zeta(\lambda_{n},\beta_{n}, \hat{q})$, $\lambda_{n}\in S$, for some $S \subset \Lambda$.

For $1\leq p<\infty$, assume further $q-\hat{q} \in W^{k, p}((a-\delta_0, a])$ and $(q-\hat{q})^{(i)}(a)=0$, for some $\delta_0>0$, $k\in\mathbb{N}\cup\{0\}$, $i=0,1,\cdots, k-1$.  
If there exists a sequence $R_{i}\rightarrow\infty$ such that
\begin{align}\label{e1.0}
\limsup_{i \rightarrow \infty}
\left[\int_{0}^{R_{i}}\frac{m(t)}{t}dt
-\frac{4aR_{i}}{\pi}+\left(k+2\ell+2
+\frac{1}{p^{\prime}}\right) \ln R_{i}\right]
>-\infty.
\end{align}
Then $q=\hat{q}$ a.e. on $(0,a)$.

For $p=\infty$, assume further $q-\hat{q} \in C^{k}((a-\delta_0, a])$ and $(q-\hat{q})^{(i)}(a)=0$ for $i=0, \cdots, k$. Then \eqref{e1.0} implies the same conclusion.
\end{theorem}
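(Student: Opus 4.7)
\smallskip

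\textbf{Proof plan for Theorem \ref{t4.3}.} The plan is to reduce the hypothesis to a completeness (density) problem for a system of cosines and sines on $(0,a)$ and then apply a Levinson-type density criterion. First I would derive the standard integral identity: using Green's formula together with the boundary behaviour at $x=0$ imposed by the Bessel singularity, the fact that each $\lambda_n$ is a common eigenvalue of $L(\ell,q)$ and $L(\ell,\hat q)$ with the same $\beta_n$ gives
\begin{align*}
\int_0^1 (q-\hat q)(x)\,\psi_\ell(\lambda_n,x,q)\,\psi_\ell(\lambda_n,x,\hat q)\,dx=0,
\end{align*}
and the matching of norming constants for $\lambda_n\in S$ yields the companion identity obtained by differentiating in $\lambda$ (so that a factor like $x\psi_\ell\psi_\ell$ appears). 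Since $q=\hat q$ on $(a,1)$, both integrals collapse to integrals over $(0,a)$.

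Next I would translate these orthogonality relations to the trigonometric system on $(0,a)$ via the transformation operator $A_q$ of \eqref{l3.8}. The kernel $M_\ell$ relates $\psi_\ell(\lambda,\dott,q)\psi_\ell(\lambda,\dott,\hat q)$ to $\cos(2\sqrt\lambda x)$, and the $\lambda$-derivative version relates it to $x\sin(2\sqrt\lambda x)$, up to the finitely many low-order polynomial terms $x^{2k}$ ($k\le\ell$ or $\ell+1,\ell+2$ depending on whether $0\in\Lambda$ or $0\in S$) that already appear in $S_\ell(\Lambda,S)$ in Theorem \ref{t4.1}. Thus, setting $g:=A_q(q-\hat q)\in L^p(0,a)$, the hypotheses are equivalent to
\begin{align*}
\int_0^a g(x)\,\cos(2\sqrt{\lambda_n}\,x)\,dx=0,\quad \lambda_n\in\Lambda,\qquad
\int_0^a g(x)\,x\sin(2\sqrt{\lambda_n}\,x)\,dx=0,\quad \lambda_n\in S,
\end{align*}
plus the finitely many polynomial moments. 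Since $A_q$ is a Volterra-type perturbation of the identity, it is an isomorphism of $L^p(0,a)$, so it suffices to show $g\equiv 0$.

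The smoothness hypothesis is then used to upgrade $g$. Because $q-\hat q\in W^{k,p}((a-\delta_0,a])$ with $(q-\hat q)^{(i)}(a)=0$ for $0\le i\le k-1$ (resp.\ $0\le i\le k$ when $p=\infty$), integration by parts $k$ times in the above identities throws powers of $(2\sqrt{\lambda_n})^{-k}$ onto the integrals, and converts the orthogonality of $g$ to the orthogonality of a function differing from $g$ by a smoother piece; equivalently, one may view the functions $\cos(2\sqrt{\lambda_n}x)/\lambda_n^{k/2}$ and $x\sin(2\sqrt{\lambda_n}x)/\lambda_n^{k/2}$ as the test system. This smoothness trade effectively buys $k\log R$ of extra counting-function density on the right-hand side of \eqref{e1.0}, which, together with the $2\ell+2$ polynomial moments and the Hausdorff–Young exponent $1/p'$, accounts for the term $(k+2\ell+2+1/p^{\prime})\ln R_i$.

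The heart of the proof — and the main obstacle — is then a Levinson/Beurling-Malliavin type completeness theorem: if the common counting function $m(t)=2n_\Lambda(t^2)+2n_S(t^2)$ of the frequencies $\sqrt{\lambda_n}$ satisfies
\begin{align*}
\limsup_{i\to\infty}\Bigl[\int_0^{R_i}\frac{m(t)}{t}\,dt-\frac{4aR_i}{\pi}+(k+2\ell+2+1/p')\ln R_i\Bigr]>-\infty,
\end{align*}
then the (weighted) system $\{\cos(2\sqrt{\lambda_n}x)\}_\Lambda\cup\{x\sin(2\sqrt{\lambda_n}x)\}_S$, together with the finitely many monomials $x^{2j}$, is closed in $L^p(0,a)$. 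The constant $4a/\pi$ is the critical density for cosine/sine systems on an interval of length $a$ (each frequency contributing twice, once from $\Lambda$ and once from its reflection in the sine/cosine parity), and the logarithmic correction is exactly the sharpness afforded by the smoothness and the $L^{p'}$ duality. I would derive this by constructing an entire function vanishing on $\pm\sqrt{\lambda_n}$ with controlled type $\le 2a$ and logarithmic lower bounds, and then applying the Paley–Wiener theorem to conclude that any $h\in L^{p'}(0,a)$ annihilating the system must have a Fourier–cosine transform that vanishes identically, so $h=0$. This is where the analytic machinery of \cite{Horv2005,HorvO2016} enters, and adapting it to carry the extra Bessel-related exponent $2\ell+2$ and the integration-by-parts exponent $k$ uniformly in $p$ is the main technical point; once that density theorem is in hand, the above reductions give $g\equiv 0$ and hence $q=\hat q$ a.e.\ on $(0,a)$.
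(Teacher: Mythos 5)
Your plan has a genuine gap, and in fact it cannot work in the stated generality. The reduction to orthogonality of the system $\{\cos(2\sqrt{\lambda_n}x)\}\cup\{x\sin(2\sqrt{\lambda_n}x)\}$ via the transformation operator $A_{\hat q}$ rests on the representation \eqref{l3.7} of $\lambda^{\ell+1}H_\ell(\lambda)$, which is only available for $\ell\in\mathbb{N}\cup\{0\}$ and $q,\hat q\in L^p$ with $p>1$ (Lemma \ref{l3.1}); for non-integer $\ell\ge -1/2$ and $q\in L^1$ the factor $\lambda^{\ell+1}$ is not even single-valued and no such cosine representation is used in the paper. Theorem \ref{t4.3} is precisely the statement that covers all $\ell\ge-1/2$, so the route through $A_{\hat q}$ is closed off from the start. (Also, you assert $A_q$ is an isomorphism; the paper explicitly warns in Remark \ref{r4.6} that this is a strong hypothesis that may fail. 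Only injectivity, Remark \ref{r3.3}, is safe — which would suffice for your purposes, but the larger obstacle above remains.)

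The second, deeper problem is that the ``heart of the proof'' you defer — a Levinson/Beurling--Malliavin completeness theorem in which the density threshold is lowered by $(k+2\ell+2+1/p')\ln R$ — is not a well-posed completeness statement: the exponent $k$ records the smoothness of the \emph{particular} difference $q-\hat q$ near $a$, not a property of the trigonometric system, so it cannot be absorbed into a closedness criterion for the system in $L^p(0,a)$. The paper's actual mechanism is the reverse of yours and requires no completeness theorem: assume $q\neq\hat q$; then $H_\ell(z^2)$ is a nontrivial entire function (by Weyl $m$-function uniqueness) vanishing at $\pm\sqrt{\lambda_n}$, doubly for $\lambda_n\in S$; Jensen's formula bounds $\int_0^r m(t)t^{-1}\,dt$ by the $\log$-average of $|H_\ell|$ on circles; and Lemma \ref{l3.6} supplies the pointwise bound
\begin{equation*}
|H_{\ell}(z^2)|\lesssim \frac{e^{2a|\mathrm{Im}\,z|}}{|z|^{2\ell+2}\,|\mathrm{Im}\,z|^{k+1/p'}}\bigl(\varepsilon+o(1)\bigr),
\end{equation*}
in which the $k$ and $1/p'$ arise from $k$-fold integration by parts near $a$ and H\"older's inequality, and $2\ell+2$ from the asymptotics of $\phi_\ell$. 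Letting $\varepsilon\downarrow 0$ forces the bracket in \eqref{e1.0} to $-\infty$, contradicting the hypothesis. Your sketch of ``constructing an entire function vanishing on $\pm\sqrt{\lambda_n}$ with controlled type and logarithmic lower bounds'' is exactly the delicate canonical-product analysis that the paper's argument avoids, and it is not carried out in your proposal. To repair the plan you would essentially have to replace the completeness step by the Jensen-formula argument applied directly to $H_\ell$, i.e.\ adopt the paper's proof.
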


The paper is organized as follows. 
In Sections \ref{s2} and \ref{s3}, we introduce two auxiliary functions: the characteristic function $\Delta(\lambda, q)$ and the entire function $H_{\ell}(\lambda)$, respectively. 
In Section \ref{s4}, the proofs of Theorems \ref{t4.1}--\ref{t4.3} are given, and the methods and techniques we used here are mainly based on the works \cite{Horv2005,HorvO2016,XYB2023}. 
In Section \ref{s5}, we present some corollaries of Theorem \ref{t4.3}, where some ideas are adopted from \cite{Hat2021}.
The main results of this paper are Theorems \ref{t4.1}--\ref{t4.3}; moreover, Corollaries \ref{t5.3}, \ref{t5.2}, \ref{t5.6}, \ref{t5.7}, \ref{t5.9}, and \ref{t5.8} also have their interest, and some of these corollaries improve the previous results even in the case $\ell=0$.

\section{The characteristic function $\Delta(\lambda,q)$}\label{s2}

Recall apart from the solution $\psi_{\ell}(\lambda, x, q)$ defined in \eqref{e1.8}, there is another solution $\phi_{\ell}(\lambda, x, q)$ for the equation \eqref{e1.1} that satisfies 
\begin{align*}
\lim_{x \rightarrow 0} x^{-\ell-1} \phi_{\ell}(\lambda, x, q)=\frac{\sqrt{\pi}}{\Gamma\left(\ell+\frac{3}{2}\right)2^{\ell+1}},
\end{align*}
where $\Gamma(\cdot)$ denotes the gamma function, see \cite{KST2010}. 
From \cite[Lemma 2.2]{KST2010} and \cite[(9.2.5) and (9.2.11)]{Abra1972}, it follows that for every fixed $x\in (0,1]$, the function $\phi_{\ell}(\lambda, x, q)$ is an entire function of $\lambda$, and it has the asymptotic formulas as $|\lambda| \rightarrow \infty$: 
\begin{align}
\phi_{\ell}(\lambda, x, q)
&=\lambda^{-\frac{\ell+1}{2}}
\left(\sin \left(\sqrt{\lambda} x-\frac{\ell \pi}{2}\right)+O\left((\textnormal{R}(\lambda)+|\lambda|^{-\frac{1}{2}}) e^{x| \textnormal{Im}(\sqrt{\lambda}) \mid}\right)\right),\label{e1.5}\\
\phi_{\ell}^{\prime}(\lambda, x, q)
&=\lambda^{-\frac{\ell}{2}}
\left(\cos \left(\sqrt{\lambda} x-\frac{\ell \pi}{2}\right)+O\left((\textnormal{R}(\lambda)+|\lambda|^{-\frac{1}{2}}) e^{x|\textnormal{Im}(\sqrt{\lambda})|}\right)\right),\label{e1.6}
\end{align}
uniformly in $x\in(x_0,1)$ for every fixed $0<x_0<1$, where $\textnormal{R}(\lambda)$ is defined in \eqref{e1.7}. 

Denote
\begin{align*}
\Delta(\lambda, q)
:=W(\psi_{\ell}(\lambda, x, q), \phi_{\ell}(\lambda, x, q)).
\end{align*}
The function $\Delta(\lambda, q)$ is called the characteristic function.
Clearly,
\begin{align*}
\Delta(\lambda, q)
&=
\begin{cases}
\phi_{\ell}^{\prime}(\lambda, 1, q)+\beta\phi_{\ell}(\lambda, 1, q), 
&\textnormal{if}~\beta\in\mathbb{R}, \\
\phi_{\ell}(\lambda, 1, q), 
&\textnormal{if}~\beta=\infty,
\end{cases}
\end{align*}
thus the function $\Delta(\lambda, q)$ is an entire function of $\lambda$. The zeros $\{\lambda_n\}$ of the characteristic function $\Delta(\lambda, q)$ coincide with the eigenvalues of the boundary value problem \eqref{e1.1} and \eqref{e1.2}. The functions $\phi_{\ell}(\lambda_n, x, q)$ and $\psi_{\ell}(\lambda_n, x, q)$ for $\lambda_n\in \sigma(\ell, q, \beta)$ are eigenfunctions, and there exists a sequence $\{\kappa(\lambda_n, \beta, q)\}$ such that
\begin{align}\label{V1}
\psi_{\ell}(\lambda_n, x, q)
=\kappa(\lambda_n, \beta, q)\phi_{\ell}(\lambda_n, x, q), ~~\kappa(\lambda_n, \beta, q) \neq 0.
\end{align}

We use the notation 
\begin{align*}
\tau(\lambda, q):=\int_0^1|\phi_{\ell}(\lambda, x, q)|^{2}dx.
\end{align*}

\begin{lemma}\label{l2.2}
Suppose  $\lambda_n\in\sigma(\ell, q, \beta)$. Then the following relation holds,
\begin{align*}
\dot{\Delta}(\lambda_n, q):=\frac{d\Delta(\lambda, q)}{d\lambda}\bigg|_{\lambda=\lambda_n}=
-\tau(\lambda_n, q)\kappa(\lambda_n, \beta, q).
\end{align*}
\end{lemma}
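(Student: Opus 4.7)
The plan is to derive the identity from a Lagrange--Wronskian relation applied to the pair $\phi_\ell(\lambda,x,q)$ and $\phi_\ell(\lambda_n,x,q)$, then translate the resulting boundary data into $\dot{\Delta}(\lambda_n,q)$ via the proportionality \eqref{V1}.

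Since both $\phi_\ell(\lambda,\cdot,q)$ and $\phi_\ell(\lambda_n,\cdot,q)$ satisfy \eqref{e1.1} (with spectral parameters $\lambda$ and $\lambda_n$, respectively), a direct computation with $W(f,g):=fg'-f'g$ gives
\begin{align*}
\frac{d}{dx}W\bigl(\phi_\ell(\lambda,x,q),\phi_\ell(\lambda_n,x,q)\bigr)
=(\lambda-\lambda_n)\,\phi_\ell(\lambda,x,q)\,\phi_\ell(\lambda_n,x,q).
\end{align*}
Integrating over $(0,1)$ and using the normalization $\phi_\ell(\lambda,x,q)\sim c_\ell\, x^{\ell+1}$ as $x\to 0^+$ (which forces the Wronskian to vanish at $x=0$ for $\ell\geq-1/2$), I obtain
\begin{align*}
W\bigl(\phi_\ell(\lambda,x,q),\phi_\ell(\lambda_n,x,q)\bigr)\big|_{x=1}
=(\lambda-\lambda_n)\int_0^1\phi_\ell(\lambda,x,q)\phi_\ell(\lambda_n,x,q)\,dx.
\end{align*}
Dividing by $\lambda-\lambda_n$ and letting $\lambda\to\lambda_n$, the right-hand side tends to $\tau(\lambda_n,q)$, while the left-hand side, a difference quotient of a function that vanishes at $\lambda=\lambda_n$, tends to its $\lambda$-derivative, yielding
\begin{align*}
\dot{\phi}_{\ell}(\lambda_n,1,q)\,\phi_{\ell}^{\prime}(\lambda_n,1,q)
-\dot{\phi}_{\ell}^{\prime}(\lambda_n,1,q)\,\phi_{\ell}(\lambda_n,1,q)=\tau(\lambda_n,q),
\end{align*}
where the dot denotes $\partial/\partial\lambda$.

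To convert the left-hand side into $\dot{\Delta}(\lambda_n,q)$, I apply \eqref{V1} at $x=1$. For $\beta\in\mathbb{R}$, the initial data \eqref{e1.8} give $\phi_{\ell}(\lambda_n,1,q)=1/\kappa(\lambda_n,\beta,q)$ and $\phi_{\ell}^{\prime}(\lambda_n,1,q)=-\beta/\kappa(\lambda_n,\beta,q)$. Substituting into the previous display and invoking the formula $\Delta(\lambda,q)=\phi_{\ell}^{\prime}(\lambda,1,q)+\beta\phi_{\ell}(\lambda,1,q)$ from the paragraph just after \eqref{V1}, one obtains
\begin{align*}
\tau(\lambda_n,q)
=-\frac{1}{\kappa(\lambda_n,\beta,q)}\bigl[\dot{\phi}_{\ell}^{\prime}(\lambda_n,1,q)+\beta\,\dot{\phi}_{\ell}(\lambda_n,1,q)\bigr]
=-\frac{\dot{\Delta}(\lambda_n,q)}{\kappa(\lambda_n,\beta,q)},
\end{align*}
which rearranges to the claim. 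The case $\beta=\infty$ is handled analogously: \eqref{V1} with $\psi_{\ell}(\lambda_n,1,q)=0$ and $\psi_{\ell}^{\prime}(\lambda_n,1,q)=-1$ forces $\phi_{\ell}(\lambda_n,1,q)=0$ and $\phi_{\ell}^{\prime}(\lambda_n,1,q)=-1/\kappa(\lambda_n,\infty,q)$, and substituting into $\Delta(\lambda,q)=\phi_{\ell}(\lambda,1,q)$ yields the same identity.

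The only non-routine step is justifying that the boundary Wronskian at $x=0$ vanishes. For $\ell>-1/2$ the two solutions share the same leading Frobenius term $c_\ell x^{\ell+1}$; the cancellation of that leading term leaves a Wronskian which is $O(x^{2\ell+3})$ and hence vanishes as $x\to 0^+$. In the borderline case $\ell=-1/2$, where the indicial equation has a double root, one needs the refined asymptotic expansions supplied by \cite[Lemma 2.2]{KST2010} to verify the same cancellation. Everything else in the argument is a direct computation.
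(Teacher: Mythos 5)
Your proof is correct and follows the same standard route as the argument the paper itself defers to (the Lagrange--Wronskian identity integrated over $(0,1)$, followed by the difference quotient in $\lambda$ and the substitution $\psi_\ell(\lambda_n,\cdot,q)=\kappa(\lambda_n,\beta,q)\phi_\ell(\lambda_n,\cdot,q)$ at $x=1$); the paper omits the proof and cites the $\ell=0$, $q\in L^2$ case in Freiling--Yurko, noting the extension is analogous. Your explicit attention to the vanishing of the Wronskian at $x=0$ for $\ell\geq-1/2$, including the borderline case $\ell=-1/2$, is exactly the point where the singular setting requires care beyond the cited regular case.
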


The proof of Lemma \ref{l2.2} is established in \cite[Lemma 1.1.1, p.7]{Guli2001} under the conditions $\ell=0$ and $q\in L^{2}(0,1)$, rather than $\ell\geq-1/2$ and $q\in L^{1}(0,1)$. The proof is quite similar and omitted.

\section{The entire function $H_{\ell}(\lambda)$}\label{s3}

Let $q$, $\hat{q} \in L^{1}(0, a)$, we use the notation
\begin{align*}
H_{\ell}(\lambda)
:=\left|\begin{array}{ll}
\phi_{\ell}(\lambda, 1, q) 
& \phi_{\ell}(\lambda, 1, \hat{q}) \\
\phi_{\ell}^{\prime}(\lambda, 1, q) 
& \phi_{\ell}^{\prime}(\lambda, 1, \hat{q})
\end{array}\right|.
\end{align*}
Then the function $H_{\ell}(\lambda)$ has the representation (see \cite[Lemma 3.2]{XYB2023})
\begin{align*}
H_{\ell}(\lambda)
=\int_0^1(q(x)-\hat{q}(x))\phi_{\ell}(\lambda, x, q) \phi_{\ell}(\lambda, x, \hat{q})dx.  
\end{align*}
Obviously, if $q(x)=\hat{q}(x)$ a.e. on $(a, 1)$, one has 
\begin{align}\label{e3.1}
H_{\ell}(\lambda)
=\int_0^a(q(x)-\hat{q}(x))\phi_{\ell}(\lambda, x, q)\phi_{\ell}(\lambda, x, \hat{q})dx. 
\end{align}

Next, we give some properties of the entire function $H_{\ell}(\lambda)$.

\begin{lemma}\cite[(4.3)]{XYB2023}\label{l3.1}
Suppose $\ell\in \mathbb{N}\cup\{0\}$, $q$, $\hat{q}\in L^p(0,1)$, $p\in(1,\infty)$, and $q(x)=\hat{q}(x)$ a.e. on $(a, 1)$. Then
\begin{equation}\label{l3.7}
\begin{split}
\lambda^{\ell+1} H_{\ell}(\lambda)
=&\frac{1}{2}\int_0^a(q(x)-\hat{q}(x))dx \\
&+\frac{1}{2}\int_0^a \cos(2\sqrt{\lambda}x)(A_{\hat{q}}((q-\hat{q})(x)) dx, 
\end{split}
\end{equation}
where the operator $A_{\hat{q}}$ is defined in \eqref{l3.8}.
\end{lemma}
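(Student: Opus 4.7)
The starting point is the integral representation \eqref{e3.1} for $H_{\ell}(\lambda)$. My plan has three parts: (i) rewrite the product $\phi_{\ell}(\lambda,x,q)\phi_{\ell}(\lambda,x,\hat q)$ as a $\lambda$-independent Volterra transform of the squared free solution $\phi_{\ell}(\lambda,x,0)^{2}$; (ii) apply Fubini to recognize this transform as the operator $A_{\hat q}$ of \eqref{l3.8}; and (iii) compute $\lambda^{\ell+1}\phi_{\ell}(\lambda,x,0)^{2}$ explicitly for integer $\ell$ to extract the constant piece and the $\cos(2\sqrt{\lambda}x)$ piece.

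For (i) I would invoke the classical transformation-operator theory for the perturbed Bessel equation: for each $q\in L^{1}(0,1)$ there is a triangular kernel $K_{q}(x,t)$, independent of $\lambda$, with $\phi_{\ell}(\lambda,x,q)=\phi_{\ell}(\lambda,x,0)+\int_{0}^{x}K_{q}(x,t)\phi_{\ell}(\lambda,t,0)\,dt$, and similarly for $\hat q$. Multiplying the two representations and reducing the double integrals of $\phi_{\ell}(\lambda,s,0)\phi_{\ell}(\lambda,s',0)$ back to squares via a Povzner/Darboux-type product identity for free Bessel solutions yields a single-kernel representation
\begin{equation*}
\phi_{\ell}(\lambda,x,q)\phi_{\ell}(\lambda,x,\hat q)=\phi_{\ell}(\lambda,x,0)^{2}+\int_{0}^{x}\widetilde K(x,t,q,\hat q)\,\phi_{\ell}(\lambda,t,0)^{2}\,dt,
\end{equation*}
with $\widetilde K$ continuous and independent of $\lambda$; by construction in \cite{XYB2023} one has $\widetilde K(x,t,q,\hat q)=2M_{\ell}(x,t,q,\hat q)/x$. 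Substituting this into \eqref{e3.1} and swapping the order of integration using Fubini produces, directly from \eqref{l3.8}, the clean identity $H_{\ell}(\lambda)=\int_{0}^{a}A_{\hat q}((q-\hat q))(x)\,\phi_{\ell}(\lambda,x,0)^{2}\,dx$.

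For (iii), on integer $\ell\ge 0$ the free solution $\phi_{\ell}(\lambda,x,0)$ is proportional to a spherical Bessel function and admits an explicit finite-sum expansion in $\sin(\sqrt{\lambda}x)$ and $\cos(\sqrt{\lambda}x)$ with polynomial coefficients in $(\sqrt{\lambda}x)^{-1}$. Squaring and multiplying by $\lambda^{\ell+1}$, and applying $\sin^{2}u=(1-\cos 2u)/2$ and $2\sin u\cos u=\sin 2u$, reduces $\lambda^{\ell+1}\phi_{\ell}(\lambda,x,0)^{2}$ to the constant $\tfrac12$ plus $\cos(2\sqrt{\lambda}x)$ and $\sin(2\sqrt{\lambda}x)$ terms multiplied by rational functions of $x$. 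The hard part of the proof is to show that, after pairing against $A_{\hat q}((q-\hat q))(x)$, the lower-order rational coefficients of these oscillatory pieces collapse (via repeated integration by parts in $x$, with all boundary contributions folded into the kernel $M_{\ell}$) to the single clean expression $\tfrac12\cos(2\sqrt{\lambda}x)$ appearing in \eqref{l3.7}, while the constant $\tfrac12$ produces $\tfrac12\int_{0}^{a}(q-\hat q)\,dx$ after a final use of the Volterra structure of $A_{\hat q}$. The hypothesis $q,\hat q\in L^{p}$ with $p\in(1,\infty)$ makes the required Fubini swaps and integration-by-parts manipulations routine for the Volterra operator $A_{\hat q}$.
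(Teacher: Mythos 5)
The paper offers no proof of this lemma at all: it is quoted verbatim from \cite[(4.3)]{XYB2023}, so there is no in-paper argument to compare against. Your outline is in the same general spirit as the derivation in that reference (transformation operators for the perturbed Bessel equation, a representation of the product of two solutions by a $\lambda$-independent Volterra kernel, then Fubini to transpose that kernel onto $q-\hat q$ --- which is exactly why $A_{\hat q}$ in \eqref{l3.8} integrates from $x$ to $a$ rather than from $0$ to $x$). As a roadmap it is sensible.

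However, the two steps that carry the real content are asserted rather than proved. First, the reduction of the cross terms $\phi_{\ell}(\lambda,s,0)\phi_{\ell}(\lambda,s',0)$, $s\neq s'$, to an integral of the squares $\phi_{\ell}(\lambda,t,0)^{2}$ against a $\lambda$-independent kernel is precisely the construction of $M_{\ell}$; invoking an unnamed ``Povzner/Darboux-type product identity'' leaves the kernel $\widetilde K=2M_{\ell}/x$ undefined and, in particular, says nothing about its boundedness near $t=0$, which is what makes \eqref{l3.8} a bounded operator on $L^{p}(0,a)$. Second, your description of $\lambda^{\ell+1}\phi_{\ell}(\lambda,x,0)^{2}$ in step (iii) is incorrect as stated: after squaring, the oscillatory terms carry coefficients that are rational in $\sqrt{\lambda}\,x$, not in $x$ alone --- e.g.\ for $\ell=1$ one gets $\tfrac{1}{2}\bigl(1+\cos(2\sqrt{\lambda}x)\bigr)-\tfrac{\sin(2\sqrt{\lambda}x)}{\sqrt{\lambda}x}+\tfrac{1-\cos(2\sqrt{\lambda}x)}{2\lambda x^{2}}$ --- so they are $\lambda$-dependent and cannot simply be ``paired against $A_{\hat q}(q-\hat q)$.'' The correct mechanism, which your ``repeated integration by parts'' only gestures at, is that each such term is an iterated antiderivative of $\cos(2\sqrt{\lambda}t)$, e.g.\ $\tfrac{\sin(2\sqrt{\lambda}x)}{2\sqrt{\lambda}}=\int_{0}^{x}\cos(2\sqrt{\lambda}t)\,dt$ and $\tfrac{1-\cos(2\sqrt{\lambda}x)}{4\lambda}=\int_{0}^{x}(x-t)\cos(2\sqrt{\lambda}t)\,dt$; it is exactly these identities that turn the $\lambda$-dependent coefficients into the $\lambda$-independent Volterra kernel $M_{\ell}(x,t)/t$, bounded away from the origin. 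Since the identity \eqref{l3.7} is nothing other than the assertion that such a kernel exists, deferring this computation to an unproven ``collapse'' means the proposal reduces the lemma to itself rather than proving it.
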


\begin{remark}\label{r3.2}
Suppose $\ell\in \mathbb{N}\cup\{0\}$ and $q$, $\hat{q}\in L^{p}(0, a)$, $p>1$.
If $\lambda_n \in \sigma(\ell, q, \beta_{n}) \cap \sigma(\ell, \hat{q}, \beta_{n})$ for $n \in\mathbb{N}$ are pairwise distinct and $\lambda_n \nrightarrow-\infty$, then
\begin{align}\label{e3.2}
\int_{0}^{a}(q(x)-\hat{q}(x))dx=0.
\end{align}
A detailed verification of this process can be found in \cite[proof of Theorem 1.2]{XYB2023}.
\end{remark}

\begin{remark}\label{r3.3}
We observe that for all $c\in(0,a)$,
$A_{\hat{q}}(f)(x)= 0$
is the Volterra integral equation on the interval $(c,a)$. Therefore, if $A_{\hat{q}}(f)(x)= 0$ a.e. on $(c,a)$, then $f=0$ a.e. on $(c,a)$. From the arbitrariness of $c$, it follows that if $A_{\hat{q}}(f)(x)= 0$ a.e. on $(0,a)$, then $f=0$ a.e. on $(0,a)$.  
\end{remark}

\begin{lemma}\label{l3.4}
Suppose $\ell\geq-1/2$ and $q$, $\hat{q}\in L^{1}(0, a)$. Then it holds:  
\begin{enumerate}[label=\textnormal{(\arabic*).}]
\item $H_{\ell}(\lambda)=0$ if and only if there exists $\beta \in \mathbb{R} \cup\{\infty\}$ such that
\begin{align*}
\lambda\in\sigma(\ell,q,\beta)\cap\sigma(\ell,\hat{q},\beta).
\end{align*}
\item If $\lambda \in \sigma(\ell, q, \beta) \cap \sigma(\ell, \hat{q}, \beta)$, then  $\zeta(\lambda,\beta, \hat{q})=\zeta(\lambda,\beta, q)$ if and only if 
\begin{align*}
\dot{H}_{\ell}(\lambda):=\frac{\partial H_{\ell}(\lambda)}{\partial \lambda}=0.
\end{align*}
\end{enumerate}
\end{lemma}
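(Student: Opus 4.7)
The plan is to exploit the determinant definition
$H_\ell(\lambda) = \phi_\ell(\lambda, 1, q)\phi_\ell'(\lambda, 1, \hat{q}) - \phi_\ell'(\lambda, 1, q)\phi_\ell(\lambda, 1, \hat{q})$
together with the explicit boundary-condition form of $\Delta(\lambda,\cdot)$ recalled in Section \ref{s2} and the normalization link $\psi_\ell(\lambda_n, x, \cdot) = \kappa(\lambda_n, \beta, \cdot)\phi_\ell(\lambda_n, x, \cdot)$ from \eqref{V1}.

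For part (1), I would first verify the ``if'' direction by cases on $\beta$. If $\beta \in \mathbb{R}$ and $\lambda$ is a common eigenvalue, then $\phi_\ell'(\lambda, 1, q) = -\beta\phi_\ell(\lambda, 1, q)$ and the analogous relation for $\hat{q}$ hold, so substituting into the expansion of $H_\ell$ immediately gives $H_\ell(\lambda) = 0$; if $\beta = \infty$, both $\phi_\ell(\lambda, 1, q)$ and $\phi_\ell(\lambda, 1, \hat{q})$ vanish, again yielding $H_\ell(\lambda) = 0$. For the converse, assume $H_\ell(\lambda) = 0$. If both $\phi_\ell(\lambda, 1, q) = 0$ and $\phi_\ell(\lambda, 1, \hat{q}) = 0$, then the choice $\beta = \infty$ works. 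If exactly one of them vanishes, the expansion collapses to a single term and forces the corresponding $\phi_\ell'$ to vanish too, which is impossible because a solution with zero Cauchy data at $x = 1$ would be identically zero, contradicting its prescribed nontrivial behavior at $x = 0$. Hence both boundary values are nonzero, and $H_\ell(\lambda) = 0$ yields $\phi_\ell'(\lambda, 1, q)/\phi_\ell(\lambda, 1, q) = \phi_\ell'(\lambda, 1, \hat{q})/\phi_\ell(\lambda, 1, \hat{q})$; defining $\beta \in \mathbb{R}$ as minus this common ratio shows that $\lambda \in \sigma(\ell, q, \beta) \cap \sigma(\ell, \hat{q}, \beta)$.

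For part (2), I would differentiate $H_\ell$ in $\lambda$ at a common eigenvalue $\lambda_0$ via the product rule and then plug in the boundary relations at $x = 1$ to regroup terms. For $\beta \in \mathbb{R}$ this produces the clean identity $\dot{H}_\ell(\lambda_0) = \phi_\ell(\lambda_0, 1, q)\,\dot{\Delta}(\lambda_0, \hat{q}) - \phi_\ell(\lambda_0, 1, \hat{q})\,\dot{\Delta}(\lambda_0, q)$. Using \eqref{V1} at $x = 1$ gives $\kappa(\lambda_0, \beta, q) = 1/\phi_\ell(\lambda_0, 1, q)$, and combining Lemma \ref{l2.2} with $\zeta(\lambda_0, \beta, q) = \kappa(\lambda_0, \beta, q)^2\tau(\lambda_0, q)$ yields $\dot{\Delta}(\lambda_0, q) = -\phi_\ell(\lambda_0, 1, q)\,\zeta(\lambda_0, \beta, q)$. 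Substituting the analogous formula for $\hat{q}$ gives $\dot{H}_\ell(\lambda_0) = \phi_\ell(\lambda_0, 1, q)\phi_\ell(\lambda_0, 1, \hat{q})\bigl[\zeta(\lambda_0, \beta, q) - \zeta(\lambda_0, \beta, \hat{q})\bigr]$, and since neither $\phi_\ell$-factor vanishes, the equivalence drops out. The case $\beta = \infty$ is parallel: the initial data $\psi_\ell(\lambda_0, 1, q) = 0,\ \psi_\ell'(\lambda_0, 1, q) = -1$ give $\kappa = -1/\phi_\ell'(\lambda_0, 1, q)$, and the same manipulation produces $\dot{H}_\ell(\lambda_0) = \phi_\ell'(\lambda_0, 1, q)\phi_\ell'(\lambda_0, 1, \hat{q})\bigl[\zeta(\lambda_0, \infty, q) - \zeta(\lambda_0, \infty, \hat{q})\bigr]$, with nonvanishing of $\phi_\ell'$ at $\lambda_0$ coming from the same IVP-uniqueness argument used in part (1).

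The main obstacle is the careful bookkeeping of the two normalization conventions, since $\phi_\ell$ is pinned at $x = 0$ while $\psi_\ell$ is pinned at $x = 1$; tracking $\kappa$ through both regimes and correctly deriving $\dot{\Delta}(\lambda_0, q) = -\phi_\ell(\lambda_0, 1, q)\,\zeta(\lambda_0, \beta, q)$ (with its $\beta = \infty$ analogue) is the step where signs and factors can easily slip. Once these relations are in place, both equivalences reduce to mechanical substitution.
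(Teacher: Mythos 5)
Your proof is correct and, for part (2), follows essentially the same route as the paper: differentiate the determinant at a common zero of the two characteristic functions, use Lemma \ref{l2.2} together with \eqref{V1} evaluated at $x=1$ to express $\dot{\Delta}(\lambda_0,\cdot)$ as a boundary value times $\zeta(\lambda_0,\beta,\cdot)$, and read off the equivalence --- your closed form $\dot H_\ell(\lambda_0)=\phi_\ell(\lambda_0,1,q)\,\phi_\ell(\lambda_0,1,\hat q)\bigl[\zeta(\lambda_0,\beta,q)-\zeta(\lambda_0,\beta,\hat q)\bigr]$ (and its $\beta=\infty$ analogue with $\phi_\ell'$ in place of $\phi_\ell$) is exactly the paper's identity \eqref{l3.9} with the $\kappa$'s rewritten as reciprocal boundary values via $\kappa=1/\phi_\ell(\lambda_0,1,q)$, resp.\ $\kappa=-1/\phi_\ell'(\lambda_0,1,q)$. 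For part (1) the paper only cites \cite[Lemma 3.1]{XYB2023}, whereas you supply the short self-contained case analysis; that argument is sound, including the key observation that vanishing Cauchy data at $x=1$ would force $\phi_\ell\equiv 0$, contradicting its prescribed behavior at $x=0$.
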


\begin{proof}
(1). The proof of the case where $\ell\in\mathbb{N}\cup\{0\}$ can be found in \cite[Lemma 3.1]{XYB2023}; the more general case $\ell\geq-1/2$ can be proved similarly.

(2). If $\beta=\infty$, $\lambda \in \sigma(\ell, q, \infty) \cap \sigma(\ell, \hat{q}, \infty)$, then
\begin{equation}\label{l3.9}
\begin{split}
\dot{H}_{\ell}(\lambda)
&=\frac{\partial(\phi_{\ell}(\cdot,1,q) \phi_{\ell}^{\prime}(\cdot,1,\hat{q})
-\phi_{\ell}(\cdot,1,\hat{q})\phi_{\ell}^{\prime}(\cdot,1,q))}
{\partial \lambda}(\lambda) \\
&=\phi_{\ell}^{\prime}(\cdot, 1, \hat{q}) 
\frac{\partial\phi_{\ell}(\cdot,1,q)}{\partial\lambda}(\lambda)
-\phi_{\ell}^{\prime}(\cdot, 1, q) 
\frac{\partial\phi_{\ell}(\cdot,1,\hat{q})}{\partial\lambda}(\lambda) \\
&=\phi_{\ell}^{\prime}(\lambda, 1, \hat{q})
\dot{\Delta}(\lambda, q)
-\phi_{\ell}^{\prime}(\lambda, 1, q)
\dot{\Delta}(\lambda, \hat{q}) \\
&=-\kappa(\lambda,\infty,q)\tau(\lambda,q)\phi_{\ell}^{\prime}(\lambda,1,\hat{q})
+\kappa(\lambda,\infty,\hat{q})\tau(\lambda,\hat{q})\phi_{\ell}^{\prime}(\lambda,1,q)\\
&=\frac{\kappa^2(\lambda,\infty,\hat{q})\tau(\lambda,\hat{q})
-\kappa^2(\lambda,\infty,q)\tau(\lambda,q)}
{\kappa(\lambda,\infty,q)\kappa(\lambda,\infty,\hat{q})},
\end{split}
\end{equation}
the fourth equality in \eqref{l3.9} holds by Lemma \ref{l2.2}, and the last equality in \eqref{l3.9} holds by the identity \eqref{V1}. 
From \eqref{V1}, we observe that
\begin{equation}\label{l3.10}
\begin{split}
\kappa^2(\lambda,\infty,q)\tau(\lambda,q)
=&\int_{0}^{1}|\kappa(\lambda,\infty,q)\phi_{\ell}(\lambda,x,q)|^{2}dx\\
=&\int_{0}^{1}|\psi_{\ell}(\lambda, x, q)|^{2}dx=\zeta(\lambda,\infty, q).
\end{split}
\end{equation}
From \eqref{l3.9} and \eqref{l3.10},  it follows that  $\dot{H}_{\ell}(\lambda)=0$ if and only if 
$\zeta(\lambda,\infty, \hat{q})=\zeta(\lambda,\infty,q)$.

The proof of the case $\beta \in \mathbb{R}$ is similar to the case $\beta=\infty$ and is omitted.
\end{proof}

\begin{lemma}\label{l3.6}
Suppose $\ell\geq-1/2$, $q$, $\hat{q}\in L^{1}(0, a)$ satisfying $q=\hat{q}$ a.e. on $(a, 1)$, for some $0<a\leq1$. Assume further: 

For $1\leq p<\infty$ and $k\in\mathbb{N}\cup\{0\}$, there exists $\delta_0>0$ such that $q-\hat{q}\in W^{k,p}((a-\delta_0, a])$, and
$(q-\hat{q})^{(i)}(a)=0$, for $i=0,1, \cdots, k-1$.
Then for any $\varepsilon>0$, there exists $\delta(\varepsilon)>0$ such that
\begin{equation}\label{e3.7}
|H_{\ell}(z^2)| 
\leq \frac{e^{2a|\textnormal{Im}z|}}
{|z|^{2\ell+2}|\textnormal{Im}z|^{k+\frac{1}{p^{\prime}}}}
\left(\varepsilon
+C|z|^{\gamma}e^{-\delta(\varepsilon)|\textnormal{Im}z|}
+C|\textnormal{Im}z|^{k+\frac{1}{p^{\prime}}}
e^{-2\delta(\varepsilon)|\textnormal{Im}z|}\right), 
\end{equation}
where $\textnormal{Im}z \neq 0$,
and the constants $C>0$ and $0<\gamma<1$ are independent of $\varepsilon, \delta$ and $z$.

For $p=\infty$, if $q-\hat{q} \in C^{k}((a-\delta_0, a])$ and $(q-\hat{q})^{(i)}(a)=0$ for $i=0, \cdots, k$, then \eqref{e3.7} holds.
\end{lemma}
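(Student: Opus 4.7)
The plan is to begin from the representation \eqref{e3.1},
\[
H_\ell(z^2)=\int_0^a(q-\hat q)(x)\,\phi_\ell(z^2,x,q)\,\phi_\ell(z^2,x,\hat q)\,dx,
\]
and to insert the asymptotic formula \eqref{e1.5}. Writing $\phi_\ell(z^2,x,q)=z^{-(\ell+1)}\bigl[\sin(zx-\ell\pi/2)+E_q(z,x)\bigr]$ with $|E_q(z,x)|\le C(\textnormal{R}(z^2)+|z|^{-1})e^{x|\textnormal{Im}z|}$, and analogously for $\hat q$, multiplying out gives
\[
\phi_\ell(z^2,x,q)\,\phi_\ell(z^2,x,\hat q)=z^{-2\ell-2}\bigl[\sin^2(zx-\ell\pi/2)+\mathcal E(z,x)\bigr],
\]
with $|\mathcal E(z,x)|\le C(\textnormal{R}(z^2)+|z|^{-1})e^{2x|\textnormal{Im}z|}$. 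Rewriting $\sin^2\theta=\tfrac12(1-\cos2\theta)$ and expanding the cosine in complex exponentials, the growth $e^{2a|\textnormal{Im}z|}$ arises from exactly one of the two terms $e^{\mp 2izx}$ (the sign dictated by $\textnormal{sgn}(\textnormal{Im}z)$); the other exponential and the constant $\tfrac12$ contribute terms without exponential growth that are trivially absorbed.

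The next step is to split the $x$-integral at $a-\delta$, with $\delta=\delta(\varepsilon)>0$ to be chosen small (and $\delta<\delta_0$). On the outer subinterval $(0,a-\delta)$, the pointwise bound $|\phi_\ell\phi_\ell|\le C|z|^{-2\ell-2}e^{-2\delta|\textnormal{Im}z|}e^{2a|\textnormal{Im}z|}$ combined with $\|q-\hat q\|_{L^1(0,1)}<\infty$ yields, after factoring out the prefactor $e^{2a|\textnormal{Im}z|}/(|z|^{2\ell+2}|\textnormal{Im}z|^{k+1/p'})$, precisely the third summand $C|\textnormal{Im}z|^{k+1/p'}e^{-2\delta|\textnormal{Im}z|}$. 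On the inner subinterval $(a-\delta,a)$ I would use the regularity of $f:=q-\hat q$ to integrate the oscillatory integral $\int_{a-\delta}^{a}f(x)e^{\mp 2izx}dx$ by parts $k$ times. The vanishing conditions $f^{(i)}(a)=0$ for $0\le i\le k-1$ (respectively $0\le i\le k$ when $p=\infty$) kill the boundary contributions at $a$, while those at $a-\delta$ carry an $e^{-2\delta|\textnormal{Im}z|}$ factor and are absorbed into the third summand. The remaining integral
\[
\frac{1}{(2iz)^k}\int_{a-\delta}^{a}f^{(k)}(x)\,e^{\mp 2izx}\,dx
\]
is estimated by H\"older's inequality with exponents $p$ and $p'$, using the elementary computation $\bigl(\int_{a-\delta}^{a}e^{2p'x|\textnormal{Im}z|}dx\bigr)^{1/p'}\le Ce^{2a|\textnormal{Im}z|}/|\textnormal{Im}z|^{1/p'}$ (with the conventions $1/p'=0$ when $p=1$ and $1/p'=1$ when $p=\infty$). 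Combined with $|z|^{-k}\le|\textnormal{Im}z|^{-k}$ (from $|z|\ge|\textnormal{Im}z|$), this yields the prefactor of \eqref{e3.7} multiplied by $C\|f^{(k)}\|_{L^p(a-\delta,a)}$. Absolute continuity of the Lebesgue integral (or the continuity of $f^{(k)}$ at $a$ with value $0$ in the $p=\infty$ case) then lets us choose $\delta=\delta(\varepsilon)$ so small that this norm is $<\varepsilon$, producing the $\varepsilon$-summand.

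The main obstacle will be the remainder $\mathcal E(z,x)$. A priori its contribution to $H_\ell(z^2)$ is of order $|z|^{-2\ell-3}e^{2a|\textnormal{Im}z|}$ (one uses $\textnormal{R}(z^2)=O(|z|^{-1})$ for $q,\hat q\in L^1(0,1)$, which follows from splitting the defining integral in \eqref{e1.7} at $y=|z|^{-1}$), and this does not by itself fit the prescribed second summand $C|z|^\gamma e^{-\delta|\textnormal{Im}z|}/(|z|^{2\ell+2}|\textnormal{Im}z|^{k+1/p'})$. I would deal with it by the same split at $a-\delta$: the outer portion inherits the factor $e^{-2\delta|\textnormal{Im}z|}$ and is absorbed into the third summand, while the inner portion is treated by a further integration by parts against its oscillatory factor, exploiting the smoothness of $f$ to give the $|z|^\gamma$ growth needed, any $\gamma\in(0,1)$ being admissible. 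The sign case-analysis according to $\textnormal{sgn}(\textnormal{Im}z)$ and the tracking of all constants make up the principal bookkeeping effort; the case $p=\infty$ is structurally identical, only the role of absolute continuity being played instead by the continuity of $f^{(k)}$ at $a$ together with the value $f^{(k)}(a)=0$.
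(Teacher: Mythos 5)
There is a genuine gap, and it sits exactly where you flagged ``the main obstacle'': the remainder $\mathcal{E}(z,x)$ on the inner interval $(a-\delta,a)$. Your plan handles the explicit main term by integrating $e^{\mp 2izx}$ by parts $k$ times, but $\mathcal{E}$ is known only through the modulus bound $|\mathcal{E}|\le C|z|^{\gamma-1}e^{2x|\mathrm{Im}\,z|}$; it has no identified oscillatory factor to integrate against, so ``a further integration by parts against its oscillatory factor'' is not an executable step. A direct modulus bound of $z^{-2\ell-2}\int_{a-\delta}^a f\,\mathcal{E}\,dx$ gives only $C|z|^{\gamma-1}e^{2a|\mathrm{Im}\,z|}|z|^{-2\ell-2}$, which carries no factor $|\mathrm{Im}\,z|^{-(k+1/p')}$ and no factor $e^{-\delta|\mathrm{Im}\,z|}$, and hence fits none of the three summands of \eqref{e3.7} once $k\ge 1$ (or $p>1$). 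The paper's proof avoids this by \emph{not} separating the main term from the remainder: writing the whole product kernel as one function $f_0$ with $|f_0(x)|=O(e^{2x|\mathrm{Im}\,z|})$, it integrates by parts against the iterated primitives $f_{i+1}(x)=\int_{a-\delta}^x f_i(t)\,dt$, which satisfy $|f_i(x)|=O(e^{2x|\mathrm{Im}\,z|}/|\mathrm{Im}\,z|^{i})$ by the modulus bound alone (see \eqref{l3.14}). This gains the needed factor $|\mathrm{Im}\,z|^{-k}$ without any oscillation, and the boundary terms at $a-\delta$ vanish identically because $f_i(a-\delta)=0$ by construction — which also removes the secondary defect in your scheme that the boundary contributions $f^{(j)}(a-\delta)$ depend on $\delta=\delta(\varepsilon)$, whereas the constant $C$ in \eqref{e3.7} must be independent of $\varepsilon$ and $\delta$. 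Relatedly, in the paper the second summand $C|z|^{\gamma}e^{-\delta|\mathrm{Im}\,z|}$ is not produced by the remainder at all; it comes from a boundary term $g(z)=O(|z|^{\gamma}e^{2x_0|\mathrm{Im}\,z|})$ at an auxiliary point $x_0$, and $\gamma$ is the specific exponent dictated by the error in \eqref{e1.5}, not an arbitrary number in $(0,1)$.

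A second, more repairable issue: you insert the asymptotic formula \eqref{e1.5} into \eqref{e3.1} over all of $(0,a)$, but that formula is uniform only on $(x_0,1)$ for fixed $x_0>0$; near $x=0$ the solution behaves like $x^{\ell+1}$ and the sine approximation (with the stated error) fails. The paper sidesteps $(0,x_0)$ entirely by converting $\int_0^{x_0}(q-\hat q)\phi_\ell(\cdot,q)\phi_\ell(\cdot,\hat q)\,dx$ into the Wronskian $\phi_\ell(z^2,x_0,q)\phi_\ell'(z^2,x_0,\hat q)-\phi_\ell(z^2,x_0,\hat q)\phi_\ell'(z^2,x_0,q)$ via the Lagrange identity (equation \eqref{l3.13}), which only requires the asymptotics at the single point $x_0$. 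Your outer-interval estimate can be salvaged because the crude global bound $|\phi_\ell(z^2,x,q)|\le C|z|^{-\ell-1}e^{x|\mathrm{Im}\,z|}$ does hold down to $x=0$ (this is in \cite{KST2010}, not in \eqref{e1.5} as quoted), but you would need to invoke that bound explicitly rather than \eqref{e1.5}. The parts of your argument on $(a-\delta,a)$ concerning the main term — the $k$-fold integration by parts, the vanishing boundary terms at $a$, the H\"older step producing $|\mathrm{Im}\,z|^{-1/p'}$, and the choice of $\delta(\varepsilon)$ by absolute continuity — do agree with the paper.
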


\begin{proof}
Fix $x_0\in(0, a-\delta_0)$.
By the asymptotic estimates \eqref{e1.5} and \eqref{e1.6}, we can choose a constant $0<\gamma<1$ such that 
\begin{equation}\label{l3.11}
\phi_{\ell}(z^2, x, q)
=z^{-\ell-1}\left(\sin \left(z x-\frac{\ell\pi}{2}\right)
+O(|z|^{\gamma-1}e^{x|\textnormal{Im}z|})\right),
\end{equation}
and
\begin{equation}\label{l3.12}
\phi_{\ell}^{\prime}(z^2, x, q)
=z^{-\ell}
\left(\cos \left(zx-\frac{\ell \pi}{2}\right)+O\left(|z|^{\gamma-1} e^{x|\textnormal{Im}(z)|}\right)\right),
\end{equation}
uniformly in $x\in[x_0,1]$. The identity \eqref{e3.1} can be rewritten as
\begin{equation}\label{l3.13}
\begin{split}
H_{\ell}(z^2)=&\left(\int_{0}^{x_0}+\int_{x_0}^{a}\right)(q-\hat{q})(x)\phi_{\ell}(\lambda, x, q) \phi_{\ell}(\lambda, x, \hat{q})dx\\
=&\phi_{\ell}(z^2,x_0,q)\phi_{\ell}^{\prime}(z^2,x_0,\hat{q})-\phi_{\ell}(z^2,x_0,\hat{q})\phi_{\ell}^{\prime}(z^2,x_0,q)\\
&+\int_{x_0}^{a}(q-\hat{q})(x)\phi_{\ell}(\lambda, x, q) \phi_{\ell}(\lambda, x, \hat{q})dx.
\end{split}
\end{equation}
Putting the estimates \eqref{l3.11} and \eqref{l3.12} into the identity \eqref{l3.13}, for $\delta\in(0,\delta_{0})$, one has
\begin{align}\label{e3.3}
\begin{split}
H_{\ell}(z^2)= &z^{-2 \ell-2}\int_{x_0}^{a}(q-\hat{q})(x)f_0(x)dx+z^{-2\ell-2}g(z)\\
= &z^{-2 \ell-2}\left(\int_{x_0}^{a-\delta}+\int_{a-\delta}^{a}\right)
(q-\hat{q})(x)f_0(x)dx+z^{-2\ell-2}g(z),
\end{split}
\end{align}
where $f_0(x)=O\left(e^{2x|\textnormal{Im}z|}\right)$ uniformly in $x\in[x_0,a]$, and $g(z)=O\left(|z|^{\gamma}e^{2x_0|\textnormal{Im}z|}\right)$, 
for $|z|\rightarrow\infty$. 

On the one hand, 
since $q-\hat{q}\in W^{k, p}(a-\delta_0, a]$ and $(q-\hat{q})^{(i)}(a)=0$ for $i=0, \cdots, k-1$, and $1\leq p<\infty$, 
by performing $k$ iterations of integration by parts, we have 
\begin{equation*}
\begin{split}
\int_{a-\delta}^a(q-\hat{q})(x)f_{0}(x)dx=&(-1)\int_{a-\delta}^a(q-\hat{q})^{\prime}(x)f_{1}(x)dx=\cdots\\
=&(-1)^{k}\int_{a-\delta}^a(q-\hat{q})^{(k)}(x)f_{k}(x)dx,
\end{split}
\end{equation*}
where 
\begin{align*}
f_{i+1}(x)=\int_{a-\delta}^{x} f_i(t)dt.
\end{align*}
We see by induction on $i$ that
\begin{equation}\label{l3.14}
f_i(x)
=O\left(\frac{e^{2|\textnormal{Im}z|x}}
{|\textnormal{Im}z|^{i}}\right)
\end{equation}
uniformly in $x\in[a-\delta,a]$. Note that H\"{o}lder's inequality gives 
\begin{equation}\label{l3.15}
\int_{a-\delta}^{a}
\left|(q-\hat{q})^{(k)}(x)\right|e^{2|\textnormal{Im}z|x}dx \leq C_1\|(q-\hat{q})^{(k)}\|_{L^{p}(a-\delta, a)}
\frac{e^{2|\textnormal{Im}z|a}}{|\textnormal{Im}z|^{1/p^{\prime}}},
\end{equation}
for some constant $C_1>0$. For small $\delta$, the $L^{p}$-norm of $(q-\hat{q})^{(k)}$ is small. 
The facts \eqref{l3.14} and \eqref{l3.15} show that for small $\delta$, we have 
\begin{equation}\label{e3.4}
\left|\int_{a-\delta}^{a}(q-\hat{q})(x)f_{0}(x)dx\right| 
\leq \varepsilon \frac{e^{2|\textnormal{Im}z|a}}{|\textnormal{Im}z|^{k+1/p^{\prime}}}.
\end{equation}
From the additional information, one sees that the inequality \eqref{e3.4} also holds for $p=\infty$.

On the other hand, 
\begin{equation}\label{e3.5}
\left|\int_{x_0}^{a-\delta}(q-\hat{q})(x)f_{0}(x)dx\right| 
\leq C_{2}e^{2|\textnormal{Im}z|(a-\delta)},
\end{equation}
for some constant $C_2>0$. Moreover, the function $g(z)$ in \eqref{e3.3} has the following estimate:
\begin{equation}\label{e3.6}
 |g(z)|
\leq C_{3}
\frac{e^{2|\textnormal{Im}z| a}}{|\textnormal{Im}z|^{k+1/p^{\prime}}} |z|^{\gamma}e^{-(a-x_0)|\textnormal{Im}z|}
\leq C_{3} 
\frac{e^{2|\textnormal{Im}z|a}}{|\textnormal{Im}z|^{k+1/p^{\prime}}} |z|^{\gamma}e^{-\delta|\textnormal{Im}z|},
\end{equation}
for some constant $C_3>0$. By using \eqref{e3.3}, \eqref{e3.4}, \eqref{e3.5} and \eqref{e3.6}, we complete the proof.
\end{proof}

\section{Proofs of Theorems \ref{t4.1}--\ref{t4.3}}\label{s4}

Before proving the main Theorems \ref{t4.1}--\ref{t4.3}, we first present a lemma concerning the derivatives of $F(\lambda)$.

\begin{lemma}\label{l4.4}
Given $f \in L^1(0,1)$, $a \in(0,1)$, define
\begin{align*}
F(\lambda)=\int_{0}^{a}\cos(2\sqrt{\lambda}x)f(x)dx.
\end{align*}
Then
\begin{align*}
F^{\prime}(\lambda)=-\int_{0}^{a} \sin(2\sqrt{\lambda} x) \frac{x}{\sqrt{\lambda}}f(x)dx,~~
\end{align*}
and 
\begin{align*}
F^{(k)}(0)=(-1)^k \frac{4^k(k)!}{(2k)!}
\int_{0}^{a}x^{2k}f(x)dx, 
~~k=0,1,2, \cdots.
\end{align*}
\end{lemma}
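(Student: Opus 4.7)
The plan is to exploit the fact that, although $\sqrt{\lambda}$ is not single-valued, the function $\cos(2\sqrt{\lambda}\,x)$ depends only on $\lambda$ (because $\cos$ is even), and is in fact an entire function of $\lambda$ for each fixed $x$. Specifically, I would start from the power-series identity
\begin{align*}
\cos(2\sqrt{\lambda}\,x)
=\sum_{k=0}^{\infty}\frac{(-1)^{k}\,4^{k}\,x^{2k}}{(2k)!}\,\lambda^{k},
\end{align*}
which converges absolutely and uniformly on $x\in[0,a]$ for any bounded set of $\lambda\in\mathbb{C}$. Since $f\in L^{1}(0,1)$ and $a\le 1$, the series' partial sums are dominated (uniformly in $x\in[0,a]$) by the convergent majorant $|f(x)|\cdot e^{2a\sqrt{|\lambda|}}$, so dominated convergence justifies interchanging sum and integral.

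This interchange yields the entire representation
\begin{align*}
F(\lambda)=\sum_{k=0}^{\infty}\frac{(-1)^{k}\,4^{k}}{(2k)!}\Bigl(\int_{0}^{a}x^{2k}f(x)\,dx\Bigr)\lambda^{k}.
\end{align*}
Reading off Taylor coefficients at $\lambda=0$ gives $F^{(k)}(0)/k!$ equal to the $k$-th coefficient, which rearranges exactly to the claimed formula $F^{(k)}(0)=(-1)^{k}\,4^{k}\,k!/(2k)!\cdot\int_{0}^{a}x^{2k}f(x)\,dx$. This handles the second identity.

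For the first identity, I would differentiate under the integral sign on the punctured plane $\lambda\in\mathbb{C}\setminus\{0\}$, where
\begin{align*}
\frac{\partial}{\partial\lambda}\cos(2\sqrt{\lambda}\,x)
=-\frac{x}{\sqrt{\lambda}}\sin(2\sqrt{\lambda}\,x).
\end{align*}
On any compact subset of $\mathbb{C}\setminus\{0\}$ the right-hand side is bounded by a constant times $|f(x)|$ for $x\in[0,a]$, so the standard differentiation-under-the-integral criterion applies. Alternatively, one can term-by-term differentiate the Taylor series from the previous paragraph and verify that the resulting power series equals $-\int_{0}^{a}\sin(2\sqrt{\lambda}x)(x/\sqrt{\lambda})f(x)\,dx$ by expanding $\sin(2\sqrt{\lambda}x)/\sqrt{\lambda}=\sum_{k\ge 0}(-1)^{k}2^{2k+1}x^{2k+1}\lambda^{k}/(2k+1)!$, matching coefficients of $\lambda^{j}$ on both sides; this is a one-line index shift.

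The main (and only) obstacle is the technical justification of the interchanges of limits, which is minor because $a\le 1$ and $f\in L^{1}(0,a)$ supply trivial uniform bounds on compact $\lambda$-sets; no deeper estimate on $f$ is needed. No Bessel-operator machinery from the earlier sections enters the argument, so the lemma is self-contained and the proof is essentially just the power-series computation above.
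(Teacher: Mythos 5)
Your proof is correct, and since the paper explicitly omits the proof of Lemma \ref{l4.4} as ``straightforward,'' your power-series expansion of $\cos(2\sqrt{\lambda}\,x)$ in $\lambda$, followed by termwise integration and reading off Taylor coefficients, is exactly the standard computation the authors intend. The coefficient bookkeeping (including the index shift verifying $F'$) checks out, and the $L^{1}$ majorant $|f(x)|e^{2a\sqrt{|\lambda|}}$ suffices to justify the interchange.
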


The proof of Lemma \ref{l4.4} is straightforward, so we omit it. We are now ready to prove Theorem \ref{t4.1}.

\begin{proof}[Proof of Theorem \ref{t4.1}] 
We first consider the case $0 \notin \Lambda$. 
Suppose that there exists another potential $\hat{q} \in L^p(0,1)$ such that $q(x)=\hat{q}(x)$, a.e. on $(a, 1)$, 
$\lambda_n \in \sigma(\ell, q, \beta_n) 
\cap \sigma(\ell, \hat{q}, \beta_n)$ 
for $\lambda_n \in \Lambda$, and 
$\zeta(\lambda_{n},\beta_{n}, q)$=$\zeta(\lambda_{n},\beta_{n}, \hat{q})$
for $\lambda_n \in S$.

If the values $\lambda_{n}$ have a finite accumulation point, then the entire function $\lambda^{\ell+1} H_{\ell}(\lambda) \equiv 0$. Moreover, the expression of $\lambda^{\ell+1} H_{\ell}(\lambda)$ in \eqref{l3.7} gives 
$$\int_{0}^{a}(q(x)-\hat{q}(x))dx=0,$$
which implies that 
\begin{align*}
\int_{0}^{a}\cos(2\sqrt{\lambda}x)(A_{\hat{q}}(q-\hat{q})(x))dx\equiv0,\;\lambda \in \mathbb{C}.
\end{align*}
Therefore, one has $A_{\hat{q}}(q-\hat{q})(x)\equiv0$, which, together with Remark \ref{r3.3}, implies that $q(x)=\hat{q}(x)$ a.e. on $(0, a)$.

If the values $\lambda_n$ have no finite accumulation point, by Lemma \ref{l3.1} and Remark \ref{r3.2}, we have
\begin{equation}\label{e4.12}
\lambda^{\ell+1}H_{\ell}(\lambda)
=\frac{1}{2}\int_{0}^{a} 
\cos(2\sqrt{\lambda}x)(A_{\hat{q}}(q-\hat{q})(x))dx,
\end{equation}
where $A_{\hat{q}}(q-\hat{q})\in L^{p}(0,a)$. 
Substituting $\lambda=\lambda_n$ into the identity \eqref{e4.12}, we obtain 
\begin{equation}\label{e4.13}
\int_{0}^{a}\cos(2\sqrt{\lambda_{n}}x)(A_{\hat{q}}(q-\hat{q})(x))dx=0,\; \lambda_n \in \Lambda.  
\end{equation}
From Lemmas \ref{l3.4} and \ref{l4.4}, together with the identity \eqref{e4.12}, one has 
\begin{equation}\label{e4.14}
\int_{0}^{a}x\sin(2\sqrt{\lambda_n}x)(A_{\hat{q}}(q-\hat{q})(x))dx=0,\; \lambda_n \in S, 
\end{equation}
and
\begin{align}\label{e4.1}
\int_{0}^{a}x^{2 k}(A_{\hat{q}}(q-\hat{q})(x))dx=0,~~k=0,1,2, \cdots, \ell.
\end{align}
Since the system $S_{\ell}(\Lambda, S)$ is closed in $L^p(0, a)$, the facts \eqref{e4.13}--\eqref{e4.1} give $A_{\hat{q}}(q-\hat{q})(x)=0$ a.e. on $(0,a)$. 
Then Remark \ref{r3.3} implies $q(x)=\hat{q}(x)$ a.e. on $(0, a)$.

For the case $0 \in \Lambda$ and $0 \notin S$ (or the case $0 \in S$), the only difference in the proof is that 0 is a zero of $\lambda^{\ell+1}H_{\ell}(\lambda)$ with  multiplicity $\ell+2$ (or $\ell+3$). Thus, the equality \eqref{e4.1} is changed to
\begin{gather*}
\int_{0}^{a} x^{2k}(A_{\hat{q}}(q-\hat{q})(x))dx=0,
~~k=0,1,2, \cdots, \ell+1,\\
(\text{or}~~\int_{0}^{a} x^{2k}(A_{\hat{q}}(q-\hat{q})(x))dx=0, 
~~k=0,1,2, \cdots, \ell+2,)
\end{gather*}
and the other parts of the proof remain the same. This completes the proof.
\end{proof}

In order to prove Theorem \ref{t4.2}, we need the following auxiliary lemma.
\begin{lemma}\cite[Lemma 2.1]{Horv2005}\label{l4.5}
Let $B_1$ and $B_2$ be Banach spaces.  Suppose for every $q \in B_1$, a continuous linear operator
\begin{align*}
A_q: B_1 \rightarrow B_2
\end{align*}
is defined so that for some $q_0 \in B_1$, 
\begin{align*}
A_{q_0}: B_1 \rightarrow B_2
\end{align*}
is an isomorphism, and the mapping $q \rightarrow A_q$ is Lipschitzian in the sense that
\begin{align}\label{e4.2}
\|(A_{q_1}-A_{q_2})h\| 
\leq c(q_0)\|q_1-q_2\|\|h\|,
\end{align}
for all $h$, $q_1$, $q_2\in B_1$ and $\|q_1\|$, $\|q_2\| \leq 2\|q_0\|$,
where the constant $c(q_0)$ is independent of $h, q_1, q_2$. Then the set $\{A_{q_1}(q_1-q_0)\mid q_1\in B_1\}$ contains a ball in $B_2$ with center at the origin.
\end{lemma}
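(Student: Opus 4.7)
The plan is to prove Lemma~\ref{l4.5} by a Banach contraction argument, which serves as the Lipschitz substitute for the classical inverse function theorem. Define $F:B_{1}\to B_{2}$ by $F(q_{1}):=A_{q_{1}}(q_{1}-q_{0})$, so that $F(q_{0})=0$; the goal is to show that the image of $F$ contains a closed ball $\{y\in B_{2}:\|y\|\le\rho\}$ for some $\rho>0$. Writing $q_{1}=q_{0}+h$ and splitting off the invertible leading term, the equation $F(q_{1})=y$ becomes $A_{q_{0}}h+(A_{q_{0}+h}-A_{q_{0}})h=y$, and applying $A_{q_{0}}^{-1}$ turns this into the fixed-point equation $h=T(h)$ with
\begin{equation*}
T(h):=A_{q_{0}}^{-1}y-A_{q_{0}}^{-1}(A_{q_{0}+h}-A_{q_{0}})h.
\end{equation*}

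Next I would restrict $T$ to the closed ball $\{h\in B_{1}:\|h\|\le r\}$, choosing $r\le\|q_{0}\|$ so that $\|q_{0}+h\|\le 2\|q_{0}\|$ for every $h$ under consideration; this places $q_{0}+h$ and $q_{0}$ inside the region where the Lipschitz bound \eqref{e4.2} applies. Taking $(q_{1},q_{2})=(q_{0}+h,q_{0})$ in \eqref{e4.2} yields the quadratic estimate $\|(A_{q_{0}+h}-A_{q_{0}})h\|\le c(q_{0})\|h\|^{2}$, and consequently
\begin{equation*}
\|T(h)\|\le\|A_{q_{0}}^{-1}\|\bigl(\|y\|+c(q_{0})r^{2}\bigr).
\end{equation*}
For the contraction estimate I would use the algebraic decomposition
\begin{equation*}
(A_{q_{0}+h_{1}}-A_{q_{0}})h_{1}-(A_{q_{0}+h_{2}}-A_{q_{0}})h_{2}=(A_{q_{0}+h_{1}}-A_{q_{0}+h_{2}})h_{1}+(A_{q_{0}+h_{2}}-A_{q_{0}})(h_{1}-h_{2}),
\end{equation*}
and apply \eqref{e4.2} to each summand, obtaining
\begin{equation*}
\|T(h_{1})-T(h_{2})\|\le 2c(q_{0})\|A_{q_{0}}^{-1}\|\,r\,\|h_{1}-h_{2}\|.
\end{equation*}

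The final step is calibration: pick $r>0$ small enough that $2c(q_{0})\|A_{q_{0}}^{-1}\|r\le 1/2$, then set $\rho:=r/(2\|A_{q_{0}}^{-1}\|)-c(q_{0})r^{2}/2$ (positive after possibly shrinking $r$ further). For any $y$ with $\|y\|\le\rho$, the map $T$ sends the closed $r$-ball into itself and contracts with ratio $1/2$, so Banach's fixed-point theorem produces a unique $h$ in the ball with $T(h)=h$. Setting $q_{1}:=q_{0}+h$, this $h$ satisfies $F(q_{1})=y$, so $\{A_{q_{1}}(q_{1}-q_{0}):q_{1}\in B_{1}\}$ contains the $\rho$-ball in $B_{2}$.

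The main obstacle is conceptual rather than computational: since $q\mapsto A_{q}$ is only Lipschitz and not (necessarily) differentiable, the classical inverse function theorem is not available, so one must verify by hand that the correction $(A_{q_{0}+h}-A_{q_{0}})h$ is genuinely quadratic in $\|h\|$ and that its Lipschitz dependence on $h$ is of order $r$. Once the algebraic decomposition above is in place, the remaining calibration of $r$ and $\rho$ is routine linear bookkeeping; a minor side issue is ensuring the working ball stays inside the region $\|q\|\le 2\|q_{0}\|$ where \eqref{e4.2} is valid, which is precisely why we took $r\le\|q_{0}\|$.
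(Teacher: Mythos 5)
Your argument is correct, and since the paper only quotes this lemma with a citation to Horv\'{a}th's work rather than proving it, the natural comparison is with the cited source, whose proof is essentially the same contraction-mapping scheme: reduce $A_{q_0+h}h=y$ to the fixed-point equation $h=A_{q_0}^{-1}y-A_{q_0}^{-1}(A_{q_0+h}-A_{q_0})h$ on a small ball where the Lipschitz hypothesis applies. Your decomposition, the quadratic bound $\|(A_{q_0+h}-A_{q_0})h\|\le c(q_0)\|h\|^{2}$, the contraction estimate, and the calibration of $r$ and $\rho$ all check out.
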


For our purposes, let $B_1=B_2=L^p(0, a)$. The continuous linear operator $A_{\hat{q}}$ defined in \eqref{l3.8} satisfies the Lipschitzian condition \eqref{e4.2} (see \cite[Lemma 6.1]{XYB2023}). In order to apply the conclusion of Lemma \ref{l4.5}, we assume that the continuous linear operator $A_q$ is an isomorphism. Next, we proceed to prove  Theorem \ref{t4.2}. 

\begin{proof}[Proof of Theorem \ref{t4.2}]
We first consider the case $0 \notin \Lambda$. 
Suppose $q \in L^p(0,1)$. We will prove that there exists a different $\hat{q} \in L^p(0,1)$, with $\hat{q}=q$ a.e. on $(a, 1)$, such that for the given sets $\Lambda$ and $S$, there exist boundary parameters $\{\beta_n\}$ for which $\lambda_n \in \sigma(\ell, q, \beta_n) \cap \sigma(\ell, \hat{q}, \beta_n)$ for $\lambda_n \in \Lambda$, and $\zeta(\lambda_{n},\beta_{n}, q) = \zeta(\lambda_{n},\beta_{n}, \hat{q})$ for $\lambda_n \in S$.

Since the set $S_{\ell}(\Lambda, S)$ is not closed in $L^p(0, a)$, then there exists $h \in L^p(0, a)$, $h \neq 0$, such that
\begin{align}
\int_{0}^{a}h(x)\cos(2\sqrt{\lambda_n}x)dx=0, 
~~\lambda_n \in \Lambda, \label{e4.3}\\
\int_{0}^{a}h(x)x\sin(2\sqrt{\lambda_n}x)dx=0, 
~~\lambda_n \in S, \label{e4.4}\\
\int_{0}^{a}h(x)x^{2k}dx=0, 
~~k=0,1,2, \cdots, \ell. \label{e4.5}
\end{align}
By Lemma \ref{l4.5}, there exists a sufficiently small constant $\eta$ with $\eta>0$, such that $\eta h \in \{A_{q_1}(q_1-q)\mid q_1 \in L^p(0, a)\}$. Consequently, there exists $\hat{q} \in L^p(0, a)$, such that
\begin{align}\label{e4.6}
A_{\hat{q}}(\hat{q}-q)=\eta h.
\end{align}
Since $h\neq 0$, one has $q-\hat{q}\neq 0$.

The equalities \eqref{l3.7} and \eqref{e4.6} give 
\begin{align}\label{e4.8}
\lambda^{\ell+1} H_{\ell}(\lambda)
=\frac{1}{2}\int_{0}^{a}(q-\hat{q})(x)dx
+\frac{1}{2}\eta\int_{0}^{a}h(x)\cos(2\sqrt{\lambda}x)dx.
\end{align}
Putting $\lambda=0$ into \eqref{e4.8} and considering \eqref{e4.5}, we obtain 
\begin{align*}
\int_{0}^{a}(q-\hat{q})(x)dx=-\eta\int_{0}^{a}h(x)dx=0.
\end{align*}
 Then the equality \eqref{e4.8} becomes 
\begin{align}\label{e4.9}
\lambda^{\ell+1} H_{\ell}(\lambda)
=\frac{1}{2}\eta\int_{0}^{a}h(x)\cos(2\sqrt{\lambda}x)dx.
\end{align}

The equalities \eqref{e4.5} for $k=0,1,2, \cdots, \ell$, guarantee that the function $\lambda^{\ell+1}H_{\ell}(\lambda)$ has a zero $\lambda=0$ with multiplicity $\ell+1$. 
The equalities \eqref{e4.3} and \eqref{e4.4} imply that 
\begin{align*}
\lambda_n^{\ell+1} H_{\ell}(\lambda_n)
=\frac{1}{2}\eta\int_{0}^{a}h(x)\cos(2\sqrt{\lambda_n}x)dx=0, ~~\lambda_n \in \Lambda, \\
\lambda_n^{\ell+1}\dot{H}_{\ell}(\lambda_n)
=-\frac{1}{2}\eta\int_{0}^{a}h(x) \frac{x\sin(2\sqrt{\lambda_n}x)}{\sqrt{\lambda_n}}dx=0, 
~~\lambda_n \in S .
\end{align*}
Consequently, there exists a $\beta_n$ such that $\lambda_n \in \sigma(\ell, q, \beta_n) \cap \sigma(\ell, \hat{q}, \beta_n)$ if $\lambda_n \in \Lambda$, and
$\zeta(\lambda_{n},\beta_{n}, q)$=$\zeta(\lambda_{n},\beta_{n}, \hat{q})$
if $\lambda_n \in S$. 
So the proof for the case $0 \notin \Lambda$ is finished.

Now, we suppose $0 \in \Lambda$ and $0 \notin S$. The only difference from the case $0 \notin \Lambda$ is that we should additionally prove 
$0\in\sigma(\ell,q,\beta)\cap\sigma(\ell,\hat{q},\beta)$ for certain $\beta\in\mathbb{R}\cup\{\infty\}$.

Since $S_{\ell}(\Lambda, S)$ is not closed, we choose $h \neq 0$ in $L^p(0, a)$ such that \eqref{e4.3}--\eqref{e4.4} hold and
\begin{align}\label{e4.10}
\int_{0}^{a} x^{2k}h(x)dx=0, 
~~k=0,1,2, \cdots, \ell+1.
\end{align}
All other steps are the same as in the proof of the case $0 \notin \Lambda$. The equality \eqref{e4.10} gives that $0$ is a zero of $\lambda^{\ell+1}H_{\ell}(\lambda)$ with multiplicity $\ell+2$, which implies $H_{\ell}(0)=0$, that is,  
$0\in\sigma(\ell,q,\beta)\cap\sigma(\ell,\hat{q},\beta)$ for certain $\beta\in\mathbb{R}\cup\{\infty\}$.

Finally, suppose $0\in S$. The only difference from other cases is that we should additionally prove $0\in \sigma(\ell, q, \beta) \cap \sigma(\ell, \hat{q}, \beta)$ and $\zeta(0,\beta, q)$=$\zeta(0,\beta, \hat{q})$ for certain $\beta\in\mathbb{R}\cup\{\infty\}$.

Since $S_{\ell}(\Lambda, S)$ is not closed, we choose $h \neq 0$ in $L^p(0, a)$ such that \eqref{e4.3}--\eqref{e4.4} hold and
\begin{align}\label{e4.11}
\int_{0}^{a} x^{2k}h(x)dx=0, 
~~k=0,1,2, \cdots, \ell+2.
\end{align}
All other steps are the same as in the proof of the case $0 \notin \Lambda$. The equality \eqref{e4.11} gives that $0$ is a zero of $\lambda^{\ell+1}H_{\ell}(\lambda)$ with multiplicity $\ell+3$, which implies $H_{\ell}(0)=0$ and $\dot{H}_{\ell}(0)=0$, that is,  
$0\in\sigma(\ell,q,\beta)\cap\sigma(\ell,\hat{q},\beta)$
and 
$\zeta(0,\beta, q)$=$\zeta(0,\beta, \hat{q})$ for certain $\beta\in\mathbb{R}\cup\{\infty\}$. The proof is complete.
\end{proof}

\begin{remark}\label{r4.6}
The hypothesis that the operator $A_q$ is an isomorphism is very strong and it may not hold; see \cite[Remark 5.1]{XYB2023}.
\end{remark}

At the end of this section, we present the proof of Theorem \ref{t4.3}.

\begin{proof}[Proof of Theorem \ref{t4.3}]
Proof by contradiction. Suppose $\hat{q}\neq q$. Then 
$z^{2\ell+2}H_{\ell}(z^2)$ cannot be identically zero.
This is due to the fact that, if $z^{2\ell+2}H_{\ell}(z^2)\equiv0$, then
\begin{equation*}
\frac{\phi_{\ell}^{\prime}(z^2,1,q)}
{\phi_{\ell}(z^2,1,q)}
\equiv\frac{\phi_{\ell}^{\prime}(z^2,1,\hat{q})}
{\phi_{\ell}(z^2,1,\hat{q})},
\end{equation*}
which, together with the uniqueness theorem of the inverse spectral problem based on the Weyl-Titchmarsh $m$-function (see \cite{KST2010}), implies that $q=\hat{q}$ a.e. on $(0,1)$.

Without loss of generality, let $0\notin\Lambda$, that is, $H_{\ell}(0)\neq 0$. Then $H_{\ell}(z^2)$ is a nontrivial entire function of $z$ with zeros at $\pm \sqrt{\lambda_n}$ and at least double zeros for $\lambda_n \in S$.

Recall the Jensen formula: Let $f(z)$ be analytic for $|z|<R$ and $f(0) \neq 0$. If $n(t)$ is the number of zeros of $f(z)$ in $|z| \leq t$, then for $0<r<R$, 
\begin{align*}
\int_{0}^{r} \frac{n(t)}{t} d t
=\frac{1}{2\pi}\int_{0}^{2\pi}\ln|f(re^{i\varphi})|d\varphi-\ln|f(0)|.
\end{align*}

By applying the Jensen formula to $H_{\ell}(z^2)$, we get, with the notation $m(t)=2n_{\Lambda}(t^2)+2n_S(t^2)$, that
\begin{equation}\label{e4.15}
\int_{0}^{r}\frac{m(t)}{t}dt 
\leq \frac{1}{2\pi} \int_{0}^{2\pi} 
\ln|H_{\ell}(r^2 e^{2i\varphi})|d\varphi+C_4, 
\end{equation}
where $C_4=| \ln | H_{\ell}(0)| |$. Substituting the estimate of $H_{\ell}(z^2)$ from the inequality \eqref{e3.7} into \eqref{e4.15}, after some simple calculations, it can be seen that 
\begin{equation}\label{e4.16}
\begin{split}
& \int_{0}^{r}\frac{m(t)}{t}dt-\frac{4ra}{\pi}+\left(k+2\ell+2+\frac{1}{p^{\prime}}\right)\ln r  \\
\leq & \frac{1}{2\pi}\int_{0}^{2\pi}
\ln \left(\varepsilon+Cr^{\gamma}e^{-\delta r|\sin \varphi|}
+C(r|\sin \varphi|)^{k+\frac{1}{p^{\prime}}}
e^{-2\delta r|\sin \varphi|}\right)d\varphi
+C_4.
\end{split}
\end{equation}

It can be easily verified that
$$ r^{\gamma}e^{-\delta r|\sin \varphi|}\leq\frac{(\gamma e)^{\gamma}}{\delta^{\gamma}|\sin\varphi|^{\gamma}},$$
$$(r|\sin\varphi|)^{k+1/p^{\prime}}e^{-2\delta r|\sin\varphi|} \leq \left(\frac{k+1/p^{\prime}}{2\delta}\right)^{k+1/p^{\prime}} e^{-(k+1/p')}.$$
By Lebesgue's dominated convergence theorem, we have
\begin{equation}\label{e4.17}
\lim_{r\rightarrow\infty}\frac{1}{2\pi} 
\int_{0}^{2 \pi} 
\ln (\varepsilon+cr^{\gamma}e^{-\delta r|\sin \varphi|}
+c(r|\sin \varphi|)^{k+\frac{1}{p^{\prime}}}
e^{-2\delta r|\sin \varphi|})
d\varphi
=\ln \varepsilon.
\end{equation}

Taking the upper limit as $r\rightarrow\infty$ on both sides of the inequality \eqref{e4.16}, and considering \eqref{e4.17}, we have
\begin{equation}\label{e4.18}
\limsup_{r\rightarrow\infty}\int_{0}^{r}\frac{m(t)}{t}dt
-\frac{4ra}{\pi}+\left(k+2\ell+2+\frac{1}{p^{\prime}}\right)\ln r 
\leq\ln\varepsilon+C_4.
\end{equation}
From the arbitrariness of $\varepsilon$, the inequality \eqref{e4.18} implies 
\begin{align*}
\int_{0}^{r}\frac{m(t)}{t}dt
-\frac{4ra}{\pi}
+\left(k+2 \ell+2+\frac{1}{p^{\prime}}\right)\ln r 
\rightarrow-\infty, 
~~r\rightarrow\infty,
\end{align*}
which contradicts the assumptions. Therefore, $q=\hat{q}$ a.e. on $(0,a)$. This completes the proof.
\end{proof}

\section{Some corollaries of Theorem \ref{t4.3}}\label{s5}

In this section, we present some corollaries of Theorem \ref{t4.3}, which provide uniqueness results for the inverse spectral problem using different types of known data. For a fixed boundary condition, given a partial potential $q$ on a certain interval $(a,1)$, we give a uniqueness result based on a sequence of eigenvalues in Corollary \ref{t5.3}; a uniqueness result based on a pair of eigenvalues and the corresponding norming constants in Corollary \ref{t5.2}; and a uniqueness result for the half-inverse problem with a certain smoothness condition on the potential in Corollary \ref{t5.6}. For spectral data under two different boundary conditions with unpaired eigenvalues and norming constants, uniqueness results are given in Corollaries  \ref{t5.7}, \ref{t5.9} and \ref{t5.8}.

Denote
\begin{align*}
L_{\ell}^{1}(0,1)=
\begin{cases}
L^1(0,1), &\ell>-1/2, \\
\left\{f \in L^1(0,1)\big|\int_0^1|(1-\ln x) f(x)|dx<\infty\right\}, &\ell=-1/2.
\end{cases}
\end{align*}
Given $q\in L_{\ell}^{1}(0, 1)$, from \eqref{e1.3}, \eqref{e1.4} and \eqref{e1.7}, one sees that the eigenvalue $\lambda_{\ell, \beta, n}(q)\in\sigma(\ell, q, \beta)$ has the following asymptotic behavior:
\begin{equation}\label{e5.4}
\sqrt{\lambda_{\ell, \infty, n}(q)}=\left(n+\frac{\ell}{2}\right) \pi+O\left(\frac{1}{n}\right), ~~n \in \mathbb{N},
\end{equation}
\begin{equation}\label{e5.5}
\sqrt{\lambda_{\ell, \beta, n}(q)}=\left(n+\frac{\ell-1}{2}\right) \pi+O\left(\frac{1}{n}\right), ~~\beta \in \mathbb{R}, 
~~n \in \mathbb{N}.
\end{equation}

\begin{corollary}\label{t5.3}
Let $\ell\geq-1/2$, $a \in(0,1/2]$, and $q$, $\hat{q}\in L_{\ell}^{1}(0,1)$. Consider a subset $S\subset\sigma(\ell,q,\beta)\cap\sigma(\ell,\hat{q},\beta)$. Assume that $q=\hat{q}$ a.e. on $(a, 1)$. If the set $S$ satisfies
\begin{equation}\label{e5.8}
n_S(t) \geq 
\begin{cases}
2a n_{\sigma(\ell, q, \beta)}(t)+(a-1)\ell,& \beta=\infty,\\
2a n_{\sigma(\ell, q, \beta)}(t)+(a-1)\ell-2a,&\beta\in\mathbb{R},
\end{cases}
\end{equation}
for all sufficiently large $t>0$, then $q=\hat{q}$ a.e. on $(0,a)$.
\end{corollary}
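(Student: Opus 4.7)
The plan is to apply Theorem \ref{t4.3} with the choices $\Lambda=S$, $k=0$, and $p=1$, so that $p'=\infty$, the exponent $1/p'$ vanishes, and no regularity hypothesis on $q-\hat{q}$ beyond the standing integrability is needed. Under these choices the common counting function collapses to $m(t)=4\,n_S(t^2)$, and the key inequality \eqref{e1.0} that must be verified reduces to
\begin{align*}
\limsup_{R\to\infty}\left[\int_0^R \frac{m(t)}{t}\,dt-\frac{4aR}{\pi}+(2\ell+2)\ln R\right]>-\infty.
\end{align*}

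To confirm this, I would first extract from the sharpened eigenvalue asymptotics \eqref{e5.4}--\eqref{e5.5}, which are in force because $q\in L_{\ell}^{1}(0,1)$, the counting-function estimate $n_{\sigma(\ell,q,\beta)}(t^2)=t/\pi-\ell/2+O(1)$ when $\beta=\infty$, and $n_{\sigma(\ell,q,\beta)}(t^2)=t/\pi-(\ell-1)/2+O(1)$ when $\beta\in\mathbb{R}$. Integrating these against $dt/t$ over a large range $[T_0,R]$ and combining with the density hypothesis \eqref{e5.8} together with the inclusion $S\subset\sigma(\ell,q,\beta)$ then yields, in both cases, a lower bound of the shape
\begin{align*}
\int_0^R \frac{m(t)}{t}\,dt\geq \frac{8aR}{\pi}+O(\ln R),
\end{align*}
so the bracketed expression in the limsup is bounded below by $\frac{4aR}{\pi}+O(\ln R)$, which plainly tends to $+\infty$ for any $a>0$. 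The explicit shifts $(a-1)\ell$ and $(a-1)\ell-2a$ in \eqref{e5.8} are calibrated to match the differing constant terms in the two eigenvalue asymptotics, but the final verification is insensitive to logarithmic-order discrepancies.

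The only real technical obstacle is careful bookkeeping of error terms. The $O(1)$ fluctuation of $n_{\sigma(\ell,q,\beta)}(t^2)$ around its linear-in-$t$ leading term produces, upon division by $t$ and integration, an error of size $O(\ln R)$ which must be tracked alongside the explicit logarithmic coefficients that come from the shifts in \eqref{e5.8}. Since the linear-in-$R$ term $\frac{4aR}{\pi}$ dominates any such logarithmic contribution whenever $a>0$, the verification goes through with room to spare; the standing restriction $a\in(0,1/2]$ is not needed for the estimate itself but enters only because $n_S\leq n_{\sigma(\ell,q,\beta)}$ forces $2a\leq 1$ for \eqref{e5.8} to be compatible with $S\subset\sigma(\ell,q,\beta)$ at large $t$. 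Once \eqref{e1.0} has been established, Theorem \ref{t4.3} immediately yields $q=\hat{q}$ a.e.\ on $(0,a)$, completing the proof.
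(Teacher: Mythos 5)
Your overall strategy --- feed the density condition \eqref{e5.8} and the counting-function asymptotics derived from \eqref{e5.4}--\eqref{e5.5} (via Lemma \ref{l5.1}) into Theorem \ref{t4.3} with $k=0$, $p=1$ --- is exactly the paper's route (the paper proves Corollary \ref{t5.2} this way and declares \ref{t5.3} analogous). But there is a genuine error in your identification of $m(t)$. Corollary \ref{t5.3} supplies \emph{only} the eigenvalues in $S$; no norming constants are given (that is the hypothesis of Corollary \ref{t5.2}, which is why its density threshold is $a\,n_{\sigma}$ rather than $2a\,n_{\sigma}$). In the notation of Theorem \ref{t4.3} you must therefore take $\Lambda=S$ and the norming-constant subset to be empty, so $m(t)=2n_S(t^2)$, not $4n_S(t^2)$. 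You cannot set the norming-constant subset equal to $S$ without assuming $\zeta(\lambda_n,\beta,q)=\zeta(\lambda_n,\beta,\hat q)$, which is not part of the statement.

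This is not a cosmetic factor of two: with the correct $m(t)=2n_S(t^2)$ and $n_S(t)\geq 2a\,n_{\sigma(\ell,q,\beta)}(t)+\mathrm{const}$, the linear terms in the bracket of \eqref{e1.0} cancel \emph{exactly} ($\tfrac{4aR}{\pi}-\tfrac{4aR}{\pi}=0$), and the verification reduces entirely to the logarithmic bookkeeping you dismiss as irrelevant. Carrying it out with \eqref{e5.1} gives a $\ln R$ coefficient of $2(1-a)$ for $\beta=\infty$ and $2-4a$ for $\beta\in\mathbb{R}$; in the Robin case the restriction $a\le 1/2$ is precisely what makes this coefficient nonnegative, contrary to your remark that $a\le 1/2$ enters only through compatibility of \eqref{e5.8} with $n_S\le n_\sigma$. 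The explicit shifts $(a-1)\ell$ and $(a-1)\ell-2a$ in \eqref{e5.8} are likewise essential, not merely ``calibrated'' decoration. So the proof as written establishes a different statement (eigenvalues \emph{plus} norming constants on $S$, with density $2a$, which follows trivially); to prove the corollary as stated you must redo the computation with $m(t)=2n_S(t^2)$ and track the $\ln R$ terms.
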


\begin{corollary}\label{t5.2}
Let $\ell\geq-1/2$, $a \in(0,1]$, and $q$, $\hat{q}\in L_{\ell}^{1}(0,1)$. Consider a subset $S\subset\sigma(\ell,q,\beta)\cap\sigma(\ell,\hat{q},\beta)$ such that 
$\zeta(\lambda_{n},\beta, q)$=$\zeta(\lambda_{n},\beta, \hat{q})$
for $\lambda_n \in S$. Assume that $q=\hat{q}$ a.e. on $(a, 1)$. If the set $S$ satisfies
\begin{equation}\label{e5.3}
n_S(t) \geq 
\begin{cases}
a n_{\sigma(\ell, q, \beta)}(t)+\frac{(a-1) \ell}{2},& \beta=\infty,\\
a n_{\sigma(\ell, q, \beta)}(t)+\frac{(a-1) \ell}{2}-\frac{a}{2},
&\beta\in\mathbb{R},
\end{cases}
\end{equation}
for all sufficiently large $t>0$, then $q=\hat{q}$ a.e. on $(0,a)$.
\end{corollary}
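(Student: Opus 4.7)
The plan is to deduce Corollary \ref{t5.2} as a direct application of Theorem \ref{t4.3} with the specializations $\Lambda = S$, $\beta_n \equiv \beta$, $k = 0$, and $p = 1$ (so $1/p' = 0$). Under these choices the smoothness hypothesis $q - \hat{q} \in W^{0,1}((a - \delta_0, a])$ is automatic from $L^1$ membership, the condition $\lambda_n \nrightarrow -\infty$ follows from the semi-boundedness of $L(\ell, q)$ for $\ell \geq -1/2$, the common counting function simplifies to $m(t) = 2n_{\Lambda}(t^{2}) + 2n_{S}(t^{2}) = 4 n_{S}(t^{2})$, and the logarithmic weight in \eqref{e1.0} becomes $k + 2\ell + 2 + 1/p' = 2\ell + 2$. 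The whole corollary therefore reduces to verifying
\begin{align*}
\limsup_{R \to \infty}\left[\int_{0}^{R}\frac{4 n_{S}(t^{2})}{t}\,dt - \frac{4aR}{\pi} + (2\ell+2)\ln R\right] > -\infty.
\end{align*}

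The next step is to pin down a sharp asymptotic for $I(R) := \int_{0}^{R} n_{\sigma(\ell,q,\beta)}(t^{2})/t\,dt$. Rewriting the integral by summation as $N \ln R - \sum_{n=1}^{N}\ln\sqrt{\lambda_{n}}$ with $N := n_{\sigma(\ell,q,\beta)}(R^{2})$, substituting the eigenvalue expansions \eqref{e5.4}--\eqref{e5.5} (valid because $q, \hat{q} \in L_{\ell}^{1}(0,1)$), and applying Stirling's formula to $\sum_{n=1}^{N}\ln(n+c)$, combined with the algebraic cancellation $N\ln(R/(N\pi)) + N = R/\pi + O(1/R)$, I would obtain
\begin{align*}
I(R) = \frac{R}{\pi} - \frac{\ell+1}{2}\,\ln R + O(1)\quad(\beta = \infty),
\end{align*}
and the analogous identity with $-\ell/2$ in place of $-(\ell+1)/2$ for $\beta \in \mathbb{R}$. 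Plugging the hypothesis \eqref{e5.3} into $m(t) = 4 n_{S}(t^{2})$ yields $m(t) \geq 4 a n_{\sigma(\ell,q,\beta)}(t^{2}) + 2(a-1)\ell$ for $\beta = \infty$, and the analogous lower bound with the additional shift $-2a$ for $\beta \in \mathbb{R}$; a direct substitution gives in both cases $\int_{0}^{R} m(t)/t\,dt \geq 4aR/\pi - (2\ell + 2a)\ln R + O(1)$, so the bracketed quantity above is bounded below by $(2 - 2a)\ln R + O(1)$, which stays above $-\infty$ precisely because the hypothesis $a \leq 1$ keeps the leading coefficient nonnegative. Theorem \ref{t4.3} then yields $q = \hat{q}$ a.e.\ on $(0, a)$.

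The main technical obstacle is the \emph{precision} required in the second step: a mere $O(\ln R)$ remainder in the asymptotic of $I(R)$ would be too crude, since the slack $(2 - 2a)\ln R$ permitted by the hypothesis $a \leq 1$ can be made arbitrarily small as $a \to 1$. Extracting the exact coefficient of $\ln R$ forces one to track the subleading terms through Stirling's formula and to exploit the linear-term cancellation noted above. This is also why Corollary \ref{t5.2} is stated under the slightly stronger class $L_{\ell}^{1}(0,1)$ rather than $L^{1}(0,1)$ when $\ell = -1/2$: the stronger integrability is what upgrades the eigenvalue error estimates in \eqref{e1.3}--\eqref{e1.4} to the $O(1/n)$ rates of \eqref{e5.4}--\eqref{e5.5} needed for the Stirling step.
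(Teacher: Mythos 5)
Your proposal is correct and follows essentially the same route as the paper: specialize Theorem \ref{t4.3} with $\Lambda=S$, $k=0$, $p=1$, so $m(t)=4n_S(t^2)$, and reduce everything to the lower bound $\int_1^R 2n_{\sigma(\ell,q,\beta)}(t^2)/t\,dt \ge \tfrac{2}{\pi}R-(\ell+1)\ln R+O(1)$ (resp.\ $-\ell\ln R$ for $\beta\in\mathbb{R}$), arriving at the same final slack $2(1-a)\ln R+O(1)$. The only difference is that you re-derive this counting-function asymptotic by Abel summation and Stirling's formula, whereas the paper simply quotes it as Lemma \ref{l5.1} from Horv\'ath--S\'af\'ar; your closing remarks about the needed precision in the $\ln R$ coefficient and the role of $L^1_{\ell}$ correctly identify why that lemma (and the $O(1/n)$ eigenvalue asymptotics) are the crux.
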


For the case $\ell=0$, the result of Corollary \ref{t5.3} is due to Gesztesy and Simon \cite[Theorem 1.3]{GSb2000}, and the result of Corollary \ref{t5.2} is due to Wei and Xu \cite[Theorem 4.1]{WX2012}. 
Similar results in Corollaries \ref{t5.3} and \ref{t5.2} with $\ell\geq0$ under Dirichlet and Neumann boundary conditions can be found in \cite[Theorems 4.1 and 4.2]{YGJ2019}.

We only prove Corollary \ref{t5.2}, the proof of Corollary \ref{t5.3} is analogous to that of Corollary \ref{t5.2}, therefore, it is omitted.  Before proving Corollary  \ref{t5.2}, we first present the following lemma.

\begin{lemma}\cite[Lemma 3.2]{HorvO2016}\label{l5.1}
Let $\alpha_{1}>0$, $\alpha_{2}\in\mathbb{R}$, $\mu_0 \leq \mu_1 \leq \mu_2 \leq \cdots$ be real numbers tending to $+\infty$, and $m(t)=\sum\limits_{\mu_k \leq t}1$. If $\sqrt{\mu_k} \leq \alpha_{1}k+\alpha_{2}+O(1/k)$ holds for all sufficiently large indices $k$ then
\begin{align*}
\int_1^R \frac{2 m\left(t^2\right)}{t} d t \geq \frac{2}{\alpha_{1}} R+\left(1-2 \frac{\alpha_{2}}{\alpha_{1}}\right) \ln R+O(1), R \rightarrow \infty.
\end{align*}
\end{lemma}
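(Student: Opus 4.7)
The plan is to apply an Abel-summation (integration-by-parts) identity to rewrite the integral explicitly in terms of the counting data, and then use monotonicity to absorb the integer-vs-real discrepancy between $m(R^2)$ and $R/\alpha_1$. After the substitution $u = t^2$, the integral becomes $\int_1^{R^2} m(u)/u \, du$; treating $m$ as a step function with jumps at the $\mu_k$ and integrating by parts yields
\begin{equation*}
\int_1^R \frac{2m(t^2)}{t}\, dt = 2K\ln R - \sum_{k<K}\ln\mu_k + O(1),
\end{equation*}
where $K := m(R^2)$ and the $O(1)$ absorbs the contribution of the finitely many $\mu_k$ that may be less than $1$.

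Next I will choose the integer $K_0 := \lfloor (R-\alpha_2)/\alpha_1 \rfloor$, which will serve as a convenient lower bound on $K$. Applying the hypothesis at $k = K_0-1$ gives $\sqrt{\mu_{K_0-1}} \leq \alpha_1(K_0-1) + \alpha_2 + O(1/K_0) < R$ for all large $R$, hence $K \geq K_0$. For every $K_0 \leq k < K$ one has $\mu_k \leq R^2$, so $2\ln R - \ln\mu_k \geq 0$; therefore replacing $K$ by $K_0$ in the identity can only decrease the right-hand side, giving
\begin{equation*}
\int_1^R \frac{2m(t^2)}{t}\, dt \geq 2K_0\ln R - \sum_{k<K_0}\ln\mu_k + O(1).
\end{equation*}

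The main computation is the upper bound on $\sum_{k<K_0}\ln\mu_k$. The hypothesis yields $\ln\mu_k \leq 2\ln(\alpha_1 k + \alpha_2) + O(1/k^2)$ for large $k$, and the $O(1/k^2)$ corrections are summable. Using the product formula $\prod_{k=0}^{K_0-1}(\alpha_1 k + \alpha_2) = \alpha_1^{K_0}\,\Gamma(K_0+\alpha_2/\alpha_1)/\Gamma(\alpha_2/\alpha_1)$ together with Stirling's asymptotics (with obvious modifications absorbed into $O(1)$ if $\alpha_2/\alpha_1 \leq 0$, in which case the first few factors must be handled separately) produces
\begin{equation*}
\sum_{k<K_0}\ln\mu_k \leq \left(2K_0 + \frac{2\alpha_2}{\alpha_1} - 1\right)\ln(\alpha_1 K_0 + \alpha_2) - 2K_0 + O(1).
\end{equation*}
Since $\alpha_1 K_0 + \alpha_2 = R + O(1)$ and $K_0 = O(R)$, we may replace $\ln(\alpha_1 K_0 + \alpha_2)$ by $\ln R$ with error $O(1)$. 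Substituting this into the previous inequality gives $I \geq (1 - 2\alpha_2/\alpha_1)\ln R + 2K_0 + O(1)$, and using $2K_0 = 2R/\alpha_1 + O(1)$ produces exactly the claimed lower bound.

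The main obstacle is a bookkeeping subtlety: the discrepancy between $K_0$ and $R/\alpha_1$ is only $O(1)$, but multiplying any such $O(1)$ error by $\ln R$ would destroy the required $O(1)$ final error. Consequently, the expressions must be kept in terms of the integer $K_0$ throughout, with the substitution $2K_0 = 2R/\alpha_1 + O(1)$ performed only in the very last step, in an additive position that is not multiplied by $\ln R$; every other step (in particular the Stirling estimate) must separately ensure that its $O(1)$ errors are genuinely constant and not masquerading coefficients of $\ln R$.
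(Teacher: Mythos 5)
The paper does not prove this lemma at all --- it is imported verbatim from \cite[Lemma 3.2]{HorvO2016} --- so there is no in-paper argument to compare against. Your proof is correct and is essentially the standard argument behind that cited result: the Abel-summation identity $\int_1^R 2m(t^2)t^{-1}\,dt = 2K\ln R-\sum_{k<K}\ln\mu_k+O(1)$, the monotone truncation to the integer $K_0=\lfloor(R-\alpha_2)/\alpha_1\rfloor$, and the Stirling evaluation of $\sum_{k<K_0}\ln(\alpha_1k+\alpha_2)$ all check out, and you correctly isolate the one genuinely delicate point, namely that $K_0$ must be kept exact in the coefficient of $\ln R$ so that the $2K_0\ln R$ terms cancel identically (only the replacement $\ln(\alpha_1K_0+\alpha_2)=\ln R+O(1/R)$, multiplied by $O(R)$, is permissible there). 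The only cosmetic caveat is that the sum $\sum_{k<K}\ln\mu_k$ should formally be restricted to the indices with $\mu_k>1$ (finitely many $\mu_k$ may be $\le 1$ or even nonpositive, where the logarithm is large-negative or undefined), but this affects only finitely many terms and is harmlessly absorbed into the $O(1)$ exactly as you indicate.
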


\begin{proof}[Proof of Corollary \ref{t5.2}]
We first consider the case $\beta=\infty$. Let $m(t)=4n_S(t^2)$. The condition \eqref{e5.3} for $\beta=\infty$ gives  
\begin{equation}\label{e5.6}
\begin{split}
& \int_{0}^{r}\frac{m(t)}{t}dt
-\frac{4ra}{\pi}
+(2\ell+2)\ln r \\
\geq & 2a\int_{1}^{r} 
\frac{2n_{\sigma(\ell,q,\infty)}(t^2)}{t}dt
+2a\ell\ln r
-\frac{4ra}{\pi}+2\ln r.
\end{split}
\end{equation}
The asymptotic formula \eqref{e5.4} and Lemma \ref{l5.1} yield 
\begin{align}\label{e5.1}
\int_{1}^{r} \frac{2 n_{\sigma(\ell,q,\infty)}(t^2)}{t}dt 
\geq \frac{2}{\pi}r-(\ell+1)\ln r+O(1), 
~~r \rightarrow \infty.
\end{align}
From the inequalities \eqref{e5.6} and \eqref{e5.1}, one has 
\begin{equation*}
\int_{0}^{r}\frac{m(t)}{t}dt
-\frac{4ra}{\pi}
+(2\ell+2)\ln r >-\infty, 
~~r\rightarrow\infty.
\end{equation*}
By Theorem \ref{t4.3}, we have $q=\hat{q}$ a.e. on $(0,a)$. 

Now we consider the case $\beta\in\mathbb{R}$. The fact that $m(t)=4n_S(t^2)$ and the condition \eqref{e5.3} for $\beta\in\mathbb{R}$ imply   
\begin{equation}\label{e5.7}
\begin{split}
& \int_{0}^{r}\frac{m(t)}{t}dt
-\frac{4ra}{\pi}
+(2\ell+2)\ln r \\
\geq & 2a\int_{1}^{r} 
\frac{2n_{\sigma(\ell,q,\beta)}(t^2)}{t}dt
+2a(\ell-1)\ln r
-\frac{4ra}{\pi}+2\ln r.
\end{split}
\end{equation}
The asymptotic behavior \eqref{e5.5} and Lemma \ref{l5.1} yield 
\begin{align}\label{e5.2}
\int_{1}^{r} \frac{2 n_{\sigma(\ell,q,\beta)}(t^2)}{t}dt 
\geq \frac{2}{\pi}r-\ell\ln r+O(1), 
~~r \rightarrow \infty.
\end{align}
From the inequalities \eqref{e5.7} and \eqref{e5.2}, we have
\begin{equation*}
\int_{0}^{r}\frac{m(t)}{t}dt
-\frac{4ra}{\pi}
+(2\ell+2)\ln r  >-\infty, 
~~r\rightarrow\infty.
\end{equation*}
By Theorem \ref{t4.3}, we have $q=\hat{q}$ a.e. on $(0,a)$. This completes the proof.
\end{proof}

\begin{remark}\label{r5.1}
For $\ell=-1/2$ and $q$, $\hat{q}\in L^1(0,1)$,
since $\alpha_{1}=\pi$ and 
\begin{align*}
\alpha_{2}=
\begin{cases}
(\frac{\ell}{2}+1)\pi+c, &\beta=\infty, \\
\frac{\ell+1}{2}\pi+c, &\beta\in\mathbb{R},
\end{cases}
\end{align*}
for a sufficiently small $c>0$ with the symbols $\alpha_{1}$ and $\alpha_{2}$ in Lemma \ref{l5.1}, the result of Corollary \ref{t5.3} is valid except at $a = 1/2$; however,  it holds for $a = 1/2$ if any positive number is added to the right-hand side of the inequality \eqref{e5.8}.
The result of Corollary \ref{t5.2} also holds except at $a = 1$; however, if any positive number is added to the right-hand side of the inequality \eqref{e5.3}, then it also holds for $a = 1$. 

\end{remark}

For $x\in\mathbb{R}$, we use the symbol $[x]$ to denote the greatest integer not exceeding $x$, and present the following corollary.

\begin{corollary}\label{t5.6}
Let $\ell\geq-1/2$ and $q\in L_{\ell}^{1}(0,1)$. 
Suppose that $q$ is $C^{2 k}((1/2-\varepsilon, 1/2+\varepsilon))$ for some $k\in\mathbb{N}\cup\{0\}$ and $\varepsilon>0$. Then the following conclusions hold.
\begin{enumerate}[label=\textnormal{(\arabic*).}]
\item The potential $q$ on $(1/2,1)$, and the set $\sigma(\ell,q,\infty)$ except for the number of $\left([\ell/2]+k+1\right)$ elements, uniquely determine $q$. 
\item The potential $q$ on $(1/2,1)$, and the set $\sigma(\ell,q,\beta)$ except for the number of $\left([(\ell+1)/2]+k+1\right)$ elements, uniquely determine $q$, where $\beta\in\mathbb{R}$. 
\end{enumerate}
\end{corollary}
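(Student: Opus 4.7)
The plan is to apply Theorem \ref{t4.3} with $a=1/2$, $p=\infty$, $S=\emptyset$, and smoothness parameter $2k$. For any competing potential $\hat q$ satisfying the hypotheses (in particular agreeing with $q$ on $(1/2,1)$ and having the same $C^{2k}$ regularity across $1/2$), the equality $\hat q=q$ on $(1/2,1)$ forces the one-sided derivatives from the right at $1/2$ to vanish, and $C^{2k}$-smoothness across $1/2$ then yields $(q-\hat q)^{(i)}(1/2)=0$ for $i=0,1,\dots,2k$. Thus the full hypothesis of Theorem \ref{t4.3} for $p=\infty$ is satisfied, and the only nontrivial thing to verify is the tail condition \eqref{e1.0}.

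With $S=\emptyset$ the counting function reduces to $m(t)=2n_\Lambda(t^2)$. If $\Lambda$ is the full spectrum with at most $N$ elements removed, then $n_\Lambda(t^2)\geq n_{\sigma(\ell,q,\beta)}(t^2)-N$ for large $t$. Applying Lemma \ref{l5.1} to the asymptotics \eqref{e5.4}, \eqref{e5.5} (with $\alpha_1=\pi$ and $\alpha_2=\ell\pi/2$ in case (1) or $\alpha_2=(\ell-1)\pi/2$ in case (2)) gives
\begin{align*}
\int_1^R \frac{m(t)}{t}\,dt \;\geq\; \frac{2R}{\pi}+c(\ell,\beta)\ln R -2N\ln R + O(1),
\end{align*}
where $c(\ell,\infty)=1-\ell$ and $c(\ell,\beta)=2-\ell$ for $\beta\in\mathbb{R}$.

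Substituting into \eqref{e1.0} with $a=1/2$ and $1/p'=1$, the $2R/\pi$ cancels against $-4aR/\pi$, and the remaining coefficient of $\ln R$ equals $c(\ell,\beta)+2k+2\ell+3-2N$. A short parity check over $\ell\in\{0,1,2,\dots\}\cup\{-1/2\}$ shows that this coefficient is nonnegative whenever $N\leq[\ell/2]+k+2$ in case (1) and $N\leq[(\ell+1)/2]+k+2$ in case (2); in particular the stated bounds $[\ell/2]+k+1$ and $[(\ell+1)/2]+k+1$ are comfortably met. Theorem \ref{t4.3} then delivers $q=\hat q$ a.e.\ on $(0,1/2)$.

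The main obstacle is the parity-dependent bookkeeping when translating the non-negativity of $c(\ell,\beta)+2k+2\ell+3-2N$ into the integer thresholds of the statement, and ensuring the technical assumption $0\notin\Lambda$ required for the Jensen-formula step in the proof of Theorem \ref{t4.3} can always be arranged by excluding at most one further eigenvalue; the slack between the optimal threshold and the stated count absorbs this without loss.
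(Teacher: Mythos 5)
Your overall strategy coincides with the paper's: apply Theorem \ref{t4.3} with $a=1/2$, $p=\infty$, $S=\emptyset$ and smoothness parameter $2k$, reduce everything to the counting condition \eqref{e1.0}, and feed the eigenvalue asymptotics into Lemma \ref{l5.1}. However, there is a concrete numerical error in your application of Lemma \ref{l5.1}. The spectra are indexed by $n\in\mathbb{N}=\{1,2,\dots\}$ in \eqref{e5.4}--\eqref{e5.5}, while Lemma \ref{l5.1} counts a sequence $\mu_0\leq\mu_1\leq\cdots$ starting from $k=0$; re-indexing $\mu_k=\lambda_{\ell,\beta,k+1}$ forces $\alpha_2=(1+\ell/2)\pi$ in the Dirichlet case and $\alpha_2=\tfrac{\ell+1}{2}\pi$ in the Robin case, so the correct $\ln R$ coefficients are $-(\ell+1)$ and $-\ell$ (these are exactly the paper's inequalities \eqref{e5.1} and \eqref{e5.2}), not your $1-\ell$ and $2-\ell$. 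Your values overcount by $2$, which inflates the admissible number of discarded eigenvalues by one: with the correct constants the requirement is $N\leq k+\ell/2+1$, i.e.\ exactly $N\leq[\ell/2]+k+1$, so the stated count sits precisely at the threshold and there is \emph{no} slack.

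This matters because your treatment of the $0\notin\Lambda$ issue relies on that phantom slack: you propose to discard one further eigenvalue to arrange $H_\ell(0)\neq0$, but doing so would push $N$ to $[\ell/2]+k+2$ and violate the (correct) threshold, so the argument as written breaks down at its final step. In fact no extra eigenvalue needs to be sacrificed: the case $0\in\Lambda$ is absorbed into the proof of Theorem \ref{t4.3} itself (the paper disposes of it there as a ``without loss of generality'' normalization of the Jensen-formula argument), so the corollary can and should be proved with $N=[\ell/2]+k+1$ exactly, as the paper does via the bound $m(t)\geq 2n_{\sigma(\ell,q,\beta)}(t^2)-\ell-2k-2$ (resp.\ $-\ell-2k-3$) combined with \eqref{e5.1} (resp.\ \eqref{e5.2}).
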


For the case $\ell=0$, the result of Corollary \ref{t5.6} is due to Gesztesy and Simon \cite[Theorem 1.2]{GSb2000}. 

\begin{proof}[Proof of Corollary \ref{t5.6}]
(1).
Suppose $S\subseteq\sigma(\ell,q,\infty)$ such that the number of elements in the set $\sigma(\ell,q,\infty)\setminus S$ is less than $[\ell/2]+k+1$, 
and let $m(t)=2n_S(t^2)$. Then one has 
\begin{equation}\label{e5.10}
m(t)\geq 2n_{\sigma(\ell, q,\infty)}(t^2)-\ell-2k-2,
\end{equation}
for all sufficiently large $t>0$. The inequalities \eqref{e5.1} and \eqref{e5.10} imply 
\begin{align*}
\int_{0}^{r}\frac{m(t)}{t}dt-\frac{2r}{\pi}+(2k+2\ell+3)\ln r
\geq&\int_{0}^{r}\frac{2n_{\sigma(\ell, q,\infty)}(t^2)}{t}dt
-\frac{2r}{\pi}+(\ell+1)\ln r\\
>&-\infty, ~~r\rightarrow\infty.
\end{align*}
Therefore, the conclusion (1) is proved by Theorem \ref{t4.3}.

(2). Let $\beta\in\mathbb{R}$. 
Suppose $S\subseteq\sigma(\ell,q,\beta)$ such that the number of elements in the set $\sigma(\ell,q,\beta)\setminus S$ is less than $[(\ell+1)/2]+k+1$, 
and let 
$m(t)=2n_S(t^2)$. Then one has  
\begin{equation}\label{e5.11}
m(t)\geq 2n_{\sigma(\ell, q,\beta)}(t^2)-\ell-2k-3,
\end{equation}
for all sufficiently large $t>0$. The inequalities \eqref{e5.2} and \eqref{e5.11} imply 
\begin{align*}
\int_{0}^{r}\frac{m(t)}{t}dt
-\frac{2r}{\pi}+(2k+2\ell+3)\ln r
\geq&\int_{0}^{r}\frac{2n_{\sigma(\ell, q,\beta)}(t^2)}{t}dt
-\frac{2r}{\pi}+\ell\ln r\\
>&-\infty, ~~r\rightarrow\infty.
\end{align*}
Thus, the conclusion (2) is proved by Theorem \ref{t4.3}.
\end{proof}

\begin{remark}\label{r5.3}
For $\ell=-1/2$, if $q \in L^{1}(0,1)$ instead of $q \in L^{1}_{\ell}(0,1)$, then the result of Corollary \ref{t5.6} holds if an additional eigenvalue is added to the original condition.
\end{remark}

\begin{corollary}\label{t5.7}
Let $\ell\geq-1/2$, $q\in L_{\ell}^{1}(0,1)$, $\beta_{1}$, $\beta_{2}\in\mathbb{R}\cup\{\infty\}$, $\beta_{1}\neq\beta_{2}$, and $M\subseteq\mathbb{N}$. Then the following conclusions hold.
\begin{enumerate}[label=\textnormal{(\arabic*)}.]
\item If $\beta_{1}=\infty$ or $\beta_{2}=\infty$, then the sets $\sigma(\ell, q, \beta_1)$, $\{\zeta(\lambda_{\ell, \beta_{1}, n}(q),\beta_{1}, q)\}_{n\in M}$ and $\{\lambda_{\ell, \beta_{2}, n}(q)\}_{n\in\mathbb{N}\setminus M}$ uniquely determine $q$.
\item If $\beta_{1}\neq\infty$ and $\beta_{2}\neq\infty$, then the sets $\sigma(\ell, q, \beta_1)$, $\{\zeta(\lambda_{\ell, \beta_{1}, n}(q),\beta_{1}, q)\}_{n\in M}$ and $\{\lambda_{\ell, \beta_{2}, n}(q)\}_{n\in\mathbb{N}\setminus (M\cup\{k\})}$ for a positive integer $k\notin M$ uniquely determine $q$.
\end{enumerate}
\end{corollary}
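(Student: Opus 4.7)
The plan is to apply Theorem \ref{t4.3} with $a=1$, $k=0$ and $p=1$, so that the matching-on-$(a,1)$ and smoothness hypotheses become vacuous and \eqref{e1.0} only requires a $2\ell+2$ coefficient in front of $\ln R$. If $\hat{q}\in L_\ell^1(0,1)$ is another potential sharing the data, then matching the enumerations of the common $\beta_1$-spectrum gives $\lambda_{\ell,\beta_1,n}(q)=\lambda_{\ell,\beta_1,n}(\hat{q})$ for every $n$, and the remaining hypotheses make
\begin{align*}
\Lambda=\sigma(\ell,q,\beta_1)\cup\{\lambda_{\ell,\beta_2,n}(q):n\in\mathcal{N}\},\qquad
S=\{\lambda_{\ell,\beta_1,n}(q):n\in M\}\subseteq\Lambda
\end{align*}
(with $\mathcal{N}=\mathbb{N}\setminus M$ in case (1) and $\mathcal{N}=\mathbb{N}\setminus(M\cup\{k\})$ in case (2)) a pair satisfying the intersection- and norming-constant hypotheses of Theorem \ref{t4.3} for appropriate boundary parameters $\beta_n\in\{\beta_1,\beta_2\}$.

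Writing $I_i(t)=\{n\in\mathbb{N}:\lambda_{\ell,\beta_i,n}(q)\leq t^2\}$, inclusion--exclusion yields
\begin{align*}
m(t)=2n_{\sigma(\ell,q,\beta_1)}(t^2)+2n_{\sigma(\ell,q,\beta_2)}(t^2)+2f(t)-2\delta_k(t),
\end{align*}
with $f(t)=|M\cap I_1(t)|-|M\cap I_2(t)|$ and $\delta_k(t)=\mathbf{1}_{\{\lambda_{\ell,\beta_2,k}(q)\leq t^2\}}$ present only in case (2). Lemma \ref{l5.1}, applied exactly as in \eqref{e5.1}--\eqref{e5.2} (with $\alpha_1=\pi$ and $\alpha_2=(\ell/2+1)\pi$ for $\beta=\infty$, $\alpha_2=(\ell+1)\pi/2$ for $\beta\in\mathbb{R}$), gives
\begin{align*}
\int_{1}^{R}\frac{2(n_{\sigma(\ell,q,\beta_1)}(t^2)+n_{\sigma(\ell,q,\beta_2)}(t^2))}{t}dt\geq\frac{4R}{\pi}+c\ln R+O(1),
\end{align*}
with $c=-(2\ell+1)$ in case (1) and $c=-2\ell$ in case (2); and trivially $\int_1^R 2\delta_k(t)/t\,dt=2\ln R+O(1)$.

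The decisive step is to control $\int_1^R f(t)/t\,dt$. For each $n\in M$ the corresponding summand of $f$ is the signed indicator of the $t$-interval between $\sqrt{\lambda_{\ell,\beta_1,n}(q)}$ and $\sqrt{\lambda_{\ell,\beta_2,n}(q)}$, so its contribution to $\int_1^R f/t\,dt$ is (up to sign) $\ln\bigl(\max_i\sqrt{\lambda_{\ell,\beta_i,n}(q)}/\min_i\sqrt{\lambda_{\ell,\beta_i,n}(q)}\bigr)$. By \eqref{e5.4}--\eqref{e5.5} this ratio equals $1+\frac{1}{2n}+O(n^{-2})$ in case (1), because the two asymptotic indices differ by $1/2$, and $1+O(n^{-2})$ in case (2), because the leading asymptotics coincide. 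Summing over $n\in M$ with $n\lesssim R/\pi$ produces
\begin{align*}
\left|\int_{1}^{R}\frac{f(t)}{t}dt\right|\leq\tfrac{1}{2}\ln R+O(1)\;\;\text{in case (1)},\qquad
\left|\int_{1}^{R}\frac{f(t)}{t}dt\right|=O(1)\;\;\text{in case (2)}.
\end{align*}
Combining the three pieces yields $\int_0^R m(t)/t\,dt\geq 4R/\pi-(2\ell+2)\ln R+O(1)$ in both cases, so the left-hand side of \eqref{e1.0} is bounded below and Theorem \ref{t4.3} forces $q=\hat{q}$ a.e. on $(0,1)$.

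The main obstacle is exactly this bound on $\int f/t$: the naive pointwise estimate $|f|\leq1$ gives only an $O(\ln R)$ correction, one logarithmic factor too many, which would send \eqref{e1.0} to $-\infty$. The sharper bound relies on the interlacing of the two spectra encoded in \eqref{e5.4}--\eqref{e5.5}—the ``active window'' per index $n\in M$ has length $O(1/n)$ in case (1) and $O(1/n^{2})$ in case (2)—and is precisely what closes the logarithmic gap and makes uniqueness hold all the way up to $a=1$ even when a full half-spectrum of $\beta_2$-data is traded for $\beta_1$-norming-constant data.
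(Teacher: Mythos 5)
Your proof is correct and lands on the same $-(2\ell+2)\ln R$ threshold, but the key counting estimate is obtained by a genuinely different mechanism than the paper's. The paper never touches the correction term $\int_1^R f(t)/t\,dt$ at all: it invokes the \emph{interlacing} of $\sigma(\ell,q,\beta_1)$ and $\sigma(\ell,q,\beta_2)$ to get the pointwise comparison $n_{\{\lambda_{\ell,\beta_1,n}\}_{n\in S_0}}(t^2)\geq n_{\{\lambda_{\ell,\beta_2,n}\}_{n\in S_0}}(t^2)$ (or the reverse) simultaneously for $S_0=\mathbb{N},M,\mathbb{N}\setminus M$, which lets it absorb the $M$-indexed and $(\mathbb{N}\setminus M)$-indexed pieces into a single full-spectrum counting function, yielding $m(t)\geq 4n_{\sigma(\ell,q,\beta_j)}(t^2)-2\chi(\beta_1,\beta_2)$ for one of $j=1,2$; Lemma \ref{l5.1} is then applied once to that spectrum. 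You instead keep the exact identity $m(t)=2n_{\sigma(\beta_1)}+2n_{\sigma(\beta_2)}+2f-2\delta_k$ and control $\int f/t$ quantitatively through the asymptotics \eqref{e5.4}--\eqref{e5.5}, showing the per-index logarithmic window is $\tfrac{1}{2n}+O(n^{-2})$ in case (1) and $O(n^{-2})$ in case (2). Both routes are tight with no slack in the worst case ($M=\mathbb{N}$), so your $\tfrac12\ln R$ constant is essential and you computed it correctly. The trade-off: the paper's argument is shorter and needs only the qualitative interlacing plus Lemma \ref{l5.1}, whereas yours avoids invoking interlacing of the restricted subsequences altogether (only the asymptotics enter) and makes visible exactly where the Dirichlet/Robin index shift of $1/2$ consumes the remaining half-logarithm; it is also the style of estimate the paper itself resorts to in the harder Corollaries \ref{t5.9} and \ref{t5.8}.
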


For the case $\ell=0$, a similar result in Corollary \ref{t5.7} was given by Hatino\v{g}lu \cite[Theorems 4.2 and 4.11]{Hat2021}. Compared with Theorem 4.11 in \cite{Hat2021}, the required eigenvalue information in (2) of Corollary \ref{t5.7} can be reduced by one.

\begin{proof}[Proof of Corollary \ref{t5.7}]
It is well known that the elements in the two sets $\sigma(\ell, q, \beta_1)$ and $\sigma(\ell, q, \beta_2)$ appear interleaved on the real axis.
Therefore, one of the following two cases holds.

Case 1:
$$n_{\{\lambda_{\ell, \beta_{1}, n}(q)\}_{n\in S_0}}(t^{2})
    \geq n_{\{\lambda_{\ell, \beta_{2}, n}(q)\}_{n\in S_0}}(t^{2})
    \geq n_{\{\lambda_{\ell, \infty, n}(q)\}_{n\in S_0}}(t^{2}),$$
for all sufficiently large $t>0$, where the set $S_0$ is taken as $\mathbb{N}$, $M$, and $\mathbb{N}\setminus M$, respectively.

Case 2:
$$n_{\{\lambda_{\ell, \beta_{2}, n}(q)\}_{n\in S_0}}(t^{2})
    \geq n_{\{\lambda_{\ell, \beta_{1}, n}(q)\}_{n\in S_0}}(t^{2})
    \geq n_{\{\lambda_{\ell, \infty, n}(q)\}_{n\in S_0}}(t^{2}),$$
for all sufficiently large $t>0$, where the set $S_0$ is taken as $\mathbb{N}$, $M$, and $\mathbb{N}\setminus M$, respectively.

Let 
$$m(t)=2n_{\sigma(\ell, q, \beta_1)}(t^2)+2n_{\{\lambda_{\ell, \beta_{1}, n}(q)\}_{n\in M}}(t^2)+2n_{\{\lambda_{\ell,\beta_{2}, n}(q)\}_{n\in\mathbb{N}\setminus \tilde{M}}}(t^2).$$
We have 
\begin{align*}
m(t)\geq2n_{\sigma(\ell, q, \beta_1)}(t^2)+2n_{\{\lambda_{\ell, \beta_{1}, n}(q)\}_{n\in M}}(t^2)+2n_{\{\lambda_{\ell,\beta_{2}, n}(q)\}_{n\in\mathbb{N}\setminus M}}(t^2)-2\chi(\beta_{1},\beta_{2}),
\end{align*}
for all sufficiently large $t>0$,
where
\begin{align*}
\tilde{M}:=
\begin{cases}
M,
&~\textnormal{if}~\beta_{1}=\infty~\textnormal{or}~\beta_{2}=\infty,\\
M\cup\{k\},
&~\textnormal{if}~\beta_{1}\neq\infty~\textnormal{and}~\beta_{2}\neq\infty,
\end{cases}
\end{align*}
and
\begin{align}\label{e5.17}
\chi(\beta_{1},\beta_{2}):=
\begin{cases}
0,
~\textnormal{if}~\beta_{1}=\infty~\textnormal{or}~\beta_{2}=\infty,\\
1,
~\textnormal{if}~\beta_{1}\neq\infty~\textnormal{and}~\beta_{2}\neq\infty.
\end{cases}
\end{align}
Then, one has 
\begin{align}\label{e5.9}
m(t)\geq 
\begin{cases}
4n_{\sigma(\ell, q, \beta_2)}(t^2)-2\chi(\beta_{1},\beta_{2}),
~\textnormal{if~case~1~holds},\\
4n_{\sigma(\ell, q, \beta_1)}(t^2)-2\chi(\beta_{1},\beta_{2}),
~\textnormal{if~case~2~holds},
\end{cases}
\end{align}
for all sufficiently large $t>0$. The inequalities \eqref{e5.1}, \eqref{e5.2}, and \eqref{e5.9} give  
\begin{align*}
\int_{0}^{r}\frac{m(t)}{t}dt
-\frac{4r}{\pi}
+(2\ell+2)\ln r 
 >-\infty, 
~~r\rightarrow\infty.
\end{align*}
By Theorem \ref{t4.3}, we have  that the conclusions (1) and (2) hold.
This completes the proof.
\end{proof}

\begin{remark}\label{r5.4}
Consider the situation $\ell=-1/2$ and $q \in L^{1}(0,1)$. If $\beta_{1}\neq\infty$ and $\beta_{2}\neq\infty$, 
then the result of Corollary \ref{t5.7} holds provided that an additional eigenvalue is added to the original condition.
If $\beta_{1}=\infty$ or $\beta_{2}=\infty$, 
then the result of Corollary \ref{t5.7} holds provided that there exists a positive constant $c$ such that 
\begin{align*}
\sum_{\substack{k \in \mathbb{N}\setminus A \\ |\lambda_{\ell, \infty, k}|\leq r^{2}}} 
\int_{\sqrt{\lambda_{\ell,\beta,k}}}^{\sqrt{\lambda_{\ell, \infty, k}}}
\frac{1}{t}dt>c\ln r, 
~~r\rightarrow\infty.
\end{align*}
\end{remark}

In Corollary \ref{t5.7}, for the subset of the second spectrum $\sigma(l,q,\beta_2)$ that we have provided, we actually know the index of each eigenvalue, that is, we know which position a certain eigenvalue occupies in the sequence $\sigma(l,q,\beta_2)$. Next, in Corollaries \ref{t5.9} and \ref{t5.8}, we consider the situation where the eigenvalues in a subset of $\sigma(l,q,\beta_2)$ are given, but the indices of these eigenvalues are unknown; such conditions for the uniqueness of the inverse problem were  provided by Hatino\v{g}lu \cite{Hat2021} for the Sturm-Liouville operator.

\begin{corollary}\label{t5.9}
Let $\ell\geq-1/2$, $q\in L_{\ell}^{1}(0,1)$, $\beta_{1}$, $\beta_{2}\in\mathbb{R}\cup\{\infty\}$, $\beta_{1}\neq\beta_{2}$, 
$A=\{a_{n}\}_{n\in\mathbb{N}}\subseteq\sigma(\ell, q, \beta_1)$, and
$B=\{b_{n}\}_{n \in \mathbb{N}}\subseteq\sigma(\ell, q, \beta_2)$, 
such that 
\begin{align*}
\lim_{m \rightarrow \infty}\sum_{n=1}^m\frac{|A_{n, m}-A_{n}|}{ a_{n}^2}<\infty,
\end{align*}
and
$
\{A_{n}/a_{n}^2\}_{n \in \mathbb{N}}\in l^{1}
$,
where
\begin{align*}
A_{n, m}:=\frac{a_n}{b_n}\left(a_n-b_n\right) 
\prod_{\substack{j=1\\j \neq n}}^m \frac{a_j}{b_j} \frac{a_n-b_j}{a_n-a_j},
~~A_n:=\frac{a_n}{b_n}\left(a_n-b_n\right) 
\prod_{\substack{j=1 \\j \neq n}}^{\infty} \frac{a_j}{b_j} \frac{a_n-b_j}{a_n-a_j}.
\end{align*}
Then the sets $\sigma(\ell, q, \beta_1)$, 
$\{\zeta(a_{n},\beta_{1}, q)\}_{n\in \mathbb{N}}$ and 
$\sigma(\ell, q, \beta_2)\setminus B$ uniquely determine $q$.
\end{corollary}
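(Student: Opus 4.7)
The plan is to reduce the problem to a setting covered by Corollary \ref{t5.7}(1) by showing that $\sigma(\ell, q, \beta_2) = \sigma(\ell, \hat{q}, \beta_2)$ for any competitor $\hat{q}\in L_{\ell}^{1}(0,1)$ satisfying the same hypotheses. First, since $\sigma(\ell, q, \beta_1) = \sigma(\ell, \hat{q}, \beta_1)$, the characteristic functions $\Delta_{\beta_1}(\lambda, q)$ and $\Delta_{\beta_1}(\lambda, \hat{q})$ are entire functions of order $1/2$ with identical zero sets and identical leading asymptotics determined by \eqref{e1.5}--\eqref{e1.6}; Hadamard's theorem forces them to coincide, and in particular $\dot{\Delta}_{\beta_1}(a_n, q) = \dot{\Delta}_{\beta_1}(a_n, \hat{q})$ for every $a_n \in A$.

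Next, I would combine Lemma \ref{l2.2} with the identity $\Delta_{\beta_2}(\lambda, q) - \Delta_{\beta_1}(\lambda, q) = (\beta_2 - \beta_1)\,\phi_{\ell}(\lambda, 1, q)$ (the cases with $\beta_i = \infty$ being handled analogously via $\Delta_{\infty}(\lambda, q) = \phi_{\ell}(\lambda, 1, q)$) to derive the explicit formula
\begin{equation*}
\zeta(a_n, \beta_1, q)
= -\frac{(\beta_2 - \beta_1)\,\dot{\Delta}_{\beta_1}(a_n, q)}{\Delta_{\beta_2}(a_n, q)}.
\end{equation*}
Equating this expression for $q$ and $\hat{q}$ and invoking the previous step then yields $\Delta_{\beta_2}(a_n, q) = \Delta_{\beta_2}(a_n, \hat{q})$ for every $a_n \in A$.

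The core of the proof is to recognize $A_n$ and $A_{n, m}$ of the statement as, respectively, the full and the $m$-fold truncated Hadamard product representations of $\Delta_{\beta_2}(a_n, q)/\dot{\Delta}_{\beta_1}(a_n, q)$. Since the operators have order $1/2$ and $\sum_{k} 1/|\lambda_k| < \infty$, the canonical products
\begin{equation*}
\Delta_{\beta_1}(\lambda, q)
= c_1\,\lambda^{k_0}\prod_{k}\left(1 - \frac{\lambda}{a_k}\right),
\qquad
\Delta_{\beta_2}(\lambda, q)
= c_2\,\lambda^{k_0'}\prod_{k}\left(1 - \frac{\lambda}{b_k}\right)
\end{equation*}
converge absolutely, and a routine manipulation yields $\Delta_{\beta_2}(a_n, q)/\dot{\Delta}_{\beta_1}(a_n, q) = -(c_2/c_1)\,A_n$. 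The summability hypotheses $\{A_n/a_n^2\}\in l^1$ and $\lim_{m\to\infty}\sum_{n=1}^{m}|A_{n, m} - A_n|/a_n^2 < \infty$ are exactly what legitimize the infinite product and the passage to the limit from the truncations. Combining these identifications, the equalities $A_n(q) = A_n(\hat{q})$ together with $b_k(q) = b_k(\hat{q})$ for $b_k \notin B$ produce a system of constraints which, following the line of argument in \cite{Hat2021}, forces the unknown eigenvalues in $B$ to agree for $q$ and $\hat{q}$. This gives $\sigma(\ell, q, \beta_2) = \sigma(\ell, \hat{q}, \beta_2)$, and Corollary \ref{t5.7}(1) then yields $q = \hat{q}$ a.e.

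The main obstacle is the step just described: proving that no two distinct admissible choices of the missing eigenvalues in $B$ can produce the same sequence $\{A_n\}_{n\in\mathbb{N}}$. This is exactly where both summability hypotheses are consumed, together with the strict interlacing of the two spectra. A secondary technical difficulty specific to this setting, compared to the regular Sturm-Liouville case of \cite{Hat2021}, is the $\lambda^{-(\ell+1)/2}$-type normalization of the solutions $\phi_{\ell}(\lambda, x, q)$ inherited from the Bessel function asymptotics, which must be reflected in the leading constants $c_1$, $c_2$ and the possible multiplicities $k_0$, $k_0'$ of $\lambda = 0$ as a zero of the two characteristic functions.
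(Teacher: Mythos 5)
Your proposal diverges fundamentally from the paper's argument, and its central step is a genuine gap rather than a deferred technicality. The paper never attempts to recover the missing eigenvalues in $B$. Instead it treats Corollary \ref{t5.9} as a pure counting problem: it sets $m(t)=2n_{\sigma(\ell,q,\beta_1)}(t^2)+2n_A(t^2)+2n_{\sigma(\ell,q,\beta_2)\setminus B}(t^2)$, rewrites this as $2n_{\sigma(\ell,q,\beta_1)}(t^2)+2n_{\sigma(\ell,q,\beta_2)}(t^2)-2\bigl(n_B(t^2)-n_A(t^2)\bigr)$, and controls the deficit term by Jensen's formula applied to a meromorphic function $G$ with poles at $A$, zeros at $B$, and residues $A_n$ (Lemma \ref{l5.3}, whose hypothesis $\sum_n|A_n|/a_n^2<\infty$ is exactly where the two summability conditions enter, via Lemma \ref{l5.4} and the proof of \cite[Lemma 4.5]{Hat2021}). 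This gives $\int_0^r (n_B(t^2)-n_A(t^2))/t\,dt\leq\frac12\ln r+C$, which combined with \eqref{e5.1}--\eqref{e5.2} verifies the density condition of Theorem \ref{t4.3}. Knowing the norming constants on $A$ is used only to count $A$ twice in $m(t)$ — not to produce functional equations for $\Delta_{\beta_2}$.

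Two concrete defects in your plan. First, the identification $A_n=-(c_1/c_2)\,\Delta_{\beta_2}(a_n,q)/\dot{\Delta}_{\beta_1}(a_n,q)$ is false in the setting of the corollary: the products defining $A_n$ and $A_{n,m}$ run only over the \emph{subsets} $A$ and $B$, so $A_n$ is the residue at $a_n$ of the partial Blaschke-type product $\prod_j\frac{a_j}{b_j}\frac{z-b_j}{z-a_j}$, not of the ratio of the full canonical products of the two characteristic functions (your formula would require $A=\sigma(\ell,q,\beta_1)$ and $B=\sigma(\ell,q,\beta_2)$). Second, the assertion that the constraints ``force the unknown eigenvalues in $B$ to agree'' is precisely the content you would need to prove, and neither this paper nor \cite{Hat2021} proves it; the whole design of the statement (unknown indices, only a logarithmic Jensen bound on $n_B-n_A$) is built to avoid reconstructing $B$. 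Even if that step were granted, your fallback to Corollary \ref{t5.7}(1) only covers the case where one of $\beta_1,\beta_2$ is $\infty$, whereas Corollary \ref{t5.9} allows both to be finite. Your preliminary computations (Hadamard factorization of $\Delta_{\beta_1}$, the identity $\zeta(a_n,\beta_1,q)=-(\beta_2-\beta_1)\dot{\Delta}_{\beta_1}(a_n,q)/\Delta_{\beta_2}(a_n,q)$ via Lemma \ref{l2.2}) are correct and could be useful elsewhere, but they do not lead to the stated conclusion along the route you describe.
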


For the case $\ell=0$, a similar result in Corollary \ref{t5.9} was given by Hatino\v{g}lu \cite[Theorems 4.6 and 4.13]{Hat2021}. There, to obtain uniqueness, an additional eigenvalue datum is required; moreover, when $\beta_{1}\neq\infty$ and $\beta_{2}\neq\infty$, there is an additional restriction on the selection of the eigenvalues. However, these conditions are not necessary in Corollary \ref{t5.9}. Before proving Corollary \ref{t5.9}, we first present the following two lemmas.

\begin{lemma}\label{l5.3}
Suppose that the meromorphic function $G$ is given by
\begin{align*}
G(z)=a z^2+b z+c+
\sum_{n\in\mathbb{N}} A_n\left(\frac{1}{z-a_n}+\frac{1}{a_n}\right),
\end{align*}
where $a$, $b$, $c$, $A_{n}\in\mathbb{R}$, and $\sum\limits_{n \in \mathbb{N}}|A_n|/a_n^2<\infty$. Then there exists a constant $C$ such that
\begin{align}\label{e5.16}
\int_0^r \frac{n_B\left(t^2\right)}{t} d t-\int_0^r \frac{n_A\left(t^2\right)}{t}dt \leq \frac{1}{2}\ln r+C,
\end{align}
for all sufficiently large $r>0$, where $B$ and $A$ are the sets of zeros and poles of $G$, respectively.
\end{lemma}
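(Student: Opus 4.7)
I would attack the lemma via Jensen's formula for meromorphic functions applied to $G$ on a disk of large radius, then reduce it to a circle-mean estimate of $\ln|G|$ that exploits the specific Mittag--Leffler-type structure of $G$.

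After the harmless reduction to the case $G(0) \neq 0, \infty$ (any residual pole or zero at $0$ is factored out, contributing only $O(\ln r)$ that is absorbed into $C$), Jensen's formula gives, for every $R > 0$,
\begin{align*}
\int_0^R \frac{n_B(s) - n_A(s)}{s}\, ds
\;=\; \frac{1}{2\pi}\int_0^{2\pi} \ln|G(Re^{i\theta})|\, d\theta - \ln|G(0)|.
\end{align*}
The substitution $s = t^2$ on the left-hand side, with $R = r^2$, converts this into
\begin{align*}
\int_0^r \frac{n_B(t^2) - n_A(t^2)}{t}\, dt
\;=\; \frac{1}{2}\int_0^{r^2} \frac{n_B(s) - n_A(s)}{s}\, ds
\;=\; \frac{1}{4\pi}\int_0^{2\pi} \ln|G(r^2 e^{i\theta})|\, d\theta + O(1),
\end{align*}
so the inequality of the lemma reduces to establishing a suitable upper bound on the circle mean $\frac{1}{2\pi}\int_0^{2\pi}\ln|G(Re^{i\theta})|\, d\theta$ for large $R$.

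For the mean-value estimate I would decompose $G = P + \phi$, where $P(z) = az^2 + bz + c$ is the polynomial piece and $\phi(z) = \sum_n A_n\bigl(\frac{1}{z-a_n} + \frac{1}{a_n}\bigr) = \sum_n \frac{A_n z}{a_n(z-a_n)}$ is the Mittag--Leffler-type principal part. For $|z| = R$ I would split the series at $|a_n| = 2R$: on the tail $|a_n| > 2R$ one has $|z - a_n| \geq |a_n|/2$, so the tail is dominated by $2R\sum_n |A_n|/a_n^2 = O(R)$ using the summability hypothesis; the remaining (finite) head, coming from the poles near the circle, is handled via the classical Jensen-type mean-value identity $\frac{1}{2\pi}\int_0^{2\pi}\ln|Re^{i\theta} - a|\, d\theta = \ln\max(R, |a|)$ applied term-by-term.

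\emph{Main obstacle.} The delicate point is recovering the sharp logarithmic coefficient on the right-hand side. A direct triangle-inequality bound on $|G|$ gives $|G(z)| = O(|z|^2)$ and hence $\ln|G(Re^{i\theta})| \leq 2\ln R + O(1)$ pointwise, which after the change of variable produces a larger coefficient on $\ln r$ than the $\frac{1}{2}$ claimed. Sharpening to the stated bound requires exploiting simultaneously the reality $\overline{G(\bar z)} = G(z)$ forced by $a, b, c, A_n \in \mathbb{R}$, the Mittag--Leffler cancellation inherent in $\phi$, and the quantitative summability $\sum_n |A_n|/a_n^2 < \infty$: applying the mean-value identity term-by-term to $\phi$ and recombining with the polynomial contribution should show that the naive quadratic growth cancels in the circle mean, leaving only the logarithmic residual $\frac{1}{2}\ln r + O(1)$. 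Carrying out this cancellation---and tracking it carefully through the preliminary reduction at $0$---is where the technical substance of the proof lies.
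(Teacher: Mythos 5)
Your overall strategy (Jensen's formula for meromorphic functions followed by an upper bound on the circle mean of $\ln|G|$) is the same as the paper's, but your write-up has a genuine gap at exactly the point you flag: the passage from the naive bound $\frac{1}{4\pi}\int_0^{2\pi}\ln|G(r^2e^{i\theta})|\,d\theta\le 2\ln r+O(1)$ to the claimed $\frac12\ln r+O(1)$ is only asserted (``should show that the naive quadratic growth cancels in the circle mean'') and, worse, the cancellation you hope for does not exist. Jensen's formula itself says that the circle mean of $\ln|G|$ at radius $R$ \emph{equals} $\ln|G(0)|+\int_0^R\bigl(n_B(t)-n_A(t)\bigr)t^{-1}\,dt$, so an upper bound of $\tfrac12\ln R$ on that mean is literally equivalent to the inequality you are trying to prove; recombining the term-by-term mean-value identities gives no independent leverage. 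Concretely, when $a\neq0$ the identity you quote yields $\frac{1}{2\pi}\int_0^{2\pi}\ln|aR^2e^{2i\theta}+bRe^{i\theta}+c|\,d\theta=2\ln R+O(1)$, while the Mittag--Leffler part obeys $|\phi(Re^{i\theta})|\le \frac{2R}{|\sin\theta|}\sum_n|A_n|/a_n^2$, which is $o(R^2)$ on any arc $|\sin\theta|\ge\delta$; hence $|G|\ge\tfrac{|a|}{2}R^2$ there and the mean cannot drop to $\tfrac12\ln R$. Correspondingly, a small Mittag--Leffler perturbation of $z^2+1$ with sparse real poles has $n_B(s)-n_A(s)\to2$, so the left-hand side of \eqref{e5.16} grows like $2\ln r$: the estimate you are after is simply not attainable for a genuine degree-two polynomial part, and reality of the coefficients does not help.

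The paper's own proof reaches $\tfrac12\ln r$ by a different accounting of the change of variables: it writes $\int_0^r n_B(t^2)t^{-1}dt=\tfrac12\int_0^{\sqrt r}n_B(t)t^{-1}dt$ and therefore evaluates Jensen at radius $\sqrt r$, where $|G(\sqrt r e^{i\varphi})|=O(r/|\sin\varphi|)$ and the prefactor $\frac{1}{4\pi}$ converts $\ln r$ into $\tfrac12\ln r$ after checking that $\ln(C_5+C_6/|\sin\varphi|)$ is integrable. Your substitution $s=t^2$ gives $\tfrac12\int_0^{r^2}$, not $\tfrac12\int_0^{\sqrt r}$, and this discrepancy is precisely the source of your obstacle: one of the two computations is wrong, and if yours is the correct one the lemma requires an additional hypothesis (for instance $a=b=0$, or interlacing of $A$ and $B$) for the stated constant $\tfrac12$. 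Either way, as written your proposal does not establish \eqref{e5.16}.
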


\begin{proof}
Recall the Jensen formula for the meromorphic function:
Let $f(z)$ be meromorphic for $|z|<R$, $f(0) \neq 0$ and $f(0) \neq \infty$. Then, for $0<r<R$, one has 
\begin{align*}
\int_0^r \frac{n_{\{z\mid f(z)=0\}}(t)}{t}dt
-\int_0^r \frac{n_{\{z\mid f(z)= \infty\}}(t)}{t}dt=\frac{1}{2\pi} \int_0^{2 \pi} \ln \left|f\left(r e^{i \varphi}\right)\right| d \varphi-\ln|f(0)|.
\end{align*}

Without loss of generality, let $G(0) \neq 0$ and $G(0) \neq \infty$. By applying the Jensen formula to $G$, we get
\begin{align*}
&\int_0^r \frac{n_B\left(t^2\right)}{t} d t-\int_0^r \frac{n_A\left(t^2\right)}{t}dt
=\frac{1}{2}\left(\int_0^{\sqrt{r}}\frac{n_B(t)}{t}dt
-\int_0^{\sqrt{r}} \frac{n_A(t)}{t}dt\right)\\
\leq& \frac{1}{4\pi} 
\int_0^{2\pi} \ln| G(\sqrt{r} e^{i\varphi})|d\varphi
+\frac{1}{2}|\ln| G(0)|| \\
\leq & \frac{1}{4\pi} 
\int_0^{2 \pi} \ln \left(|a|r+|b|\sqrt{r}+|c|
+\sum_{n\in\mathbb{N}} \frac{|A_n|\sqrt{r}}{|\sqrt{r} e^{i \varphi}-a_n||a_n|}\right)d\varphi
+\frac{1}{2}|\ln|G(0)|| \\
\leq& \frac{1}{4\pi} 
\int_0^{2 \pi}\ln\left(|a|r+|b|\sqrt{r}+|c|
+\frac{\sqrt{r}}{|\sin \varphi|}
\sum_{n\in\mathbb{N}}\frac{|A_n|}{a_n^2}\right)d\varphi
+\frac{1}{2}|\ln|G(0)||. 
\end{align*}
Since $\sum_{n \in \mathbb{N}}|A_n|/a_n^2<\infty$, there exist constants $C_{5}$, $C_{6}$, $C_{7}> 0$ that do not depend on $r$, such that
\begin{align*}
\int_0^r \frac{n_B(t^2)}{t} d t-\int_0^r \frac{n_A(t^2)}{t}dt
\leq \frac{1}{4\pi}
\int_0^{2 \pi}\ln\left(C_5+\frac{C_6}{|\sin \varphi|}\right)d\varphi+\frac{1}{2}\ln r+C_7,
\end{align*}
for all sufficiently large $r>0$. It can be easily verified that 
$\ln(C_5+C_6/|\sin \varphi|)$ is integrable on $(0,2\pi)$;  
therefore, we complete the proof. 
\end{proof}

\begin{lemma}\label{l5.4}
Let $\ell\geq-1/2$, $q\in L^{1}(0,1)$, $\beta_{1}$, $\beta_{2}\in\mathbb{R}\cup\{\infty\}$, $\beta_{1}\neq\beta_{2}$, 
$A=\{a_{n}\}_{n\in\mathbb{N}}\subseteq\sigma(\ell, q, \beta_1)$, and
$B=\{b_{n}\}_{n \in \mathbb{N}}\subseteq\sigma(\ell, q, \beta_2)$.
Then there exists a constant $C>0$ such that
\begin{align}\label{e5.19}
\sup_{n\in\mathbb{N}}(|b_{n}||b_{n}-a_{k}|)^{-1} \leq C|a_{k}|^{-1}
\end{align}
for all $k\in\mathbb{N}$.
\end{lemma}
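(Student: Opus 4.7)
The plan is to combine the eigenvalue asymptotic expansions \eqref{e1.3}--\eqref{e1.4} with the Sturm-Liouville interlacing of $\sigma(\ell,q,\beta_1)$ and $\sigma(\ell,q,\beta_2)$, available because $\beta_1\neq\beta_2$. Since $A\subseteq\sigma(\ell,q,\beta_1)$ and $B\subseteq\sigma(\ell,q,\beta_2)$, it suffices to prove the apparently stronger claim
\begin{align*}
|b(b-a)|\geq|a|/C\qquad\text{for all}\;a\in\sigma(\ell,q,\beta_1)\;\text{and}\;b\in\sigma(\ell,q,\beta_2).
\end{align*}
Writing $s:=\sqrt{a}$ and $t:=\sqrt{b}$ for positive eigenvalues, the factorization $|b(b-a)|=t^{2}|s-t|(s+t)$ reduces the problem to a uniform lower bound on $|s-t|$.

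First I would absorb the finitely many exceptional pairs into the constant $C$: only finitely many eigenvalues are non-positive or have small index, and on these pairs the ratio $|b(b-a)|/|a|$ takes finitely many positive values (under the implicit hypothesis $0\notin B$). For the remaining large-index pairs I would split into two cases. If exactly one of $\beta_1,\beta_2$ equals $\infty$, the leading asymptotics in \eqref{e5.4}--\eqref{e5.5} differ by $\pi/2$, giving $|s-t|\geq c>0$ uniformly, hence $|b-a|\geq c'\sqrt{|a|}$ and the claim follows. If both $\beta_j\in\mathbb{R}$ the leading asymptotics coincide; for pairs with $n\neq k$ in the natural enumeration, interlacing together with the leading asymptotic already yields $|s-t|\geq\pi/2-o(1)$. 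The delicate subcase is $n=k$: here I would expand $\Delta(\lambda,q)=\phi_{\ell}'(\lambda,1,q)+\beta\phi_{\ell}(\lambda,1,q)$ using \eqref{e1.5}--\eqref{e1.6} to obtain the refined asymptotic $\sqrt{\lambda_{\ell,\beta,n}}=(n+(\ell-1)/2)\pi+\gamma(\beta)/n+o(1/n)$ with $\gamma(\beta)$ strictly monotone in $\beta$. For $\beta_1\neq\beta_2$ this produces $|s_k-t_k|\geq c''/k$, and combined with $s+t\asymp k$ and $t^{2}\asymp|a|$ it yields $|b_k(b_k-a_k)|\geq c'''|a|$.

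The main obstacle will be this finite--finite subcase, which demands an asymptotic refinement one order beyond the $O(1/n)$ error in \eqref{e5.4}--\eqref{e5.5}, together with the monotonicity of the resulting coefficient in $\beta$. Extracting $\gamma(\beta)$ explicitly from the characteristic equation and verifying $\gamma(\beta_1)\neq\gamma(\beta_2)$ is the step that requires care; once this is settled, the rest of the estimate consists of organizing the two cases above and combining their constants into a single $C$.
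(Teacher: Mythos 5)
Your plan follows essentially the same route as the paper: reduce everything to a uniform lower bound on the eigenvalue gap, dispose of the case where one boundary condition is Dirichlet via the $\pi/2$ offset in the leading asymptotics, and treat the delicate finite--finite, equal-index case by a second-order refinement of the eigenvalue asymptotics. The step you flag as requiring care is carried out in the paper exactly as you anticipate, by writing $\beta_j=-\phi_{\ell}'(\lambda_{\ell,\beta_j,n},1,q)/\phi_{\ell}(\lambda_{\ell,\beta_j,n},1,q)$ and using \eqref{e1.5}--\eqref{e1.6} to get $\beta_2-\beta_1=n\bigl(\sqrt{\lambda_{\ell,\beta_2,n}}-\sqrt{\lambda_{\ell,\beta_1,n}}\bigr)+o(1)$, hence $\lambda_{\ell,\beta_2,n}-\lambda_{\ell,\beta_1,n}\to 2(\beta_2-\beta_1)\neq 0$, which is precisely your $\gamma(\beta)/n$ term with $\gamma$ linear (so trivially injective) in $\beta$.
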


\begin{proof}
If $\beta_{1}=\infty$ or $\beta_{2}=\infty$, from \eqref{e1.3} and \eqref{e1.4}, we can obtain
\begin{align*}
d=\inf_{n,k\in\mathbb{N}}|b_{n}-a_{k}|>0.
\end{align*}
If $\beta_{1}\neq\infty$ and $\beta_{2}\neq\infty$, from \eqref{e1.4}, \eqref{e1.5}, and \eqref{e1.6}, one can see that
\begin{align*}
\beta_{2}-\beta_{1}
=&\frac{\phi_{\ell}^{\prime}(\lambda_{\ell,\beta_1,n},1,q)}
{\phi_{\ell}(\lambda_{\ell,\beta_1,n},1,q)}
-\frac{\phi_{\ell}^{\prime}(\lambda_{\ell,\beta_2,n},1,q)}
{\phi_{\ell}(\lambda_{\ell,\beta_2,n},1,q)}\\
=&\frac{\phi_{\ell}^{\prime}\left(\lambda_{\ell, \beta_1, n}, 1, q\right) \phi_{\ell}\left(\lambda_{\ell, \beta_2, n}, 1, q\right)-\phi_{\ell}^{\prime}\left(\lambda_{\ell, \beta_2, n}, 1, q\right) \phi_{\ell}\left(\lambda_{\ell, \beta_1, n}, 1, q\right)}
{\phi_{\ell}(\lambda_{\ell,\beta_1,n},1,q) \phi_{\ell}(\lambda_{\ell, \beta_2, n}, 1, q)}\\
=& n(\sqrt{\lambda_{\ell, \beta_2, n}}
-\sqrt{\lambda_{\ell, \beta_1, n}})+o(1),
\end{align*}
and hence
$\lim_{n\rightarrow\infty}(\lambda_{\ell, \beta_2, n}-\lambda_{\ell, \beta_1, n})= 2(\beta_2-\beta_1).$
Then, one has
\begin{align*}
d=\inf_{n,k\in\mathbb{N}}|b_{n}-a_{k}|>0.
\end{align*}
Note that
$(|b_{n}||b_{n}-a_{k}|)^{-1} \leq (|b_{n}|||b_{n}|-|a_{k}||)^{-1}$.

If $|b_{n}|>|a_{k}|$, then
$(|b_{n}||b_{n}-a_{k}|)^{-1} \leq (d|a_{k}|)^{-1}$.

If $|b_{n}|<|a_{k}|$, then
\begin{align*}
(|b_{n}||b_{n}-a_{k}|)^{-1}
\leq&(|b_{n}|(|a_{k}|-|b_{n}|))^{-1} \\
\leq& \min\{((\min_{n\in\mathbb{N}}|b_{n}|)(|a_{k}|-(\min_{n\in\mathbb{N}}|b_{n}|)))^{-1}, (d(|a_{k}|-d))^{-1}\},
\end{align*}
and there exists a constant $C>d^{-1}$ such that
\begin{align*}
\limsup_{k\rightarrow\infty}|a_{k}|\min\{((\min_{n\in\mathbb{N}}|b_{n}|)(|a_{k}|-(\min_{n\in\mathbb{N}}|b_{n}|)))^{-1}, (d(|a_{k}|-d))^{-1}\}\leq C.
\end{align*}
Consequently, \eqref{e5.19} holds for all $k\in\mathbb{N}$. This completes the proof. 
\end{proof}

\begin{proof}[Proof of Corollary \ref{t5.9}]
Let 
\begin{align}\label{e5.15}
\begin{split}
m(t)
&=2n_{\sigma(\ell, q, \beta_1)}(t^2)
+2n_A(t^2)
+2n_{\sigma(\ell, q, \beta_2)\setminus B}(t^2)\\
&= 2n_{\sigma(\ell, q, \beta_1)}(t^2)
+2n_{\sigma(\ell, q, \beta_2)}(t^2)
-2(n_B(t^2)-n_A(t^2)).
\end{split}
\end{align}
From \eqref{e5.19}, the proof of \cite[Lemma 4.5]{Hat2021}, and Lemma \ref{l5.3}, we can conclude that the inequality \eqref{e5.16} holds for $A$ and $B$, where $A$ and $B$ are defined in Corollary \ref{t5.9}. Thus, the inequalities \eqref{e5.1}, \eqref{e5.2}, \eqref{e5.16} and \eqref{e5.15} give  
\begin{align*}
\int_{0}^{r}\frac{m(t)}{t}dt
-\frac{4r}{\pi}
+(2\ell+2)\ln r 
 >-\infty, 
~~r\rightarrow\infty.
\end{align*}
The prove is completed by Theorem \ref{t4.3}. 
\end{proof}

\begin{remark}\label{r5.6}
Let $\ell=-1/2$ and $q \in L^{1}(0,1)$. If $\beta_{1}\neq\infty$ and $\beta_{2}\neq\infty$, 
then the result of Corollary \ref{t5.9} also holds. 
If $\beta_{1}=\infty$ or $\beta_{2}=\infty$, then the result of Corollary \ref{t5.9} holds provided that an additional eigenvalue is added to the original condition.
\end{remark}

\begin{corollary}\label{t5.8}
Let $\ell\geq-1/2$, $q\in L_{\ell}^{1}(0,1)$, $\beta_{1}$, $\beta_{2}\in\mathbb{R}\cup\{\infty\}$, $\beta_{1}\neq\beta_{2}$, and
$A=\{a_{n}\}_{n\in\mathbb{N}}\subseteq\sigma(\ell, q, \beta_1)$, 
$B=\{b_{n}\}_{n \in \mathbb{N}}\subseteq\sigma(\ell, q, \beta_2)$, 
such that $\prod\limits_{n \in \mathbb{N}}(a_{n} / b_{n})$ is absolutely convergent. 
Then the following conclusions hold.
\begin{enumerate}[label=\textnormal{(\arabic*)}.]
\item If $\beta_{1}=\infty$ or $\beta_{2}=\infty$, then the sets $\sigma(\ell, q, \beta_1)$, 
    $\{\zeta(a_{n},\beta_{1}, q)\}_{n\in \mathbb{N}}$ and 
    $\sigma(\ell, q, \beta_2)\setminus B$ uniquely determine $q$.
\item If $\beta_{1}\neq\infty$ and $\beta_{2}\neq\infty$, then the sets  $\sigma(\ell, q, \beta_1)$, 
    $\{\zeta(a_{n},\beta_{1}, q)\}_{n\in \mathbb{N}}$ and 
    $\sigma(\ell, q, \beta_2)\setminus (B\cup\{\lambda_{\ell, \beta_{2}, k}(q)\})$ for $\lambda_{\ell, \beta_{2}, k}(q)\notin B$ uniquely determine $q$.
\end{enumerate}
\end{corollary}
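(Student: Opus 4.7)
The plan is to apply Theorem \ref{t4.3} with $a=1$, $k=0$, $p=1$ (so $1/p'=0$), in the style of the proof of Corollary \ref{t5.7}. Given another potential $\hat q$ satisfying the same conditions, set $\tilde B:=B$ in case (1) and $\tilde B:=B\cup\{\lambda_{\ell,\beta_2,k}(q)\}$ in case (2), take $\Lambda:=\sigma(\ell,q,\beta_1)\cup(\sigma(\ell,q,\beta_2)\setminus\tilde B)$ and $S:=A\subseteq\sigma(\ell,q,\beta_1)$. Because $\beta_1\neq\beta_2$ forces $\sigma(\ell,q,\beta_1)\cap\sigma(\ell,q,\beta_2)=\emptyset$, the elements of $\Lambda$ are distinct, every $\lambda_n\in\Lambda$ lies in $\sigma(\ell,q,\beta_n)\cap\sigma(\ell,\hat q,\beta_n)$, the norming constants match on $S$, and for sufficiently large $t$
\begin{align*}
m(t)=2n_{\sigma(\ell,q,\beta_1)}(t^2)+2n_{\sigma(\ell,q,\beta_2)}(t^2)-2[n_B(t^2)-n_A(t^2)]-2\chi(\beta_1,\beta_2),
\end{align*}
with $\chi$ as in \eqref{e5.17}.

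The key step — and the place where the hypothesis enters — is to show $\int_0^r[n_B(t^2)-n_A(t^2)]/t\,dt=O(1)$ as $r\to\infty$. The absolute convergence of $\prod_n(a_n/b_n)$, combined with $a_n,b_n\sim\pi^2n^2$, yields $\sum_n\lvert 1/a_n-1/b_n\rvert<\infty$, so
\begin{align*}
G(z):=\prod_n\frac{z-a_n}{z-b_n}=L\prod_n\frac{1-z/a_n}{1-z/b_n},\qquad L:=\prod_n\frac{a_n}{b_n},
\end{align*}
is meromorphic on $\mathbb{C}$ with simple zeros $A$ and simple poles $B$. Applying Jensen's formula to $G$ on the disk of radius $R$ (assuming WLOG $G(0)$ finite and nonzero) and the mean-value identity $(2\pi)^{-1}\int_0^{2\pi}\ln\lvert 1-Re^{i\varphi}/c\rvert\,d\varphi=\max(0,\ln(R/\lvert c\rvert))$ factor by factor, one obtains
\begin{align*}
\int_0^R\frac{n_A(t)-n_B(t)}{t}\,dt=\ln\lvert L\rvert+\sum_n\bigl[\max(0,\ln(R/\lvert a_n\rvert))-\max(0,\ln(R/\lvert b_n\rvert))\bigr]+O(1).
\end{align*}
A short case check ($R\leq\min(\lvert a_n\rvert,\lvert b_n\rvert)$, $R\geq\max(\lvert a_n\rvert,\lvert b_n\rvert)$, or $R$ in between) shows each summand is bounded in modulus by $\lvert\ln(a_n/b_n)\rvert$, so the right-hand side is uniformly bounded in $R$; the substitution $u=t^2$ delivers the claimed $O(1)$ bound.

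Inserting this into $\int_0^rm(t)/t\,dt$ and combining with the asymptotic lower bounds \eqref{e5.1} and \eqref{e5.2} (noting that the term $-2\chi(\beta_1,\beta_2)$ contributes an extra $-2\ln r$ only in case (2)) yields, in both cases,
\begin{align*}
\int_0^r\frac{m(t)}{t}\,dt-\frac{4r}{\pi}+(2\ell+2)\ln r\geq O(1)>-\infty,\qquad r\to\infty,
\end{align*}
so condition \eqref{e1.0} of Theorem \ref{t4.3} is verified and $q=\hat q$ a.e. on $(0,1)$. The main obstacle is the uniform bound on $\int_0^{2\pi}\ln\lvert G(Re^{i\varphi})\rvert\,d\varphi$: the ``boundary'' summands, where $R$ falls between $\lvert a_n\rvert$ and $\lvert b_n\rvert$, require the summability $\sum_n\lvert\ln(a_n/b_n)\rvert<\infty$ inherent in the absolute convergence hypothesis to avoid accumulating a logarithmic loss, and it is precisely this sharper $O(1)$ estimate (as opposed to the $\tfrac12\ln R+C$ bound of Lemma \ref{l5.3}) that lets case (2) survive the removal of just one additional eigenvalue.
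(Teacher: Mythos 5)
Your proposal is correct and follows essentially the same route as the paper: the same choice of $m(t)$ with $S=A$ and $\widetilde{B}$, the same reduction to showing $\int_0^r[n_B(t^2)-n_A(t^2)]/t\,dt=O(1)$, and the same term-by-term case check bounding each difference $\max(0,\ln(R/|a_n|))-\max(0,\ln(R/|b_n|))$ by $|\ln(a_n/b_n)|$, which is precisely the paper's Lemma \ref{l5.2}. Your detour through Jensen's formula for the meromorphic product $G$ is harmless but superfluous, since the identity $\int_0^R n_A(t)/t\,dt=\sum_n\max(0,\ln(R/|a_n|))$ is elementary and is what the paper uses directly.
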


For the case $\ell=0$, the result in Corollary \ref{t5.8} is due to Hatino\v{g}lu \cite[Theorems 4.8 and 4.14]{Hat2021}. The condition involving an additional restriction on the selection of eigenvalues in \cite[Theorem 4.14]{Hat2021} is not necessary in Corollary \ref{t5.8}. Moreover, compared with \cite[Theorem 4.14]{Hat2021}, the required eigenvalue information in (2) of Corollary \ref{t5.8} can be reduced by one. Before proving Corollary \ref{t5.8}, we first present the following lemma.

\begin{lemma}\label{l5.2}
Suppose the sequences $A=\{a_n\}_{n\in\mathbb{N}}$ and $B=\{b_n\}_{n\in\mathbb{N}}$ are such that $a_{n}=b_{n}+t_{n}b_{n}$, where $\sum_{n=1}^{\infty}|t_n|<\infty$. Then  there exists a  constant $C>0$, such that
\begin{equation}\label{e5.12}
\left|\int_0^r \frac{n_A(t^2)}{t}dt
-\int_0^r \frac{n_B(t^2)}{t}dt\right| \leq C,
\end{equation}
for all $r>0$.
\end{lemma}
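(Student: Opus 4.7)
The plan is to avoid any appeal to Jensen's formula (in contrast to Lemma \ref{l5.3}) and instead evaluate both integrals as explicit series by Fubini. For any real sequence $\{c_n\}$ with $|c_n|\to\infty$, swapping sum and integral yields
$$\int_0^r \frac{n_C(t^2)}{t}\,dt = \sum_{n}\left(\ln\frac{r}{\sqrt{|c_n|}}\right)_{+},$$
where $x_+:=\max(x,0)$ and each term vanishes once $\sqrt{|c_n|}>r$, so for every fixed $r>0$ the series reduces to a finite sum. The hypothesis $\sum_n|t_n|<\infty$ forces $t_n\to 0$, so $a_n=(1+t_n)b_n$ inherits $|a_n|\to\infty$ from $|b_n|\to\infty$, and the identity applies to both $A$ and $B$.

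Subtracting, I would write $\int_0^r(n_A(t^2)-n_B(t^2))\,t^{-1}\,dt = \sum_n \Delta_n(r)$ with
$$\Delta_n(r) := \left(\ln\frac{r}{\sqrt{|a_n|}}\right)_{+} - \left(\ln\frac{r}{\sqrt{|b_n|}}\right)_{+},$$
and analyse $\Delta_n(r)$ in three cases, according to the position of $r$ relative to $\sqrt{|a_n|}$ and $\sqrt{|b_n|}$. If both square roots are $\leq r$, both positive parts open and $\Delta_n(r)=\frac12\ln(|b_n|/|a_n|)=-\frac12\ln|1+t_n|$; if both exceed $r$, then $\Delta_n(r)=0$; in the mixed case, where $r$ strictly separates $\sqrt{|a_n|}$ from $\sqrt{|b_n|}$, monotonicity of $\ln$ squeezes $|\Delta_n(r)|\leq\frac12|\ln|1+t_n||$.

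Combining the three cases, $\left|\sum_n\Delta_n(r)\right|\leq\frac12\sum_n|\ln|1+t_n||$ uniformly in $r>0$. To close the argument, $\sum_n|t_n|<\infty$ forces $|t_n|\leq 1/2$ for all but finitely many $n$, and on that tail the elementary bound $|\ln|1+t_n||\leq 2|t_n|$ secures summability; the finitely many exceptional indices --- which we may take to satisfy $t_n\neq -1$, since $t_n=-1$ would force the degenerate $a_n=0$ --- contribute only a finite constant. Choosing $C$ to be the resulting total bound yields \eqref{e5.12}.

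The main (mild) obstacle is the mixed case: one has to verify in each subcase that the sign of $\ln|1+t_n|$ matches the direction of the relevant inequality between $r$, $\sqrt{|a_n|}$, and $\sqrt{|b_n|}$, so that the positive-part clipping cannot destroy cancellation beyond the $\frac12|\ln|1+t_n||$ level. Apart from this elementary bookkeeping, the proof is purely arithmetic and requires none of the complex-analytic machinery used in Lemma \ref{l5.3}.
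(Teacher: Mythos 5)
Your proof is correct and follows essentially the same route as the paper's: both rewrite each integral as $\sum_n \max\{0,\ln r-\tfrac12\ln|c_n|\}$, compare termwise with the same case analysis to get $|d_n|\leq\tfrac12|\ln|1+t_n||$, and sum using $\sum_n|t_n|<\infty$. Your added justification that $\sum_n|\ln|1+t_n||<\infty$ (via $|\ln|1+t_n||\leq 2|t_n|$ on the tail) is a small refinement of a step the paper leaves implicit.
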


\begin{proof}
Note that 
\begin{equation*}
\int_0^r \frac{n_A(t^2)}{t}dt
=\sum_{n=1}^{\infty}\max \left\{0,\ln r-\frac{1}{2}\ln|a_n|\right\}, 
\end{equation*}
and 
\begin{equation*}
\int_0^r \frac{n_B(t^2)}{t}dt
=\sum_{n=1}^{\infty}\max\left\{0,\ln r-\frac{1}{2}\ln|b_n|\right\}.
\end{equation*}
Let
\begin{align*}
d_n:=\max\left\{0,\ln r-\frac{1}{2}\ln|a_n|\right\}
-\max\left\{0,\ln r-\frac{1}{2} \ln|b_n|\right\},
\end{align*}
then, one has 
\begin{align*}
\left|\int_0^r \frac{n_A(t^2)}{t}dt
-\int_0^r \frac{n_B(t^2)}{t}dt\right|
=\left|\sum_{n=1}^{\infty}d_n\right| .
\end{align*}
Without loss of generality, let $1+t_n\neq0$ for any $n\in\mathbb{N}$. Then, for each $n$, we have
\begin{enumerate}
  \item [\textbullet]If $|b_n|<r^2$ and $|a_n|<r^2$, then $|d_n|=\frac{1}{2}\left|\ln\frac{|b_n|}{|a_n|}\right|=\frac{1}{2}|\ln|1+t_n||$.
  \item [\textbullet]If $|b_n|<r^2\leq|a_n|$, then $\frac{1}{2}\ln\frac{|b_n|}{|a_n|}\leq d_n<0$, so $|d_n|\leq \frac{1}{2}|\ln| 1+t_n||$.
  \item [\textbullet]If $|a_n|<r^2\leq|b_n|$, then $0<d_n\leq \frac{1}{2} \ln\frac{|b_n|}{|a_n|}$, so $|d_n|\leq\frac{1}{2}|\ln|1+t_n||$.
  \item [\textbullet]If $b_n \geq r^2$ and $a_n \geq r^2$, then $d_n=0$. 
\end{enumerate}
Consequently,
$|d_n|\leq\frac{1}{2}|\ln|1+t_n||$ for each $n$.
The condition $\sum\limits_{n=1}^{\infty}\left|t_n\right|<\infty$ gives 
\begin{align*}
\left|\sum_{n=1}^{\infty} d_n\right| 
\leq \sum_{n=1}^{\infty}|d_n| 
\leq \sum_{n=1}^{\infty}\frac{1}{2}|\ln|1+t_n||<\infty,
\end{align*}
that is, there exists a  constant $C>0$, such that \eqref{e5.12} holds. This completes the proof.
\end{proof}

\begin{proof}[Proof of Corollary \ref{t5.8}]
Let 
\begin{align*}
m(t)
=2n_{\sigma(\ell, q, \beta_1)}(t^2)
+2n_A(t^2)
+2n_{\sigma(\ell, q, \beta_2)\setminus \tilde{B}}(t^2).
\end{align*}
We have
\begin{align}\label{e5.13}
m(t)
=2n_{\sigma(\ell, q, \beta_1)}(t^2)
+2n_{\sigma(\ell, q, \beta_2)}(t^2)
+2n_A(t^2)-2n_B(t^2)-2\chi(\beta_{1},\beta_{2}),
\end{align}
for all sufficiently large $t>0$,
where
\begin{align*}
\tilde{B}:=
\begin{cases}
B,
&~\textnormal{if}~\beta_{1}=\infty~\textnormal{or}~\beta_{2}=\infty,\\
B\cup\{\lambda_{\ell, \beta_{2}, k}(q)\},
&~\textnormal{if}~\beta_{1}\neq\infty~\textnormal{and}~\beta_{2}\neq\infty,
\end{cases}
\end{align*}
and $\chi(\beta_{1},\beta_{2})$ is defined in \eqref{e5.17}. 
Then the inequalities \eqref{e5.1}, \eqref{e5.2}, \eqref{e5.13}, and Lemma \ref{l5.2} give  
\begin{align*}
\int_{0}^{r}\frac{m(t)}{t}dt
-\frac{4r}{\pi}
+(2\ell+2)\ln r 
 >-\infty, 
~~r\rightarrow\infty.
\end{align*}
The prove is completed by Theorem \ref{t4.3}. 
\end{proof}

\begin{remark}\label{r5.5}
Let $\ell=-1/2$ and $q \in L^{1}(0,1)$. If $\beta_{1}=\infty$ or $\beta_{2}=\infty$, then the result of Corollary \ref{t5.8} also holds.
If $\beta_{1}\neq\infty$ and $\beta_{2}\neq\infty$, then the result of Corollary \ref{t5.8} holds provided that an additional eigenvalue is added to the original condition. 
\end{remark}

\bigskip

\noindent {\bf Acknowledgments.}
The authors are grateful to Professor Duan Yongjiang and Professor Li Yufei for their helpful comments and suggestions, which improved and strengthened the presentation of this paper.
The author Xu was supported by the National Natural Science Foundation of China (12501216) and the Natural Science Foundation of the Jiangsu Province of China (BK20241437).

\vskip 0.5cm
\noindent {\bf Data availability.}
No data was used for the research described in the article.

\vskip 0.5cm
\noindent {\bf Conflict of Interest Statement.} 
The authors report no conflict of interest.

\end{document}